\newcommand{\NN}{\mathbb  N}
\newcommand{\Z}{\mathbb  Z}
\numberwithin{equation}{section}
\newtheorem{theorem}[equation]{Theorem}
\newtheorem{definition}[equation]{Definition}
\newtheorem{proposition}[equation]{Proposition}
\newtheorem{lemma}[equation]{Lemma}
\newtheorem{remark}[equation]{Remark}
\newcommand{\cic}{\bm}
\DeclareMathOperator*{\Expectation}{\scalerel*{\mathbb{E}}{\textstyle\sum}}
\def\d{{\rm d}}
\begin{document}
\title[Prime Wiener-Wintner Theorem]{The Wiener Wintner Theorem Along the Primes}

\author{Jan Fornal}
\address{
Department of Mathematics,
University of Bristol \\
Beacon House, Queens Rd, Bristol BS8 1QU}
\email{nc24166@bristol.ac.uk}

\author{Anastasios Fragkos}
\address{
Department of Mathematics,
Georgia Tech \\
686 Cherry Street, Atlanta, GA 30332-0160 USA}
\email{anastasiosfragkos@gatech.edu}

\author{Ben Krause}
\address{
Department of Mathematics,
University of Bristol \\
Beacon House, Queens Rd, Bristol BS8 1QU}
\email{ben.krause@bristol.ac.uk}

\author{Michael Lacey}
\address{
Department of Mathematics,
Georgia Tech \\
686 Cherry Street, Atlanta, GA 30332-0160 USA}
\email{lacey@math.gatech.edu}

\author{Hamed Mousavi}
\address{
Department of Mathematics,
University of Bristol \\
Beacon House, Queens Rd, Bristol BS8 1QU}
\email{moosavi.hamed69@gmail.com}

\author{Yu-Chen Sun}
\address{
Department of Mathematics,
University of Bristol \\
Beacon House, Queens Rd, Bristol BS8 1QU}
\email{yuchensun93@163.com}

\date{\today}

\begin{abstract}
   We prove the following Wiener-Wintner Theorem along the sequence of prime times, the first extension of the Wiener-Wintner Theorem to arithmetic sequences: for every probability space, $(X,\nu)$, equipped with a measure-preserving transformation, $T : X \to X$, and every $f \in L^p(X), \ 1< p\leq \infty$, there exists a set of full probability, $X_f \subset X$ with $\nu(X_f) =1$, so that for all $\omega \in X_f$,
    \begin{align}
        \frac{1}{N} \sum_{n \leq N} e^{ 2\pi i p_n \theta} f(T^{p_n} \omega)
    \end{align}
    converges for \emph{all} $\theta \in [0,1]$; above, $\{ 2=p_1 < p_2 < \dots \}$ are an enumeration of the primes. 
    
    Our proof lives at the interface of classical Fourier analysis, combinatorial number theory, higher order Fourier analysis, and pointwise ergodic theory, with $U^3$ theory playing an important role; our $U^3$-estimates for \emph{Heath-Brown} models of the von Mangoldt function may be of independent interest.
\end{abstract}
\maketitle 
 \setcounter{tocdepth}{1}
\tableofcontents

\section{Introduction}

By a \emph{measure-preserving system}, $(X,\nu,T)$, we mean a probability space
$ (X,\nu) $
equipped with  transformation
 $T \colon X \to X$ 
which preserves measure, 
\[ \nu(T^{-1} E) = \nu(E) \text{ for all } E \subset X \text{ measurable}.\]
The Wiener-Wintner ergodic theorem \cite{WW} is a classical generalization of Birkhoff's Theorem \cite{Birkhoff1931}.

\begin{theorem}[Wiener-Wintner Ergodic Theorem]\label{t:WW}
Let $(X,\nu,T)$ be a measure-preserving system, and let $f \in L^1(X)$ be arbitrary. Then there exists a subset $X_f \subset X$ with $\mu(X_f) = 1$ so that for all $x \in X_f$
\[ \lim_{N\to\infty} \frac{1}{N} \sum_{n \leq N} \phi( n ) f(T^n x)
\]
exists for all continuous $1$-periodic $ \phi \colon \mathbb R\to\mathbb C $.
\end{theorem}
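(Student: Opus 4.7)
The plan is to reduce Theorem \ref{t:WW} to the characters $\phi(n) = e(n\theta)$ for $\theta \in \RR$ via Fej\'er--Weierstrass approximation: since finite trigonometric polynomials are uniformly dense in continuous $1$-periodic functions, it suffices to produce a single exceptional set $X_f$ on which
\[
A_N^\theta f(x) := \frac{1}{N}\sum_{n \leq N} e(n\theta)\, f(T^n x)
\]
converges as $N \to \infty$ for \emph{every} $\theta \in \RR$. A standard density argument, controlled by the Birkhoff maximal bound $\sup_{N,\theta}|A_N^\theta f|(x) \leq \sup_N \frac{1}{N}\sum_{n \leq N} |f|(T^n x)$, reduces matters to $f \in L^2(X)$. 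Orthogonally decompose $L^2(X) = \mathcal{K} \oplus \mathcal{K}^\perp$, where $\mathcal{K}$ is the Kronecker factor---the closed linear span of the eigenfunctions of the Koopman operator $f \mapsto f \circ T$.

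For $f \in \mathcal{K}$, I would approximate in $L^2$ by a finite sum $\sum_j c_j f_j$ with $T f_j = e(\alpha_j) f_j$. Then
\[
A_N^\theta\Big(\sum_j c_j f_j\Big)(x) = \sum_j c_j\, f_j(x)\cdot \frac{1}{N}\sum_{n \leq N} e\bigl(n(\theta+\alpha_j)\bigr),
\]
and the inner geometric sum converges for every $\theta$ to $\mathbf{1}\{\theta + \alpha_j \in \Z\}$. Fixing once and for all a countable family dense in $\mathcal{K}$ and taking the union of the null exceptional sets on which some $f_j$ fails to be well-defined pointwise yields a $\theta$-independent null set good for the Kronecker piece.

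For $f \in \mathcal{K}^\perp$, I would aim for the stronger statement $\lim_N \sup_\theta |A_N^\theta f(x)| = 0$ almost surely. The key device is the Van der Corput inequality applied to $a_n := e(n\theta)\,f(T^n x)$: since $a_n \overline{a_{n+h}} = e(-h\theta)\, f(T^n x)\,\overline{f(T^{n+h} x)}$, passing to absolute values yields
\[
|A_N^\theta f(x)|^2 \lesssim \frac{\|f\|_\infty^2}{H} + \frac{1}{H}\sum_{h=1}^H \bigg|\frac{1}{N}\sum_{n \leq N-h} f(T^n x)\,\overline{f(T^{n+h} x)}\bigg|,
\]
with all $\theta$-dependence absorbed into absolute values. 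Birkhoff's theorem applied to each function $f \cdot \overline{f \circ T^h}$ then gives, on a single null exceptional set (a countable union over $h \in \NN$), pointwise convergence of the inner averages to $\int f \cdot \overline{f \circ T^h}\,d\nu$. The spectral characterization of $\mathcal{K}^\perp$ forces these correlations to have vanishing Ces\`aro mean in $h$, so sending $N \to \infty$ and then $H \to \infty$ drives the right-hand side to zero.

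The principal obstacle, and what makes the theorem nontrivial beyond Birkhoff, is that the exceptional set must be universal in $\theta \in \RR$, an uncountable parameter. The Van der Corput step is what resolves this: it produces a bound whose right-hand side does not see $\theta$, so that the $\theta$-uniformity is built in for free, and one is reduced to a countable family of Birkhoff averages of autocorrelations whose exceptional sets harmlessly union with the countable family from the Kronecker step to yield the desired $X_f$.
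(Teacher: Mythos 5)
Your argument is the standard proof of the Wiener--Wintner theorem, and the paper does not actually prove Theorem \ref{t:WW}: it cites it as classical and only sketches the same reduction you give (Weierstrass to exponentials, then the Kronecker/weakly-mixing dichotomy, with the van der Corput mechanism underlying the uniform statement \eqref{e;uniformWW} that the paper takes as known input for its prime analogue). So the approach matches, and the structure is sound: the $\theta$-uniformity really is won exactly where you say, by the van der Corput inequality erasing the modulation, and Wiener's lemma applied to the continuous spectral measure of $f\in\mathcal K^\perp$ gives the vanishing Ces\`aro means of $|\langle f, T^hf\rangle|$.

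Two small repairs are needed. First, after reducing to $f\in L^2$ you invoke $\|f\|_\infty$ in the van der Corput bound; either first reduce by density to bounded $f$ (the Kronecker projection is a conditional expectation onto a factor, so it preserves $L^\infty$), or replace $\|f\|_\infty^2/H$ by $H^{-1}\cdot\frac1N\sum_{n\le N}|f|^2(T^nx)$, which Birkhoff controls. Second, and more substantively, for a non-ergodic system Birkhoff gives convergence of $\frac1N\sum_{n\le N} f(T^nx)\overline{f(T^{n+h}x)}$ to the conditional expectation $\Expectation\bigl(f\,\overline{f\circ T^h}\mid \mathcal I\bigr)(x)$, not to $\int f\,\overline{f\circ T^h}\,d\nu$, and a function orthogonal to the global Kronecker factor need not be orthogonal to the Kronecker factor of each ergodic component. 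The standard fix, which the paper itself uses for its main theorem, is to invoke the ergodic decomposition at the outset and run the whole argument on a.e.\ ergodic component; with that reduction in place your proof closes.
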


By the Weierstrass Approximation Theorem, it suffices to prove the statement above for the exponential functions $ \theta \to e^{2\pi i k\theta}$, for integers $k$.  
With these functions in mind, the weaker statement that the limit holds for almost every $\theta $ follows from Birkhoff's Ergodic Theorem applied to 
the system $X$ times the circle group.  
Thus, the Theorem above is strictly stronger than Birkhoff's Theorem.

\smallskip 
The purpose of this paper is to extend Theorem \ref{t:WW} to averages over the prime integers. 

\begin{theorem}\label{t:primes}
Let $\mathbb{P} \coloneqq  \{ p_1 = 2 < p_2 < \dots \}$ the prime integers listed in increasing order. 
For any measure preserving system 
 $(X,\nu,T)$ and any $f \in L^\infty(X) $,  
there is an $X_f\subset X$ of full measure so that for all 
$x\in X_f$, these two conclusions hold: 
    \begin{align} \label{e;primeWW}
      \lim_{N\to \infty}  \frac{1}{N} \sum_{n \leq N} \phi(p_n) f(T^{p_n} x) \qquad \textup{exists}  
    \end{align}
    for all continuous $1$-periodic $ \phi \colon \mathbb R \to\mathbb C $. 
\end{theorem}

Concerning the restriction that $f\in L^\infty$, 
for the Wiener-Wintner statement, it can be relaxed to $f\in L^p(X)$, for $p>1$. But not $p=1$ due to \cite{MR2788358}, with further work in this direction in \cite{MR4242902}.  

\medskip 
Many extensions of the Wiener-Wintner Theorem are known, 
  with the exponential term being replaced by exponentials evaluated along  polynomials \cite{MR1257033}, nilsequences \cite{MR2544760}, Hardy field functions \cite{2944094}, etc.; see \cite{MR1995517} for a fuller discussion.

\bigskip 

Our proof of the main Theorems invokes
approximations of the von Mangoldt function, 
 higher order Fourier analysis, and structural aspects of measure preserving systems.   
Among the different possible approximations to the von Mangoldt function 
$\Lambda$ now available, we prefer the Heath-Brown approximate, or model $\Lambda _{\textup{HB}}$. 
The latter is given by the fixed complexity term 
\begin{align}\label{e:lambdaQ}
    \Lambda_{Q}(n) &\coloneqq  \sum_{Q/2 < q \leq Q} \frac{\mu(q)}{\phi(q)} c_q(n),
\\ 
\intertext{where $c_q(n)=\sum_{(a,q)=1}e^{2\pi i a/q}$ is the Ramanujan function, $\mu (q)$ is the M\"obius function, and $\phi (q) $ is the totient function. 
And, we use the sum up to a given complexity level,} 
\label{e:lambda000}
\Lambda_{\leq T} &\coloneqq  \sum_{Q \leq T} \Lambda_Q,
\end{align}
where the sum runs over dyadic $Q$.  
This was introduced by Heath-Brown \cite{MR834356}*{pg. 47} in order to `copy $\Lambda (n)$ in its distribution over arithmetic progressions.' 
A first crucial estimate is to show that $\Lambda_{\leq T}$ does much more. It is close to $\Lambda $ in the $U^3$ norm. 
\begin{align}
\lVert  \Lambda - \Lambda _{\leq\textup{exp}( (\log N) ^{1/10})} \rVert_ {U^3[N]} \lesssim (\log N) ^{-A},  \qquad A>10.
\end{align}
The norm of the left is the normalized Gowers $U^3$ norm, restricted to the integers $[N]=\{1,2, \ldots, N\}$, which is the  widely used higher order Fourier analysis norm. 
This is a sophisticated bound, that compares to similar estimates for the Cram\'er model of the von Mangoldt function \cite{MR4875606}.  
It begins by adding a possible Siegel zero correction  to $\Lambda_{\textup{HB}}$ to define $\Lambda_{\textup{HB}}'$. 
The proof is based upon inverse $U^3$ Theorems. 
The latter arose out of work of Sanders \cite{MR2994508}, 
with further elaboration by Green and Tao \cite{MR2651575}.
 We appeal to  the recent formulation  of the $U^3$ inverse theorem due  J. Leng \cite{leng2023improvedquadratic}. 
 The structure of the von Mangoldt function and its approximate allow us to see that 
 the estimate above failing leads to a contradiction.

The next crucial estimate concerns $\Lambda_Q$ defined in \eqref{e:lambda000}. 
It is that part of the Heath-Brown model with the complexity of the rationals held fixed.  
There is a remarkably small estimate of its $U^3$ norm: 
\begin{align}
\lVert   \Lambda_{Q} \rVert _{U^3[N]} \lesssim Q ^{-3/8+o(1)}, 
\qquad N > Q ^{20}. 
\end{align}
This is established in Proposition \ref {p:Jan}, and follows from a sequence of elementary, but not obvious, observations. 
It is noteworthy that no such fixed complexity bound is possible for e.g.{} the Cramer approximate to the von Mangoldt function. 
Also noteworthy, 
 the weaker bound  on the $U^3$ norm of at most $Q ^{-1/4} $ is easily accessible.
 Yet, for our proof an improvement beyond $1/4$ is essential. 
 The $3/8 -o(1)$ bound follows from a subtle additional observation.

 With these ingredients, the additional facts needed are two fold.  We need Gowers norms inequalities that relate Wiener-Wintner  averages, formed with respect to some weight, in terms the $U^3$ norm of the weight. 
 These are easily accessible, though not well represented in the literature.  
 The second fact concerns the structure of dynamical systems. There is a natural `uniform' formulation of the usual Wiener-Winter that is important for us. 
 Fix an ergodic system $(X,\mu , T)$, and $f\in L^\infty (X)$ that is weakly mixing. That is, we have 
 \begin{equation}
  \label{e;weaklyMixing} 
  \lim_{N\to\infty} 
 \frac{1}{N}  \sum_{n=1}^N 
  \lvert \langle f, T^n f \rangle \rvert=0. 
  \end{equation}
   Then, there is a   full measure set $X_f \subset X$  so that 
   for all $x\in X_f$, we have 
    \begin{align} \label{e;uniformWW}
  \lim_N \frac{1}{N} 
  \sup_\theta \Bigl\lvert 
  \sum_{n \leq N} e^{2\pi i n \theta} f(T^n x)
  \Bigr\rvert =0. 
 \end{align}
 This in fact was a key input to Bourgain's Double Recurrence Theorem \cite{MR1037434}.
%

\subsection{Acknowledgements}

We are grateful to James Leng for generously explaining some of the higher order Fourier analysis arguments.

\subsection{Notation}\label{ss:not}
We use $e(t) \coloneqq  e^{2 \pi i t} $, with $t\in \mathbb{R} $ 
throughout to denote the complex exponential. 
Let $\mu $ denote the M\"{o}bius. Namely 
\begin{align}
\mu (n) = 
\begin{cases}
1 & n=1 
\\ 
(-1) ^{k} & \textup{$n$ is divisible by $k$ distinct primes} 
\\ 
0  & \textup{$n$ is divisible by a square integer } >1 
\end{cases}
\end{align}
And $\phi (n) = \lvert \{ 1\leq a < q \colon (a,q)=1\} \rvert$ be the  totient functions.  And, the Ramanujan sum is 
\begin{align}
    c_q(n) \coloneqq \sum_{(a,q)=1} e(- an/q). 
\end{align}
 The set of primes is denoted by  $ \mathbb{P} \coloneqq  \{2<3< \dots \}$, and the von Mangoldt function is 
\begin{align}
    \Lambda(n) \coloneqq  \begin{cases} \log p & \text{ if } n = p^\alpha \text{ is a power of a prime}  \\
    0 & \text{ otherwise} \end{cases}. 
\end{align}

The initial intervals of natural numbers are denoted by $[N]=\{1,2, \ldots, N\}$. 
Averages of functions $f$ supported on $[N]$ are written as 
\begin{equation}
    \Expectation_{n\in[N]} f(n) =  \frac{1}{N} \sum_{n=1}^N f(n). 
\end{equation}

We will make use of the modified Vinogradov notation. We use $X \lesssim Y$ or $Y \gtrsim X$ to denote
the estimate $X \leq CY$ for an absolute constant $C$ and $X, Y \geq 0.$  If we need $C$ to depend on a
parameter, we shall indicate this by subscripts, thus for instance $X \lesssim_p Y$ denotes the estimate $X \leq C_p Y$ for some $C_p$ depending on $p$. We use $X \approx Y$ as shorthand for $Y \lesssim X \lesssim Y$. We use the notation $X \ll Y$ or $Y \gg X$ to denote that the implicit constant in the $\lesssim$ notation is extremely large, and analogously $X \ll_p Y$ and $Y \gg_p X$.

We also make use of big-Oh and little-Oh notation: we let $O(Y)$  denote a quantity that is $\lesssim Y$ , and similarly
$O_p(Y )$ will denote a quantity that is $\lesssim_p Y$; we let $o_{t \to a}(Y)$
denote a quantity whose quotient with $Y$ tends to zero as $t \to a$ (possibly $\infty$), and
$o_{t \to a;p}(Y)$
denote a quantity whose quotient with $Y$ tends to zero as $t \to a$ at a rate depending on $p$.

\subsection{Gowers Norms} 
\label{sub:gowers_norms}
Let $f \colon \mathbb{Z} \to \mathbb{C}  $ be a finitely supported function on the integers. Set the conjugation-difference operator to be 
\begin{align}
  \triangle_h f(x) \coloneqq  f(x) \overline{f(x+h)}, 
  \qquad x,h\in \mathbb{Z} . 
\end{align}
The basic fact here is 
\begin{align} \label{e;double}
\Bigl\lvert \sum_x f(x)\Bigr\rvert ^2 
= 
\sum_{x,h} \triangle_h f(x). 
\end{align}
The higher order conjugation-difference operator is inductively defined to be 
\begin{align}
   \triangle_{h_1,\dots,h_s}f(x) \coloneqq  \triangle_{h_s}(\triangle_{h_1,\dots,h_{s-1}} f)(x), 
    \qquad x,h_1, \ldots, h_s\in \mathbb{Z} . 
\end{align}
Then, the $s$th order Gowers norm is 
\begin{align} \label{e;GowersDef}
    \| f \|_{U^s(\mathbb{Z})}^{2^s} \coloneqq  
    \sum_{x,h_1,\dots,h_s} \triangle_{h_1,\dots,h_s}f(x). 
\end{align}
For $s=1$, this is a semi-norm, while higher orders are norms. In particular, for $s=2$, we have 
\begin{align}  \label{e;U2-ell4}
\lVert f \rVert_{U^2(\mathbb{Z} )} ^{4} 
= \int_{\mathbb{T} } \lvert  \widehat f (\theta )\rvert ^{4}\;d \theta , 
\end{align}
where $\widehat f (\theta ) = \sum_x f(x)e(-\theta x)$ is the Fourier transform of $f$.  
Moreover, we have the inductive relationship between norms given by 
\begin{gather}
\| f \|_{U^{s+1}(\mathbb{Z})}^{2^{s+1}} = \sum_{h_1,\dots,h_{s-1}} \| \triangle_{h_1,\dots,h_{s-1}} f \|_{U^2(\mathbb{Z})}^4,
\\\intertext{so in particular}
\label{e:U2U3}
    \| f \|_{U^3(\mathbb{Z})}^{8} = \sum_{h} \| \triangle_{h} f \|_{U^2(\mathbb{Z})}^4.
\end{gather}

We recall the following fundamental inequality for Gowers norms:
\begin{lemma}[Gowers-Cauchy-Schwarz Inequality]\label{CauchySchwarzGowers}
	For $(f_\omega)_{\omega \in \{0, 1\}^s} \colon G \to \mathbb{C}^{\{0, 1\}^s}$, we define 
	$$\langle (f_\omega)_{\omega \in \{0, 1\}^s} \rangle_{U^s(\Z)} := \sum_{n, h_1, \ldots h_s \in \Z} \prod_{\omega \in \{0, 1\}^s} C^{|\omega|}f(x + \omega \cdot h),$$ where $Cg:=\overline{g}$ is complex conjugation. Then
	$$|\langle (f_\omega)_{\omega \in \{0, 1\}^s} \rangle_{U^s(\Z)} | \le \prod_{\omega \in \{0, 1\}^s} \|f_\omega\|_{U^s(\Z)}.$$ \end{lemma}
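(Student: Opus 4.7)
The plan is to proceed by iterated Cauchy--Schwarz, unfolding one coordinate direction of the cube $\{0,1\}^s$ at a time; this standard argument in fact establishes the raised form $|\langle (f_\omega)\rangle_{U^s(\Z)}|^{2^s} \le \prod_{\omega \in \{0,1\}^s}\|f_\omega\|_{U^s(\Z)}^{2^s}$, from which the lemma follows by extracting a $2^s$-th root. For the base case $s=1$, after the change of variables $m = n+h_1$ the bilinear form $\sum_{n,h_1} f_0(n)\overline{f_1(n+h_1)}$ factorizes as $\bigl(\sum_n f_0(n)\bigr)\bigl(\sum_m \overline{f_1(m)}\bigr)$, and since $\|f\|_{U^1(\Z)} = |\sum_n f(n)|$ the inequality is already an equality.

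For the inductive step I split $\omega = (\omega', \omega_s) \in \{0,1\}^{s-1}\times\{0,1\}$ and observe that only the $\omega_s = 1$ factors of the product depend on $h_s$. Changing variables $m = n+h_s$ in those factors decouples them from $n$, so that
\begin{align*}
\langle (f_\omega)\rangle_{U^s(\Z)} \;=\; \sum_{h' \in \Z^{s-1}} X_{h'}\,Y_{h'},
\end{align*}
where $X_{h'} = \sum_n \prod_{\omega'} C^{|\omega'|} f_{(\omega',0)}(n+\omega'\cdot h')$ and $Y_{h'} = \sum_m \prod_{\omega'} C^{|\omega'|+1} f_{(\omega',1)}(m+\omega'\cdot h')$. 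Applying Cauchy--Schwarz in $h'$ gives $|\langle\cdot\rangle|^2 \le \bigl(\sum_{h'}|X_{h'}|^2\bigr)\bigl(\sum_{h'}|Y_{h'}|^2\bigr)$. Expanding each squared modulus with a fresh dummy $h_s'$, one verifies directly that $\sum_{h'}|X_{h'}|^2$ is itself a Gowers form $\langle(g_\omega)\rangle_{U^s(\Z)}$ in which every slot $\omega$ has been filled with the single function $f_{(\omega',0)}$, and analogously $\sum_{h'}|Y_{h'}|^2$ is a Gowers form in which every slot is filled with $f_{(\omega',1)}$.

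Iterating this unfolding through each of the remaining $s-1$ coordinate directions yields, after $s$ total applications of Cauchy--Schwarz, $2^s$ terminal factors indexed by $\omega^* \in \{0,1\}^s$; the factor for $\omega^*$ is a Gowers form all of whose $2^s$ slots are filled with the single function $f_{\omega^*}$, which by definition \eqref{e;GowersDef} equals $\|f_{\omega^*}\|_{U^s(\Z)}^{2^s}$. Multiplying the resulting bounds and taking a $2^s$-th root delivers the lemma. The only piece of actual bookkeeping is tracking the conjugation pattern $C^{|\omega|}$ through each doubling step, but this is automatic: doubling along direction $j$ toggles $\omega_j$ while introducing exactly one extra conjugation, in lockstep with the parity $|\omega|$. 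I expect no deeper obstacle beyond writing the first doubling carefully enough that the iteration can be applied uniformly in the remaining directions.
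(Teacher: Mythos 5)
Your argument is correct: it is the standard iterated Cauchy--Schwarz proof of the Gowers--Cauchy--Schwarz inequality, and the key computation --- that $\sum_{h'}|X_{h'}|^2$, after introducing a fresh variable $h_s = m-n$, is again a full $U^s$ Gowers form with both slots $(\omega',0)$ and $(\omega',1)$ occupied by $f_{(\omega',0)}$, with the conjugation pattern $C^{|\omega'|+1}=C^{|(\omega',1)|}$ coming out right automatically --- is exactly what makes the iteration close. The paper states this lemma as a recalled fundamental fact and gives no proof, so there is nothing to compare against; your write-up supplies the omitted standard argument.

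One small framing point: this is not an induction on $s$. After one application of Cauchy--Schwarz in the direction $\omega_s$ the resulting forms are still $U^s$ forms (with $s$ difference parameters), only with fewer distinct functions; the induction is really on the number of coordinate directions that have been symmetrized. Your third paragraph describes the iteration correctly, so this is only a matter of wording in the "base case / inductive step" set-up, not a gap. The base case $s=1$ verification is harmless but not needed for the iteration as you actually run it.
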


For integers $H$, let $[H]=\{0,1, \ldots, h-1\}$. 
Denote the usual expectation by 
\begin{align}
 \Expectation_H f(h) =    \Expectation_{h \in [H]} \, f(h) \coloneqq  \frac{1}{H} \sum_{a \in [H]} f(a)
\end{align}
We then define normalized $U^3$ norms as follows. For $f \colon [N] \to \mathbb C$, 
set 
\begin{equation}
    \lVert f \rVert_{U^3[N] } ^8 \coloneqq  
    \frac{\| f \|_{U^3(\mathbb{Z})}^{8} }{\lVert \mathbf{1}_{[N]} \rVert_{U^3(\mathbb Z) } ^8}.  
\end{equation}
Note that  $\lVert \mathbf{1}_{[N]} \rVert_{U^3(\mathbb Z) } ^8 \simeq N^4$. We use normalized $\ell^p$ norms,
\begin{align}
    \| f \|_{L^p([N])}^p := \Expectation_{[N]} |f(n)|^p,
\end{align}
and for functions defined on a metric space $(X,d)$, we define the Lipschitz constant
\begin{align}\label{e:lip}
    \| F \|_{\text{Lip}} := \sup_{x \neq y} \frac{|F(x) - F(y)|}{d(x,y)}.
\end{align}
These are the general inequalities for the Gowers norms that we need.

\begin{proposition} 
Fix an integer $N$, and let $w \colon [N]\to \mathbb{C}$ be a weight, 
and $f \colon [N] \to \mathbb{C}$ a function bounded by one. 
The three inequalities below hold. 
\begin{gather} \label{e;U2control}
\Expectation_{x\in[2N]} \Bigl\lvert \Expectation_{N} f(x-n) w(n)    \Bigr\rvert ^2  
\lesssim   \lVert w \rVert _{U^2[N]} ^{2} , 
\\ 
\intertext{for $\theta  \colon [N] \to [0,1]$,}
  \label{e;U3control}
\Expectation_{x\in[2N]} \Bigl\lvert
 \Expectation_N w(n)  f(x-n)  e(\theta (x)n)  \Bigr\rvert^{4}
\lesssim  
 \lVert w \rVert_{U^3[N]} ^4  
\\   
\end{gather}
\end{proposition}
The first is well known. The second is directly relevant to the Wiener-Wintner Theorem,  
by choosing $\theta(x)$ to the value of $\theta $ that gives the largest average. 
For us, it is important that the fourth power appears above.

\begin{proof}
The first is well-known, and recorded for clarity.  
We have estimating the $L^2$ norm of a convolution, so by Plancherel, Cauchy-Schwarz and \eqref{e;U2-ell4}, 
\begin{align}
\Expectation_{x\in [2N]} 
\Bigl\lvert \Expectation_N f(x-n) w (n) \Bigr\rvert ^2  
& \leq  \frac1{2 N^3}
\sum_x \Bigl\lvert \sum_n f(x-n) w (n) \Bigr\rvert ^2 
\\ & 
\leq \frac1{2 N^3} 
\int _{\mathbb{T} } \lvert  \widehat f (\beta ) 
\widehat w (\beta ) \rvert ^2  \;d \beta 
\\ 
& \leq \frac1{2N^3} \lVert  \widehat f \rVert_4 ^2 
\lVert  \widehat w \rVert_4 ^2 
\\ 
&\lesssim  \frac1{N^{3/2}} \lVert \widehat f \rVert_4 ^2  \lVert w \rVert _{U^2[N]} ^{2}  
\\ 
& \lesssim  \lVert w \rVert _{U^2[N]} ^{2}   . 
\end{align}
The last step uses the Hausdorff-Young  inequality to bound 
the $\ell^4$ norm of  $\widehat f$, as $f$ is bounded by $1$.  

\smallskip 
For \eqref{e;U3control},  
use \eqref{e;double} to expand one square.  
\begin{align}
\Expectation_{x\in [2N]} 
\Bigl\lvert  &\Expectation_N w (n) f(x-n)  e(\theta(x)n) \Bigr\rvert ^4
\\& = 
\Expectation_{x\in [2N]} 
\Bigl\lvert \Expectation_{h\in [-N,N]} \Expectation_N 
 \triangle  _h (w (n)   f(x-n)  )
\triangle  _h e(\theta (x)n) \Bigr\rvert ^2
\\ 
& \leq 
\Expectation_{x\in [N]} \Expectation_{h\in [-N,N]}
\Bigl\lvert  \Expectation_N 
\triangle  _h w (n)  \cdot \triangle  _h f(x-n)    \Bigr\rvert ^2 .
\end{align}
The second step follows as $\triangle  _h e(\theta (x)n)
= e(\theta (x) h)$ is not a function of $n$.  
But, then we are free to use our first inequality. 
For each $h$, $\triangle  _h f $ is bounded by $1$, hence 
\begin{align}
\Expectation_{x\in [2N]} \Expectation_{h\in [-N,N]}
\Bigl\lvert  \Expectation_N 
\triangle  _h w (n) 
\triangle  _h f(x-n)       \Bigr\rvert ^2
\lesssim   
\Expectation_{h\in [-N,N]}
\lVert \triangle  _h w \rVert_{U ^{2}[N]} ^{2} 
\lesssim  \lVert w \rVert _{U ^{3}[N]} ^{4}. 
\end{align}

\end{proof}

 


\section{$U^3$ Norms of the Heath-Brown Model, with Fixed Complexity }\label{s:Jan}

In this section we establish our main combinatorial result, concerning that part of the Heath-Brown model, in which the complexity of the rationals is held fixed. Recall the definition of $\Lambda _Q$ in \eqref{e:lambdaQ}. 
\begin{proposition}\label{p:Jan}
	Suppose that $M \geq Q^{20}$. Then for any $\epsilon >0$
	\begin{align}\label{e:LambdaQ}
		\| \Lambda_Q \|_{U^3([M])} \coloneqq  
		\Bigl\| \sum_{ Q/2\leq  q  < Q, } \frac{\mu(q)}{\phi(q)} c_q \Bigr\|_{U^3[M]} \lesssim_\epsilon Q^{\epsilon-  3/8}.
	\end{align}
\end{proposition}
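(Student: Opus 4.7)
The plan is to expand $\|\Lambda_Q\|_{U^3[M]}^{8}$ via a Fourier expansion of $\Lambda_Q$ and reduce to a counting problem for affine $3$-cubes in a Farey-type set. Writing $c_q(n) = \sum_{(a,q)=1} e(-an/q)$, we have the exponential expansion
\[
\Lambda_Q(n) = \sum_{\xi \in \Xi_Q} c(\xi)\, e(-\xi n), \qquad \Xi_Q := \{a/q \bmod 1 : Q/2 < q \leq Q,\ (a,q)=1\}, \qquad c(a/q) := \mu(q)/\phi(q).
\]
Substituting into the definition \eqref{e;GowersDef} of the Gowers norm yields an $8$-fold sum over tuples $(\xi_\omega)_{\omega \in \{0,1\}^3} \in \Xi_Q^8$, in which the sums over $x$ and $h_1, h_2, h_3$ produce orthogonality kernels that impose the linear conditions
\[
\sum_\omega \sigma_\omega \xi_\omega \equiv 0, \qquad \sum_{\omega_i = 1} \sigma_\omega \xi_\omega \equiv 0 \pmod 1 \quad (i=1,2,3), \qquad \sigma_\omega := (-1)^{|\omega|}.
\]

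Since denominators in $\Xi_Q$ are at most $Q$, the relevant linear combinations have denominator at most $Q^8$; as $M \geq Q^{20}$, the off-resonance contributions are $O(Q^{-c})$ for a large $c$ via $\min(M, 1/\|\cdot\|)$ bounds on geometric sums, and hence negligible. The on-resonance conditions are solved precisely by the \emph{affine $3$-cubes}: there exist $\alpha, \beta_1, \beta_2, \beta_3 \in \mathbb R / \mathbb Z$ with $\xi_\omega \equiv \alpha + \omega_1 \beta_1 + \omega_2 \beta_2 + \omega_3 \beta_3 \pmod 1$. Taking absolute values,
\[
\|\Lambda_Q\|_{U^3[M]}^{8} \lesssim \sum_{(\xi_\omega)\ \text{affine $3$-cube in }\Xi_Q} \prod_\omega \frac{1}{\phi(q_\omega)},
\]
and since $1/\phi(q_\omega) \lesssim Q^{-1}$, the task reduces to showing $N_{\mathrm{cube}}(\Xi_Q) \lesssim Q^{5+\varepsilon}$.

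The counting is organized by the pattern of denominators $(q_\omega)$. \emph{(a) Diagonal case:} all $q_\omega$ equal to a common $q \in (Q/2, Q]$; the affine cube relation then becomes a linear relation on the eight numerators modulo $q$, giving at most $\phi(q)^4 \leq Q^4$ configurations per $q$ and hence $\lesssim Q^5$ in total. \emph{(b) Off-diagonal case:} not all $q_\omega$ equal. Writing e.g.\ $\xi_{e_1+e_2} \equiv \xi_{e_1} + \xi_{e_2} - \xi_{\mathbf 0} \pmod 1$, the four ``determined'' corners have naive denominator $\approx q_0 q_i q_j \lesssim Q^3$; requiring each of them to reduce to a denominator in the narrow window $(Q/2, Q]$ forces heavy divisibility constraints among the free denominators, typically collapsing the configuration into case (a) or close variants. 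A careful case breakdown over equality patterns among the $q_\omega$'s, together with divisor-bound estimates, controls this case also by $Q^{5+\varepsilon}$.

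The main obstacle is step (b). The naive approach, treating the four determined-corner constraints independently and each costing a factor $Q^{-1/2}$, yields only $N_{\mathrm{cube}} \lesssim Q^6$, equivalent to the weaker bound $\|\Lambda_Q\|_{U^3[M]} \lesssim Q^{-1/4}$ mentioned in the introduction. The improvement to $3/8$ requires the \emph{subtle additional observation} alluded to in the introduction: the four denominator constraints are strongly correlated because all eight corners of the cube must simultaneously sit in the dyadic window $(Q/2, Q]$, and this rigidity forces non-diagonal contributions to be bounded at the diagonal scale. Identifying and implementing this observation, and combining it with elementary but not obvious GCD/divisor-function bookkeeping, is where the technical content of the proposition lies.
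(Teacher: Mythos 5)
Your setup is sound and matches the paper's: expanding the eighth power of the Gowers norm, discarding the off-resonant tuples using $M\geq Q^{20}$, and observing that the resonance conditions $L_0,\dots,L_3\in\Z$ are exactly the statement that $\omega\mapsto \xi_\omega$ is affine on $\{0,1\}^3$, so that the problem becomes a weighted count of affine $3$-cubes in the Farey set $\Xi_Q$ with target $Q^{5+\epsilon}$. Your diagonal count ($\approx Q^5$, showing the target is sharp) is also correct. But there is a genuine gap, and you have located it yourself: the entire content of the proposition is the off-diagonal case, and your proposal does not prove it — it only asserts that "heavy divisibility constraints" should "typically collapse" such configurations, and then concedes that "identifying and implementing this observation \dots is where the technical content of the proposition lies." Moreover, the organizing dichotomy "all $q_\omega$ equal vs.\ not" is not the right one: genuinely non-diagonal cubes are plentiful (e.g.\ four primes $p_i\approx\sqrt{Q}$, each dividing the $q_\omega$ on one face of the cube, so that the eight denominators take four distinct values $p_ip_j\in(Q/2,Q]$), and they do not collapse to the diagonal; they must be counted.

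What the paper actually does at this point is work prime by prime rather than vertex by vertex. Writing $R=\prod_\omega q_\omega$ and, for each prime $p\mid R$, marking the vertices $\omega$ with $p\mid q_\omega$, the face conditions \eqref{cube-faces-linear-form-3} force each face of the cube to contain $0$ or $\geq 2$ marked vertices, hence every prime marks at least $4$ vertices ($\mathrm{rad}(R)^4\mid R$); this alone gives only the $Q^{-1/4}$ bound, because it bounds the admissible numerators mod $p$ by $(p-1)^{v_p(R)-3}$. The upgrade to $3/8$ is the combinatorial Lemma \ref{l:algorithm}: for a marked set $S_p$ of size $4$ one free value determines the rest, and for $5\leq|S_p|\leq 8$ only $|S_p|-4$ free values are needed, yielding the sharper count \eqref{counting-possible-numerators}, $\prod_{v_p(R)=4}(p-1)\prod_{v_p(R)\geq5}(p-1)^{v_p(R)-4}$. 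Feeding this into the sum over denominators, parametrized by $r=\mathrm{rad}(R)\approx Q^a$ and $s=R/r^4$ with $\mathrm{rad}(s)\gtrsim Q^{2-a}$, produces the geometric series summing to $Q^{-3+o(1)}$. Without this per-prime propagation argument (or an equivalent substitute), your outline does not yield any exponent beyond $-1/4$.
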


\begin{remark}
    The proof is entirely elementary, and goes beyond a dimension counting type bound.  A bound better than $Q ^{-1/4}$ is needed for the main results of this paper, and a similar approach yields the more general estimate:
    \begin{align}
        \| \Lambda_Q(m) \|_{U^s([M])} \lesssim_{\epsilon} Q^{\epsilon - s/2^s}
    \end{align}
    whenever $s \geq 2$ and $M \geq Q^{2^{s+1} + s + 1}$. The proof of this inequality will appear in the above-mentioned forthcoming work of the first and third authors.
\end{remark}

\begin{proof}
	Recall the definition of Gowers $U^3(\Z)$-norm  given in \eqref{e;GowersDef}, and the asymptotic $  \| \mathbf{1}_{[M]} \|^8_{U^3(\Z)} \approx M^4$.  
	The proposition reduces to the following:
	\begin{align} \label{e;fixedScaleToProve}
		\| \Lambda_Q \mathbf{1}_{[M]}\|_{U^3(\Z)}^8
		\lesssim_\epsilon   M^4 Q^{-3 + \epsilon  }, \qquad \epsilon   >0, \ M > Q^{20}. 
	\end{align}
	The expansion of this term, including that of the Ramanjuan sum $c_q$, leads to 
	tuples of integers  $(x, h_1, h_2, h_3) \in [M]$, 
	a selection of square free integers $ Q/2\leq q_\omega <Q$, for $\omega \in \{0,1\}^3$,  and integers $a_\omega $ with $(a_\omega , q_\omega )=1$. 
	These are the implicit assumptions for the remainder of the proof. 
	The Gowers norm is 
	\begin{align} 
		\| \Lambda_Q \mathbf{1}_{[M]} \|_{U^3(\mathbb{Z})} ^8 &= \sum_{\substack{ \mathbf{q}=(q_\omega)_{\omega \in \left\{0,1\right\}^3} \\ \mathbf{q}  \in \left[Q/2,Q\right)^{8} } } \sum_{\substack{(a_\omega)_{\omega \in \{0, 1\}^3} \\ (a_{\omega},q_{\omega})=1 }} \prod_{\alpha \in \{0,1\}^3}\frac{\mu(q_\alpha)}{\phi(q_\alpha)} 
		\\\label{expansion-u3-norm}
		& \qquad  \times \sum_{x,h_1,h_2,h_3} e(- x L_0((a_\omega/q_\omega)_{\omega \in \{0, 1\}^3 }))
		\\ &\qquad\times \prod_{i=1}^3 e(- h_i L_i((a_\omega/q_\omega)_{\omega \in \{0, 1\}^3 })) \cdot \mathbf{1}_{x + \alpha \cdot h \in [M]}. 
	\end{align}
	The $L_i$, for $i=0,1,2,3$, are linear functions of the $a_\omega /q_\omega $ are defined by 
	\begin{gather} \label{cube-linear-form}
		L_0((a_\omega/q_\omega)_{\omega \in \{0, 1\}^3 }) = \sum_{\omega \in \{ 0, 1 \}^3} (-1)^{|\omega|} \frac{a_\omega}{q_\omega},
		\\
		\intertext{and for $i \in \{1, 2, 3\}$:}
		\label{cube-faces-linear-form}
		L_i((a_\omega/q_\omega)_{\omega \in \{0, 1\}^3 }) = \sum_{\substack{\omega \in \{ 0, 1 \}^3 \\ \omega_i = 1}} (-1)^{|\omega|} \frac{a_\omega}{q_\omega}
	\end{gather} 
	
	We first show that a significant amount of ``diagonalization" occurs.  
	The sum in \eqref{expansion-u3-norm} is at most $O(M^3 Q^{16})$, 
	summing over those choices of   fractions 
	$({a_\omega}/{q_\omega})_{\omega \in \{0, 1\}^3}$ 
	where one of the linear functions 
	\[ (L_i((a_\omega/q_\omega)_{\omega \in \{0, 1\}^3 })), \; \; \; 0 \leq i \leq 3 \] 
	is noninteger. 
	Indeed, suppose for concreteness that $L_0$ is noninteger, the case of $L_1, L_2$ or $L_3$ can be dealt very similarly but are slightly simpler; one gets 
	\[ \|L_0((a_\omega/q_\omega)_{\omega \in \{0, 1\}^3 }) \|_{\mathbb{T}} \geq \frac{1}{Q^8}. \] 
	Since, for fixed $h_1, h_2, h_3 \in \Z$, the set of $x$ such that 
	\[ x + \sum_{i = 1}^3 \alpha_i h_i \in [M]\] 
	is an interval, the inner sum over $x$ of \eqref{expansion-u3-norm} is 
	just a geometric series, contributing $O(Q^8)$. The number of possible choices for $h_1, h_2, h_3$ is then $O(M^3)$, so eventually the contribution is of the order of most:
	\begin{align} \label{expansion-non-diagonal}
	\sum_{\substack{ \mathbf{q}=(q_\omega)_{\omega \in \left\{0,1\right\}^3} \\ \mathbf{q}  \in \left[Q/2,Q\right)^{8} } } \sum_{\substack{(a_\omega)_{\omega \in \{0, 1\}^3} \\ (a_{\omega},q_{\omega})=1 }} \prod_{\alpha \in \{0,1\}^3}\frac{1}{\phi(q_\alpha)} M^3 Q^8 
		\lesssim M^3 Q ^{16}. 
	\end{align}
	Above, summing all $a_\omega$ that are coprime to $q_\omega$ cancels the totient $\phi(q_\omega)$. This leaves the possible choice of $q_\omega $, which is at most  $Q^8$. 
	Recalling that we assume $M>Q^{20}$,  this term is smaller than \eqref{e;fixedScaleToProve}, the bound to establish.

	\smallskip 
	
	The remaining contribution to \eqref{expansion-u3-norm} is 
	obtained by adding the condition that each of the $L_j$ evaluate to an integer: 
	\begin{align}
		\sum_{\substack{(a_\omega,q_\omega)_{\omega \in \{0, 1\}^3} \\ L_j((a_\omega/q_\omega)_{\omega \in \{0, 1\}^3 }) \in \Z, \ 0 \leq  j \leq 3}} \prod_{\alpha \in \{0,1\}^3}\frac{\mu(q_\alpha)}{\phi(q_\alpha)} \sum_{x, h_1, h_2, h_3 \in \Z} \mathbf{1}_{x + \alpha \cdot \mathbf{h} \in [M]}. 
	\end{align}
	And, the last sum above is at most     
	$\|\mathbf{1}_{[M]}\|_{U^3(\Z)} \simeq M ^{4}$. 
	The claim of the proposition reduces to the following inequality:
	\begin{align} \label{expansion-u3-norm-2}
		\sum_{\substack{(a_\omega,q_\omega)_{\omega \in \{0, 1\}^3}  \\ L_j((a_\omega/q_\omega)_{\omega \in \{0, 1\}^3 }) \in \Z, \ 0 \leq  j \leq 3}} \prod_{\alpha \in \{0,1\}^3}\frac{\mu^2(q_\alpha)}{\phi(q_\alpha)} \lesssim_\epsilon   Q^{-3 + \epsilon  }, \qquad \epsilon >0. 
	\end{align}

	We identify  new divisibility constraints. 
	Fix, for the moment, the tuple of denominators $(q_\omega)_{\{0,1\}^3}$ and denote the product of the $q _ \omega $ by 
	\[ R = R\big( (q_\omega)_{\{0,1\}^3} \big) 
	= \prod _{\omega \in \{0,1\}^3} q_\omega . 
	\] 
	We establish momentarily that $ \text{rad}(R)^4 \mid  R$, where 
	$\text{rad}(R)$ is the radical of the number $R$, thus
	\begin{align}
		\text{rad}\Bigl(\prod_{i} p_i^{e_i}\Bigr) = \prod_i p_i, \; \; \; p_i \in \mathbb{P}, \ e_i \geq 1.
	\end{align}
	Also set $ \triangle _{(q_\omega)_{\{0,1\}^3}}$ to be  the number of tuples $(a_\omega)_{\{0,1\}^3}$ such that the function $L_j((a_\omega/q_\omega)_{\omega \in \{0, 1\}^3 }) \in \mathbb{Z}$ are satisfied for all $0 \leq j \leq 3$. We assert  that:
	\begin{align} \label{counting-possible-numerators}
	 \triangle _{(q_\omega)_{\{0,1\}^3}} \leq 
	\prod_{p :  v_p(R)=4} (p-1) \times \prod_{p \colon v_p(R) \geq 5} (p-1)^{v_p(R)-4}. 
	\end{align}
	Above, $v_p(R)$ is the highest power of $p$ that divides $R$, and we know that if $p\mid R$, then  $v_p(R)\geq 4$.

	Indeed, since  the linear forms evaluate to integers, namely 
	\eqref{cube-linear-form} and \eqref{cube-faces-linear-form} are integers, one concludes also that all of these sums are integers: 
	\begin{align} \label{cube-faces-linear-form-2}
		\sum_{\substack{\omega \in \{ 0, 1 \}^3 \\ \omega_i = j}} (-1)^{|\omega|} \frac{a_\omega}{q_\omega} \in \Z, 
		\qquad i \in \{1, 2, 3 \}, \ j \in \{0, 1 \}.
	\end{align}
	And, given the presence of the M\"{o}bius function, 
	each $q_\omega$ is square free. 
	And so, each fraction $\frac{a_\omega}{q_\omega}$ can be uniquely written as the sum:
	\begin{align}
		\frac{a_\omega}{q_\omega} \equiv \sum_{p \in \mathbb{P}} \frac{a_{\omega p}}{p} \pmod 1
	\end{align}
	where $a_{\omega p} \in \{0, \ldots, p-1 \}$ is nonzero if and only if $p | q_\omega$. The situation that \eqref{cube-faces-linear-form-2} is an integer happens if and only if for each prime $p$, $i \in \{1, 2, 3\}$ and $j \in \{0, 1\}$ one has:
	\begin{align} \label{cube-faces-linear-form-3}
		p  \mid \sum_{\substack{\omega \in \{ 0, 1 \}^3 \\ \omega_i = j}} (-1)^{|\omega|} a_{\omega p}.
	\end{align}
	We now combinatorially encode the above constraint on to the vertices of the cube $\{0,1\}^3$: 
	
	\smallskip

	Given a prime $p$, mark all vertices $\omega$  on the  cube 
	$\{0,1\}^3$ for which $a_{\omega p} \not \equiv 0 \pmod p$ (in other words $p | q_\omega$). 
	Notice that a face of the cube is specified by fixing one of the three  coordinates of $\omega $, which is the condition imposed in \eqref{cube-faces-linear-form-3}. 
	From the above equation one sees that for each face there is either zero or at least two marked vertices; this forces the number of vertices  marked by a prime, on the entire cube, to be at least four, as it is impossible to mark one, two, or three vertices of a cube without leaving  a face with only one marked vertex. 
	This means that for each prime $p$ dividing $R$, necessarily $p^4$ divides $R$. 
	That is, our first assertation that $\text{rad}(R)^4 \mid R$ holds.  This observation leads to the $Q ^{-1/4}$ bound, yet 
	we need to exceed this bound.

	The second assertation \eqref{counting-possible-numerators} 
	will supply the $Q ^{-3/8}$ bound.  Its verification requires a combinatorial analysis postponed until after the proof.  
	
	\medskip 
	
	It remains to provide the bound in \eqref{expansion-u3-norm-2}. 
	The left-hand side of \eqref{expansion-u3-norm-2} can be estimated by:
	\begin{align} \label{expansion-u3-norm-3}
		\sum_{\substack{(q_\omega)_{\omega \in \{0, 1\}^3}}} \triangle _{(q_\omega)_{\{0,1\}^3}} \prod_{\alpha \in \{0,1\}^3}\frac{\mu^2(q_\alpha)}{\phi(q_\alpha)}.
	\end{align}
	The product of fractions $\frac{\mu^2(q_\alpha)}{\phi(q_\alpha)}$ can be written purely in terms of $ R = R\big( (q_\alpha)_{\{0,1\}^3} \big):$
	\begin{align}
		\prod_{\alpha \in \{0,1\}^3}\frac{\mu^2(q_\alpha)}{\phi(q_\alpha)} = \prod_{p \mid R} \frac{1}{(p-1)^{v_p(R)}}
	\end{align}
	and, with the crucial estimate \eqref{counting-possible-numerators}, this reduces \eqref{expansion-u3-norm-3} to the following:
	\begin{align} 
		\sum_{\substack{(q_\omega)_{\omega \in \{0, 1\}^3} \\ 
				Q/2\leq q_\omega <Q}} \frac{1}{\phi(\text{rad}(R))^3} \prod_{p^5 \mid R} \frac{1}{(p-1)} , 
	\end{align}
	and above we have recalled the restriction on the size of the $(q_\omega )$ from the definition \eqref{e:lambdaQ}.

	The number of possible choices of $(q_\omega)_{\omega \in \{0, 1\}^3}$ so that their product is $R$ is at most $\tau(R)^8$, where $\tau $ is the divisor function.  
	So, the above expression is bounded by:
	\begin{align} \label{expansion-u3-norm-4}
		 \sum_{\substack{(Q/2) ^{8} \leq R < Q^8 \\ \text{rad}(R)^4 \mid R\\
				R\mid \text{rad}(R)^8}} \frac{\tau(R)^8}{\phi(\text{rad}(R))^3} \prod_{p^5 \mid R} \frac{1}{(p-1)} .
	\end{align}
	For any possible choice of $R$, $\tau (R) ^{8}\lesssim Q ^{o(1)}$, so we can ignore that term above.  In a similar vein, note that 
	\begin{equation}
		\frac{1}{\phi(\text{rad}(R))^3} \prod_{p^5 \mid R} \frac{1}{(p-1)} 
		=
		\prod_{p \mid R} \frac{1}{(p-1)^3} 
		\prod_{p^5 \mid R} \frac{1}{(p-1)} 
		\lesssim Q ^{o(1)} 
		\prod_{p \mid R} \frac{1}{p^3} 
		\prod_{p^5 \mid R} \frac{1}{p}.  
	\end{equation}
	So, it suffices to bound 
	\begin{align}  
		\sum_{\substack{(Q/2) ^{8} \leq R < Q^8 \\ \text{rad}(R)^4 \mid R\\
				R\mid \text{rad}(R)^8}}  \text{rad}(R)^{-3}
		\prod_{p^5 \mid R} \frac{1}{p} \lesssim Q^{-3 + o(1)}  .
	\end{align}
	
	Let us explain the $Q ^{o(1)}$ term above. 
	Let $r = \text{rad}(R)$, and $R= r^4\cdot s$, 
	so that $s\mid r^4$.  Observe that for a fixed $r$ there are at most $Q^{o(1)}$ possible choices of $s$.  
	The integer $\textup{rad}(s)$ divides $r$. 
	Hence, there are at most $2^{\omega(r)}$ possible choices of $\textup{rad}(s)$, with $\omega(n)$ being the number of distinct prime factors of $n$. 
	With $\textup{rad}(s)$ fixed, there are at most 
	$4^{\omega (s)}$ possible choices for $s$, since $s\mid \textup{rad}(s)^4$.  Thus, the number of possible choices of $s$ is at most $2^{3\omega(r)}=Q^{o(1)}$.  
	
	We parameterize the sum using $r$ and $s$, as defined.  It remains to show that 
	\begin{align}  
		\sum_{\substack{ r \gtrsim Q \\   }  } 
		\mu(R)^2r ^{-3} \max_{\substack{s\geq 1\\  r^4s \approx Q^8\\
				s\mid r,\  s\mid r^4}} \textup{rad}(s)^{-1} 
		\lesssim Q^{-3}. 
	\end{align}
	Then, write $r\simeq Q ^a$, where we will take $a= j/\log Q$, for integers $\lfloor \log Q\rfloor \leq j  \leq 2\lceil \log Q\rceil$.  
	With $r\simeq Q^a$, it follows that $s \approx Q^{8-4a}$.  Hence, $\textup{rad}(s)\gtrsim Q^{2-a}$.  Then, 
	\begin{align}  
		\sum_{\substack{ r \simeq Q^a \\   }  } 
		r ^{-3}  \times Q^{a-2} 
		\lesssim Q^{-2a-2+a} = Q^{-2-a}. 
	\end{align}
	Summing over the values of $a$, we have a geometric series of ratio about $2$. And the smallest value of $a$ is $1$, hence our bound is $Q^{-3}$, as required.  
\end{proof} 

We return to the matter of establishing \eqref{counting-possible-numerators}. 
The condition \eqref{cube-faces-linear-form-3} is a combinatorial condition imposed on the $ (a _{\omega })$ on each face of the cube $\{0,1\}^3$.  
Recall that given a prime $p$, we mark  a vertex $\omega$ 
of $\{0,1\}^3$    if  $a_{\omega p} \not \equiv 0 \pmod p$ (in other words $p | q_\omega$).
Suppose that we are in the case of at least $4$ marked vertices in the cube; we denote the set of these vertices by $S = S_p$. 
A key point is that   the values of $a_{\omega p}$ 
at a small subset $T\subset S$   determines all the values of $a_{\omega p}$, 
for $\omega\in S$.  

Assume that at the beginning we know the values of $a_{\omega p}$ for the subset $T = T_p$ of marked vertices $S$ and color these vertices green. 
Now we proceed according to the following algorithm:
\begin{enumerate}
	\item Query whether there exists a face with all marked vertices but one vertex  being green. If that is \emph{not} the case, then we \textbf{quit} the algorithm. 
	
	\item If the algorithm runs, then the equation \eqref{cube-faces-linear-form-3} determines the value $a_{\omega p}$.
	
	\item That said, we color the vertex $\omega$ green and 
	we return to the initial query.
\end{enumerate}

\begin{lemma}\label{l:algorithm}
	For any collection of $S = S_p$ of marked vertices, there exists a subset $T \subset S$ of marked vertices so that 
	these two conclusions hold. 
	\begin{itemize}
		\item The following upper bound holds
		\[ |T| \leq \begin{cases} 1 & \text{ if } |S| = 4 \\
			|S| - 4 & \text{ if } 5 \leq |S| \leq 8 \end{cases};\]
		\item If each vertex in $T$ is marked green and $T$ is submitted to the above algorithm, the algorithm terminates with each vertex $S$ being marked green.
	\end{itemize}
\end{lemma}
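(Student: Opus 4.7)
The plan is to establish the lemma through a case analysis on the configuration of $S \subset \{0,1\}^3$, reducing the problem using the symmetry group of the cube. The hyperoctahedral group $H = S_3 \ltimes (\Z/2\Z)^3$ of order $48$, generated by coordinate permutations and coordinate negations, acts on $\{0,1\}^3$ and preserves both the face-incidence constraint (every face contains $0$ or at least $2$ marked vertices) and the firing rule underlying the algorithm; modulo $p$, the action also preserves the full system of face equations, since negating coordinate $i$ merely swaps the roles of the faces $\omega_i = 0$ and $\omega_i = 1$ and multiplies some equations by $-1$. It therefore suffices to check one representative $S$ from each $H$-orbit of valid marked configurations of each size $|S| \in \{4,5,6,7,8\}$.

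For $|S|=4$, the valid configurations fall into three orbits: a \emph{face} such as $S = \{000,001,010,011\}$, a \emph{rectangle} formed by a pair of non-coplanar parallel edges such as $\{000,001,110,111\}$, and a \emph{tetrahedron} such as $\{000,011,101,110\}$. In each case the single-vertex choice $T = \{000\}$ suffices: for the face, one fires $\omega_2 = 0$ to determine $a_{001}$, then $\omega_3 = 0$ to determine $a_{010}$, and then $\omega_2 = 1$ to determine $a_{011}$; the rectangle and tetrahedron are handled analogously by chaining one-equation-at-a-time inferences across suitable adjacent faces. For $|S| = 8$ the entire cube is the only orbit, and $T = \{000,001,010,100\}$ works: the three faces $\omega_i = 0$ successively determine $a_{011}$, $a_{101}$, $a_{110}$, after which $\omega_1 = 1$ determines $a_{111}$. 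The remaining sizes $|S| = 5, 6, 7$ admit entirely analogous orbit-by-orbit checks. The bound $|T| \leq |S|-4$ for $|S| \geq 5$ is tight because the four constraints $L_j \in \Z$, $0 \leq j \leq 3$, impose at most four independent linear relations modulo $p$ on the $|S|$ unknowns $a_{\omega p}$, while the bound $|T| \leq 1$ for $|S| = 4$ is the minimum needed to initialize the algorithm.

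The main obstacle is that the greedy firing rule is sensitive to the choice of $T$, so within each orbit one must exhibit a \emph{good} starting set rather than merely count dimensions. For instance, in the $|S|=5$ configuration $S = \{001,010,100,110,111\}$, the choice $T = \{110\}$ stalls immediately, since every face of the cube containing $110$ also contains at least two other non-green marked vertices, whereas $T = \{001\}$ propagates successively through the faces $\omega_1 = 0$, $\omega_2 = 0$, $\omega_3 = 1$, and $\omega_1 = 1$ to cover all of $S$. The case analysis must therefore pair each orbit representative with a specific choice of $T$ that breaks the apparent symmetric deadlocks, and verify the existence of a firing sequence directly.
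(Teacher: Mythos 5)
Your approach --- direct verification over orbits of the hyperoctahedral group --- is sound in principle and your worked cases ($|S|=4$ via the three orbits face/rectangle/tetrahedron, $|S|=8$, and the one $|S|=5$ example) are all correct. But the proof has a genuine gap exactly where the lemma is hard: the sentence ``the remaining sizes $|S|=5,6,7$ admit entirely analogous orbit-by-orbit checks'' asserts rather than proves the case $|S|=5$ with $|T|=1$, which is the only genuinely delicate claim in the lemma. You yourself point out that the firing rule is sensitive to the choice of $T$ and that a bad single seed can stall; that observation makes the omission fatal, since ``analogy'' cannot substitute for exhibiting a good seed in each of the several orbits of valid $5$-configurations (every face meeting $S$ in $\geq 2$ vertices forces, by the count $\sum_{\text{faces}} |S\cap \text{face}| = 15$, either one face containing four marked vertices, or a $3$--$2$ split between some pair of opposite faces, which up to rotation about that axis splits into four distinct alignments --- precisely the four diagrams the paper checks in Figure~\ref{fig2}). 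The cases $|S|=6,7$ likewise have multiple orbits and are not verified. A secondary, minor point: your closing ``dimension count'' only shows that $|T|=|S|-4$ cannot be improved, which is irrelevant to the upper bound the lemma actually requires.

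The paper avoids almost all of this enumeration with a monotonicity trick you could adopt to close the gap cheaply: letting $f(n)$ be the minimal seed size that works for every valid configuration of size $n$, one observes that adjoining a single vertex to both $S$ and $T$ never breaks a firing sequence (the new vertex is already green, so every face that had all-but-one marked vertices green still does), whence $f(n+1)\leq f(n)+1$. Since any $S$ with $|S|\leq 3$ leaves some face with exactly one marked vertex, $f(1)=f(2)=f(3)=0$, so $f(4)\leq 1$, and the subadditivity reduces the entire lemma to the single equality $f(5)=1$ --- i.e.\ to the four diagrams plus the trivial subcase where a face carries four marked vertices. If you either import this reduction or explicitly write out the $|S|=5$ orbits with a working seed for each, your argument becomes complete.
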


Before turning to the proof, it is helpful to provide an example of the algorithm. 
Suppose that set $S$ is the set of all blue and green points in Figure \ref{fig1}, and we choose $T$ to be the set of green points (as in the algorithm). 
The face numbered $3$ satisfies the first step of the algorithm; therefore, the point $A$ can be colored green. Then we go back to step one of the algorithm; face $4$ satisfies the query, so we change the color of vertex $B$ to green. In the next two repetitions we repeat that procedure with faces $1$ and $5$, which leads to coloring all blue vertices green, which terminates the algorithm. 

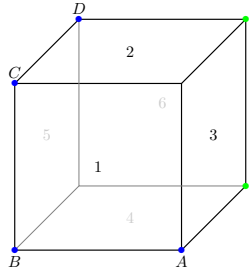
\begin{figure}
	\resizebox{.22\textwidth}{!}{%
		\begin{tikzpicture}
			\draw[thick](4,4,0)--(0,4,0)--(0,4,4)--(4,4,4)--(4,4,0)--(4,0,0)--(4,0,4)--(0,0,4)--(0,4,4);
			\draw[thick](4,4,4)--(4,0,4);
			\draw[gray](4,0,0)--(0,0,0)--(0,4,0);
			\draw[gray](0,0,0)--(0,0,4);
			\draw(2,2,4) node{1};
			\draw(2,4,2) node{2};
			\draw(4,2,2) node{3};
			\draw[gray!40](2,0,2) node{4};
			\draw[gray!40](0,2,2) node{5};
			\draw[gray!40](2,2,0) node{6};
			\foreach \Point in {(4,0,4), (0,4,4), (0,0,4), (0,4,0)}{
				\draw \Point node[blue]{$\bullet$};
			}
			\foreach \Point in {(4,4,0), (4,0,0)}{
				\draw \Point node[green]{$\bullet$};
			}
			\draw (4,0,4) node[below]{$A$};
			\draw (0,0,4) node[below]{$B$};
			\draw (0,4,4) node[above]{$C$};
			\draw (0,4,0) node[above]{$D$};
	\end{tikzpicture}}
	\caption{An example of labelling of a cube with green and blue marked vertices in the case $|S|=6$.} \label{fig1} 
\end{figure}

With this example in mind, we turn to the proof of our claim.

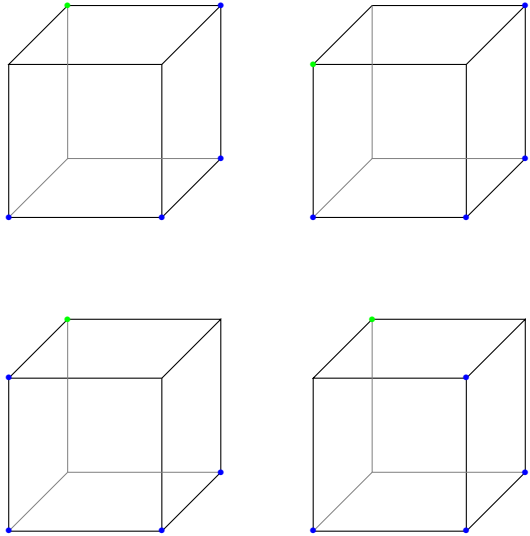
\begin{figure}
	\begin{tabular}{c@{\hspace{1cm}}c} 
		\resizebox{.2\textwidth}{!}{\begin{tikzpicture}
				\draw[thick](4,4,0)--(0,4,0)--(0,4,4)--(4,4,4)--(4,4,0)--(4,0,0)--(4,0,4)--(0,0,4)--(0,4,4);
				\draw[thick](4,4,4)--(4,0,4);
				\draw[gray](4,0,0)--(0,0,0)--(0,4,0);
				\draw[gray](0,0,0)--(0,0,4);
				\foreach \Point in {(4,4,0), (4,0,4), (4,0,0), (0,0,4)}{
					\draw \Point node[blue]{$\bullet$};
				}
				\draw (0,4,0) node[green]{$\bullet$};
		\end{tikzpicture}} &
		\resizebox{.2\textwidth}{!}{\begin{tikzpicture}
				\draw[thick](4,4,0)--(0,4,0)--(0,4,4)--(4,4,4)--(4,4,0)--(4,0,0)--(4,0,4)--(0,0,4)--(0,4,4);
				\draw[thick](4,4,4)--(4,0,4);
				\draw[gray](4,0,0)--(0,0,0)--(0,4,0);
				\draw[gray](0,0,0)--(0,0,4);
				\foreach \Point in {(4,4,0), (4,0,4), (4,0,0), (0,0,4)}{
					\draw \Point node[blue]{$\bullet$};
				}
				\draw (0,4,4) node[green]{$\bullet$};
		\end{tikzpicture}} \\[1cm]
		\resizebox{.2\textwidth}{!}{\begin{tikzpicture}
				\draw[thick](4,4,0)--(0,4,0)--(0,4,4)--(4,4,4)--(4,4,0)--(4,0,0)--(4,0,4)--(0,0,4)--(0,4,4);
				\draw[thick](4,4,4)--(4,0,4);
				\draw[gray](4,0,0)--(0,0,0)--(0,4,0);
				\draw[gray](0,0,0)--(0,0,4);
				\foreach \Point in {(0,4,4), (4,0,4), (4,0,0), (0,0,4)}{
					\draw \Point node[blue]{$\bullet$};
				}
				\draw (0,4,0) node[green]{$\bullet$};
		\end{tikzpicture}} &
		\resizebox{.2\textwidth}{!}{\begin{tikzpicture}
				\draw[thick](4,4,0)--(0,4,0)--(0,4,4)--(4,4,4)--(4,4,0)--(4,0,0)--(4,0,4)--(0,0,4)--(0,4,4);
				\draw[thick](4,4,4)--(4,0,4);
				\draw[gray](4,0,0)--(0,0,0)--(0,4,0);
				\draw[gray](0,0,0)--(0,0,4);
				\foreach \Point in {(4,4,4), (4,0,4), (4,0,0), (0,0,4)}{
					\draw \Point node[blue]{$\bullet$};
				}
				\draw (0,4,0) node[green]{$\bullet$};
		\end{tikzpicture}}
	\end{tabular}
	\caption{The four cases, up to symmetry, in which there are 5 marked vertices on the cube, with 1 being green, and no face having 4 marked vertices.}  
	\label{fig2} 
\end{figure} 

\begin{proof}[Proof of Lemma \ref{l:algorithm}]
	First of all, let $f$ denote the function $\{1,2,3,4, 5, 6, 7, 8\} \to \Z_{\geq 0}$ so that $f(n)$ represents the minimal number with the property: for each set $S$ of marked vertices in the cube  
	$\{0,1\}^3$ 
	of size $n$ (possibly having a face with only one vertex in $S$), there exists a set $T \subset S$ of size $f(n)$ of vertices that are marked green. 
	Moreover, inputing $T$  into the algorithm leads to all vertices of $S$ being colored in green. 
	
	One observes that adding one vertex outside of $S$ to both $S$ and $T$ (i.e.\ adding an additional green vertex) does not change the outcome of algorithm. 
	This means that $f(n + 1) \leq f(n) + 1$. 
	Now, $f(1) = f(2) = f(3) = 0$ as in these cases there is always face with only one vertex. 
	Thus, $f(4)\leq 1$, and in fact equals $1$. 
	So, the only equality that requires checking is $f(5) = 1$. In the case with $5$ vertices in $S$ and a face containing all four vertices one picks $T$ to be any vertex on the face containing four vertices from $S$. The case with $f(5) = 1$ is still unresolved, but now we can without loss of generality assume that bottom face contains $3$ vertices in $S$ and top face contains $2$ vertices in $S$. Up to rotation with respect to the top-bottom axis, there are only $4$ possible alignments for which the condition with $4$ vertices on a face is not satisfied. The collection of $4$ diagrams in Figure \ref{fig2} handles each case separately; elementary case analysis concludes the proof.
\end{proof}

That said, one can finally pass to the counting that leads to the estimate \eqref{counting-possible-numerators}. Since we are solving the collection of systems of linear congruences modulo each prime (i.e. \eqref{cube-faces-linear-form-3}), one derives the multiplicative form of the upper bound on $\triangle _{(q_\omega)_{\{0, 1\}^3}}$. For each prime $p$, we take the set of marked vertices $S_p$ and the appropriate subset $T_p \subset S_p$ that we constructed. There are 
\[ (p-1)^{|T_p|} \] choices for $(a_{\omega p})_{\omega \in T_p}$ as each $a_{\omega p} \neq 0 \mod p$, and then $(a_{\omega p})_{\omega \in S_p \backslash T_p}$ is uniquely determined by the construction of $T_p$ and the system of equations \eqref{cube-faces-linear-form-3}. The remaining $a_{\omega p}$ must be zero as $S_p$ captures exactly these $\omega$ with $p | q_\omega$.  This gives us the formula  \eqref{counting-possible-numerators}.

\medskip 

\section{$U^3$ Norms of the Von Mangoldt Function and the Heath-Brown Model} \label{s:Hamed} 
We establish the following $U^3$-approximation result of the von Mangoldt function by its Heath-Brown model.

\begin{proposition}\label{p:logsavings}
	Let $N \geq 1$, and $ Q = Q_N \coloneqq  \exp( (\log N)^{1/10})$. Then for  all $ A \geq 1$
	\begin{align}\label{e:logsavings}
		\| \Lambda - \Lambda_{\leq Q} \|_{U^3[N]} \lesssim_A (\log N)^{-A},  
	\end{align}
	where $\Lambda $ is the von Mangoldt function and  $\Lambda_{\leq T}$ is defined in \eqref{e:lambda000}.
\end{proposition}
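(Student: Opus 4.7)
The plan is a contradiction argument driven by Leng's quantitative inverse theorem for the $U^3$ norm. Suppose \eqref{e:logsavings} fails, so that $\lVert \Lambda - \Lambda_{\leq Q} \rVert_{U^3[N]} \geq (\log N)^{-A}$ for some fixed $A$. Leng's theorem then produces a two-step nilmanifold $G/\Gamma$ of bounded dimension, a polynomial sequence $g \colon \mathbb{Z} \to G$, and a Lipschitz function $F \colon G/\Gamma \to \mathbb{C}$ of Lipschitz norm at most polylogarithmic in $N$ such that
\begin{align}
\Bigl| \Expectation_{n \in [N]} (\Lambda(n) - \Lambda_{\leq Q}(n)) \, \overline{F(g(n)\Gamma)} \Bigr| \gtrsim (\log N)^{-CA}
\end{align}
for an absolute constant $C$. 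The goal is to show the left-hand side is in fact $O_{A'}((\log N)^{-A'})$ for every $A'$, which contradicts the lower bound once $A$ is chosen large relative to $C$ and $A'$.

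To establish this, I would analyze the two correlations separately and match them term-by-term. For $\Lambda$, I would invoke the quantitative Green-Tao ``Möbius and nilsequences'' / equidistribution theory, incorporating a Siegel zero correction $\Lambda' = \Lambda - \Lambda_{\mathrm{Siegel}}$ that absorbs the contribution of a possible exceptional real character. This expresses $\Expectation_n \Lambda(n)\overline{F(g(n)\Gamma)}$ as an explicit major-arc main term, determined by the residue behavior of the polynomial orbit $g(n)\Gamma$ modulo small $q$, plus an error of size $(\log N)^{-B}$ for any $B$. For $\Lambda_{\leq Q}$, I would unfold the definition in terms of Ramanujan sums $c_q(n) = \sum_{(a,q)=1} e(-an/q)$, thereby reducing the correlation to a finite sum of twisted correlations $\Expectation_n e(-an/q)\overline{F(g(n)\Gamma)}$, each a polynomial equidistribution problem on the product nilmanifold $(G\times \mathbb{R})/(\Gamma\times\mathbb{Z})$. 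Another application of Green-Tao factorization evaluates these as the same major-arc main term plus a polylog error, since $\Lambda_{\leq Q}$ is by construction the truncation of $\Lambda$ to the major arcs of modulus $\leq Q$.

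The main obstacle is the quantitative bookkeeping: Leng's inverse theorem delivers only polylogarithmic savings in the complexity of $F$ and $g$, so every equidistribution statement and every treatment of the Siegel zero must be tracked with matching precision. In particular, one must verify that $Q = \exp((\log N)^{1/10})$ is large enough that every rational frequency $a/q$ which can correlate nontrivially with a nilsequence of complexity $\exp(O((\log\log N)^{O(1)}))$ satisfies $q \leq Q$; this guarantees the entire major-arc contribution to $\Expectation_n \Lambda(n) \overline{F(g(n)\Gamma)}$ is fully absorbed by $\Lambda_{\leq Q}$, leaving no residual main term in the difference and forcing the correlation to fall below $(\log N)^{-CA}$.
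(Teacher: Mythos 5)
Your opening move (contradiction plus Leng's quantitative $U^3$ inverse theorem) is the same as the paper's, but the way you propose to derive the contradiction has two genuine gaps. First, the term-by-term matching for $\Lambda_{\leq Q}$ does not close quantitatively. Unfolding $\Lambda_{\leq Q}$ into Ramanujan sums produces twisted correlations $\Expectation_n e(-an/q)\overline{F(g(n)\Gamma)}$ with coefficients $\mu(q)/\phi(q)$, and the total $\ell^1$ mass of this decomposition is $\sum_{q\leq Q}|\mu(q)|\asymp Q=\exp((\log N)^{1/10})$, which is far from polylogarithmic. So ``the same main term plus a polylog error'' per frequency sums to an error of size $\exp((\log N)^{1/10})(\log N)^{-B}$, which is useless; you would need per-frequency errors beating $Q^{-1}$, and quantitative equidistribution cannot give that uniformly (when $g$ is rational with period $q$ the correlation with $e(-an/q)$ is not small at all --- it is main term). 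The paper avoids this by writing $\Lambda_{\leq Q}$ as a single type I divisor sum $\sum_{d\mid n,\,d\leq Q}\alpha_d$ with $\alpha_d\lesssim(\log N)^2$, feeding that into the type I equidistribution machinery, and then, instead of evaluating any nilsequence main terms, pigeonholing all the way down to a single arithmetic progression $\mathcal{P}$ of gap $\exp((\log N)^{1/20})$ on which $|\Expectation_{n\in\mathcal{P}}(\Lambda-\Lambda'_{\leq Q})(n)|$ would have to be large; an elementary Page/Siegel--Walfisz computation (Lemma \ref{l:APest}) then gives the contradiction. Your final observation that the moduli produced by the inverse theorem are $\ll Q$ is correct, but it enters through this progression comparison, not through a main-term identity for nilsequence correlations.

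Second, you have not accounted for the Siegel discrepancy between $\Lambda$ and the \emph{uncorrected} model $\Lambda_{\leq Q}$, which is what the Proposition asserts. If an exceptional zero exists, the major-arc main term of $\Expectation_n\Lambda(n)\overline{F(g(n)\Gamma)}$ carries a contribution proportional to $n^{\sigma-1}\chi_{q_{\textup{Siegel}}}(n)$ which is simply absent from $\Lambda_{\leq Q}$, so the two main terms do \emph{not} agree and the difference does not automatically ``fall below $(\log N)^{-CA}$.'' Closing this requires showing that the correction term is itself negligible in $U^3$, i.e.
\begin{align}
\bigl\| \Lambda_{\leq Q}(n)\, n^{\sigma-1}\chi_{q_{\textup{Siegel}}}(n)\bigr\|_{U^3[N]} \lesssim N^{(\sigma-1)/2}\, q_{\textup{Siegel}}^{-1/18},
\end{align}
together with a dichotomy: if $q_{\textup{Siegel}}$ is large the factor $q_{\textup{Siegel}}^{-1/18}$ wins, and if it is small then $1-\sigma\gtrsim_\epsilon q_{\textup{Siegel}}^{-\epsilon}$ forces $N^{(\sigma-1)/2}$ to be super-polylogarithmically small. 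This is a substantial, separate component of the paper's proof (Lemma \ref{l:U3siegel} and the lemma following it, which rest on the combinatorial Ramanujan-sum count of Lemma \ref{l;Ep} and the $U^3$ bound for $\chi_{q_{\textup{Siegel}}}$), and your proposal currently treats it as already absorbed.
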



\begin{remark}
    As in \S \ref{p:Jan}, one may want to establish analogous versions of this inequality for the $U^s$ norm for $s \geq 3$. All reasoning below generalises except for the fact that we are not aware whether Lemma \ref{l:siegelcomp} holds for arbitrary $U^s$-norm. To obtain the analogue of this lemma for arbitrary Gowers norms, one would need to probably refer to the very powerful result of Leng, Sah and Sawhney \cite{leng2024quasipolynomial} for $L^{p_s}$ bounded functions, where $p_s$ is some exponent depending on $s$.
    
    The reason why we need to upgrade result from \cite{leng2024quasipolynomial} is that one has poor $L^\infty$ control on Heath-Brown model of von Mangoldt function, contrary to averaged control, due to the Lemma $4.6$ from \cite{KMTT}. We leave as an open question whether one can derive the quasipolynomial inverse theorem for $U^s$ Gowers norm when function has $L^{p_s}$-norm at most $1$ (with $p_s$ probably very large).
\end{remark}

The proof of this proposition is quite involved. 
It invokes the $U^3$ inverse theorem, with important contributions by Sanders \cite{MR2994508}, and  Green and Tao \cite{MR2651575}, among other references.
We appeal to the statement of Leng \cite{leng2023efficient}.  
To use these tools, we introduce a variant of $\Lambda _{\leq Q}$ that accounts for exceptional Siegel zeros. 

\smallskip

For $q\in \mathbb{N}$, a character $\chi_q$ is a nonzero multiplicative function from $(\mathbb{Z}_q)^{\times}\to \mathbb{C}$. Define the $L-$function $L(\chi_q,s)$ to be 
\begin{align}
	L(s,\chi) \coloneqq  \sum_{n>0} \frac{\chi_q(n)}{n^s} \, \textup{ for } Re(s) > 1,
\end{align}
and apply analytic continuation to extend the definition to the whole plane. It is known, \cite{iwaniec}*{Chapter 5}, 
that $L(\chi_q,s)$ has an entire analytic continuation, except for the case of $ \chi _q $ being the principle character, in which case there is a simple pole at $s=1$.  
In the case of primitive real characters $\chi_q$, the $L-$function $L(\chi_q,s)$ may have exactly one zero $ \sigma_q$ (known as the Siegel zero), such that 
\begin{align} \label{e;whereIsSigma}
	\frac{c_\epsilon }{q^{\epsilon}}
	< 1-\sigma_q<\frac{1}{\log q} , \qquad 0<\epsilon <1 .
\end{align}
We call such character, if it exists, \emph{exceptional}, 
and set $q=q_{\textup{Siegel}}$ and set $\sigma_q$ to be the pertaining Siegel zero. 
If 
\begin{align}\label{e:sigma0} \frac{\log^2 T}{(\log \log T)^8} \leq
	q_{\textup{Siegel}} \leq   T,
\end{align} 
we say that $ \sigma \coloneqq  \sigma_{q_{\textup{Siegel}}}$ is the \emph{Siegel zero at level $T$}. 
In particular, the Siegel zero at level $T$, if it exists,  is unique by the Landau-Page Theorem, \cite{iwaniec}*{Chapter 5}, or \cite{MR4875606}*{Section 2}.

\bigskip

Next, for each $T \geq 1$, recall the Heath-Brown model 
\begin{align}
	\Lambda_{ \leq T}(n) \coloneqq  \sum_{Q \leq T, \ Q:\textup{ dyadic}} \Lambda_Q(n) = \sum_{Q \leq T, \ Q:\textup{ dyadic}} \big( \sum_{Q/2 < q \leq Q} \frac{\mu(q)}{\phi(q)}c_q(n) \big),
\end{align}
see \eqref{e:lambda000},
and in the presence of  Siegel zero at scale $T^{1/2}$, set
\begin{align}\label{e:Lambdaleq}
	\Lambda'_{ \leq T}(n) \coloneqq \Lambda_{ \leq T} (n) \cdot (1- n^{\sigma-1}\chi_{q_{\textup{Siegel}} }(n)), 
\end{align}
where $\chi_{q_{\textup{Siegel}} }$ is the exceptional character related to the Siegel zero at scale  $T^{1/2}$, with Siegel zero $\sigma$.  If there is no level-$T^{1/2}$ Siegel zero,  then set
$$\Lambda'_{ \leq T}(n) \coloneqq  \Lambda_{ \leq T}(n). $$

\bigskip




\bigskip

First we show that $\Lambda$ and $\Lambda'_{\leq Q}$ are close in $U^3$ norm. Recall that $Q = Q_N = \textup{exp}( (\log N)^{1/10} ) $.  

\begin{lemma}\label{l:siegelcomp}
	There exists  a positive constant  $\mathbf{c} > 0$ so that for each $N \gg 1$ sufficiently large, the following bound holds:
	\begin{align}
		\| \Lambda - \Lambda_{\leq Q}' \|_{U^3[N]} \lesssim \exp(- (\log N)^\mathbf{c}).
	\end{align}
\end{lemma}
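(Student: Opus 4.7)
The plan is to argue by contradiction through the quantitative inverse $U^3$ theorem of Leng \cite{leng2023efficient}, reducing the lemma to a non-correlation statement against bounded-complexity two-step nilsequences. Suppose $\|\Lambda - \Lambda'_{\leq Q}\|_{U^3[N]} \geq \delta$; the aim is to conclude that $\delta \lesssim \exp(-(\log N)^{\mathbf{c}})$ for some absolute $\mathbf{c} > 0$. Applying Leng's inverse theorem produces a two-step nilmanifold $G/\Gamma$ of complexity at most $(\log 1/\delta)^{O(1)}$, a polynomial sequence $g\colon \Z \to G$, and a mean-zero Lipschitz function $F$ with $\|F\|_{\textup{Lip}} \leq 1$, such that
\begin{align}\label{eq:plancorr}
    \Bigl| \Expectation_{n \in [N]} \bigl(\Lambda(n) - \Lambda'_{\leq Q}(n)\bigr) \, \overline{F(g(n)\Gamma)} \Bigr| \gtrsim \delta^{O(1)}.
\end{align}
It suffices to bound each of the two correlations appearing in \eqref{eq:plancorr} separately, showing that both $\Lambda$ and $\Lambda'_{\leq Q}$ correlate with $F(g(n)\Gamma)$ up to the \emph{same} main term, with error $\exp(-(\log N)^c)$ for some $c > \mathbf{c}$.

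The correlation of $\Lambda$ with $F(g(n)\Gamma)$ is controlled by the quantitative Green-Tao orthogonality of the Siegel-corrected von Mangoldt function against two-step nilsequences. After writing
\begin{align}
    \Lambda(n) = \Lambda(n)\bigl(1 - n^{\sigma - 1}\chi_{q_{\textup{Siegel}}}(n)\bigr) + \Lambda(n)\, n^{\sigma - 1}\chi_{q_{\textup{Siegel}}}(n),
\end{align}
the first summand correlates with nilsequences of complexity $(\log 1/\delta)^{O(1)}$ with error $\exp(-(\log N)^c)$, while the second summand is the precise Siegel-zero contribution that must be matched on the Heath-Brown side. For $\Lambda'_{\leq Q}$, the explicit Ramanujan-sum representation
\begin{align}
    \Lambda_{\leq Q}(n) = \sum_{q \leq Q} \frac{\mu(q)}{\phi(q)} \sum_{(a,q)=1} e(-an/q)
\end{align}
reduces the correlation to a sum of phase correlations $\Expectation_{n\in[N]} e(-an/q) F(g(n)\Gamma)$ with $q \leq Q$, each of which is controlled by the Green-Tao quantitative equidistribution theorem for polynomial orbits on nilmanifolds. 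The major-arc contribution on each progression $n \equiv a \pmod{q}$ is designed to match the expansion of $\Lambda$ on that progression, and the Siegel correction $1 - n^{\sigma-1}\chi_{q_{\textup{Siegel}}}(n)$ is included precisely so that the Siegel-zero contribution to $\Lambda$ is cancelled by the corresponding term in $\Lambda'_{\leq Q}$.

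The main obstacle will be controlling the correlation of $\Lambda'_{\leq Q}$ with the nilsequence to the same precision as $\Lambda$. Unlike $\Lambda$, the Heath-Brown model is not $L^\infty$-bounded, so standard orthogonality statements in the Green-Tao framework do not apply directly; one must instead work through the Ramanujan-sum expansion term by term, leveraging the factorization theorem for nilsequences to isolate the major-arc frequencies, and being careful near the Siegel threshold $q_{\textup{Siegel}} \geq (\log N)^2/(\log\log N)^8$ from \eqref{e:sigma0}, where the Landau-Page uniqueness ensures that at most one character can produce an exceptional contribution that the correction term must absorb. Once the two correlations are shown to match up to error $\exp(-(\log N)^{c})$, any choice of $\mathbf{c} < c$ together with the polynomial complexity loss in Leng's theorem contradicts \eqref{eq:plancorr} unless $\delta \lesssim \exp(-(\log N)^{\mathbf{c}})$, which is the conclusion of the lemma.
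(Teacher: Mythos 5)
Your opening move---contradiction plus Leng's quantitative inverse $U^3$ theorem---matches the paper's Step 1, but from there your plan diverges into a ``bound the two correlations separately and match main terms'' strategy that leaves the two hardest steps unproved. First, the claim that the Siegel-corrected von Mangoldt function discorrelates with arbitrary two-step nilsequences of complexity $(\log 1/\delta)^{O(1)}$ up to error $\exp(-(\log N)^c)$ is not a citable black box at this strength: the Tao--Ter\"av\"ainen results give only $(\log N)^{-A}$ savings, and the quasi-polynomial version is precisely what must be assembled here from Vaughan's decomposition into type I, twisted type I, and type II sums, Leng's Weyl-sum lemmas for each type, the factorization of $g$ into smooth, rational, and equidistributed parts, and a pigeonholing down to arithmetic progressions (the paper's Steps 4--8). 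Your proposal defers all of this to a cited ``quantitative Green--Tao orthogonality,'' which is where the bulk of the work actually lives. Relatedly, the notion of ``the same main term'' for a correlation against a general two-step nilsequence is not well defined before the factorization theorem is applied: $\Expectation_{n}F(g(n)\Gamma)$ need not vanish, and the phases $e(-an/q)$ interact with $F(g(n)\Gamma)$ in a way that only becomes a tractable ``major-arc contribution'' after $g$ has been factored and the correlation reduced to averages over progressions.

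Second, and more importantly, your argument has no endpoint: the actual number-theoretic input that distinguishes $\Lambda'_{\leq Q}$ from, say, the uncorrected $\Lambda_{\leq Q}$ is the statement that $\sum_{n\in\mathcal P}(\Lambda-\Lambda'_{\leq Q})(n)\lesssim N\exp(-c(\log N)^{1/10})$ for every arithmetic progression $\mathcal P\subset[N]$ of gap at most $Q^{1/2}$ (the paper's Lemma \ref{l:APest}). Proving this requires the Landau--Page theorem, the explicit evaluation of $\sum_{n\leq N',\,n\equiv a\ (q)}\Lambda_{\leq Q}(n)$ via the Ramanujan-sum identity $c_q(n)=\sum_{d\mid(q,n)}\mu(q/d)d$, and a separate computation showing that the twisted sum $\sum\Lambda_{\leq Q}(n)n^{\sigma-1}\chi_{q_{\textup{Siegel}}}(n)$ reproduces exactly the Siegel term in the prime-counting formula. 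You gesture at this (``the major-arc contribution on each progression is designed to match the expansion of $\Lambda$'') but never state or verify it; without it the contradiction cannot be closed. Note also that the paper does not split $\Lambda$ and $\Lambda'_{\leq Q}$ until the very last step: it carries the difference through the whole type I/II and factorization analysis and only invokes the progression estimate at the end, which avoids having to assign a ``main term'' to either correlation in isolation. A further missing ingredient is the treatment of a large Siegel modulus: when $q_{\textup{Siegel}}$ exceeds a quasi-polynomial threshold one cannot run the twisted type I analysis, and the paper handles this by a separate $U^3$ bound on $\Lambda_{\leq Q}\cdot n^{\sigma-1}\chi_{q_{\textup{Siegel}}}$ (its Step 3 and Lemma \ref{l:U3siegel}), which your sketch does not address.
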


The proof uses the approach in Leng \cite{leng2023efficient,leng2023improvedquadratic}, exploiting the strongest known version of quasipolynomial inverse theorem for the Gowers $U^3$-norm. The proof thus uses the language of nilsequences, which we recall here.

For any group $G$ (or vector space $X$) and any subset $H\subseteq  G$ (or  $Y\subseteq X$), we mean $\langle H\rangle$ (or $\langle Y\rangle$) to be subgroup (or subspace) generated by $H$ (or $Y$). 

\medskip

We call $G$  a \emph{nilpotent group} if $G$ has a lower central series which terminates after finitely many steps. An $s-$step nilpotent group $G$ has the following standard filtration:
\begin{align}
	\langle e_G\rangle = G_{(s)} \lhd G_{(s-1)}\lhd \cdots \lhd G_{(2)}\lhd G_{(1)} =G, 
\end{align}
where 
$$G_{(i)}\coloneqq [G,G_{(i-1)}] = \langle hkh^{-1}k^{-1}; h\in G , k \in G_{(i-1)}\rangle .$$
In general, we call $G_{\bullet}$ a filtration, if 
\begin{align}
	\langle e_G\rangle = G_{\ell} \lhd G_{\ell-1}\lhd \ldots \lhd G_{2}\lhd G_{1} =G, 
\end{align}
where $[G_i,G_j]\subseteq G_{i+j}$.

Roughly speaking, a \emph{nilmanifold} $G/\Gamma$ is  a differentiable manifold which has a nilpotent group of diffeomorphisms acting transitively on it. We need a more precise structure related to this as follows.

\begin{definition}\label{d:nilmalcevdef}
	A nilmanifold $X=G/ \Gamma$ of degree $k$, step $s$, complexity $M$, and dimension $d$ is the following data.
	\begin{enumerate}
		\item An $s-$step connected and simply connected nilpotent Lie group $G$ and a discrete co-compact subgroup $\Gamma$;
		\item A filtration $G_{\bullet} = \{G_i\}_{i=1}^{\infty}$ with $G_{i}=\langle e_G\rangle$ for $i\geq k+1$. 
		\item A basis $(X_1,\ldots,X_d)$ of $\mathfrak{g}=\log (G)$, known as Mal'cev basis having the following properties.
		\begin{itemize}
			\item For every $1\leq i,j\leq d$ we have 
			\begin{align}
				[X_i,X_j] = \sum_{k>i,k>j} a_{ijk}X_k
			\end{align}
			where $a_{ijk} = \frac{b_{ijk}}{c_{ijk}}$ and $(b_{ijk},c_{i,j,k})=1$ and $0\leq b_{ijk}< c_{ijk}<M$.
			\item If $d_i=\dim(G_i)$, then $\log(G_i)= \langle X_j; d-d_i<j\leq d\rangle$.
			\item Each element of $G$ can be uniquely written as 
			$$g=\exp(t_1 X_1)\exp(t_2X_2)\ldots \exp(t_dX_d) \,  \qquad t_1,\ldots,t_d\in \mathbb{R}.$$ 
			We denote the Mal'cev coordinate map  $\psi:G\to \mathbb{R}^d$ to be 
			$$\psi(g)=(t_1,\ldots t_d).$$
			\item $\Gamma$ is precisely all elements of $G$ such that $\psi(g)\in \mathbb{Z}^d$. 
		\end{itemize}
	\end{enumerate}
\end{definition}

Given a nilmanifold $G/\Gamma$, we say an element $g$ has  \emph{height $D$}, or \emph{$D-$rational}, if $g^D\in \Gamma$. 

For every filtration $G_{\bullet}$, the group $\textup{Poly}(\mathbb{Z},G_{\bullet})$ of polynomial sequences  $p:\mathbb{Z}\to G$ is
\begin{align}
	\textup{Poly}(\mathbb{Z},G_{\bullet})\coloneqq \langle g_i^{P_i(n)}; g_i\in G_{\deg(P_i)}, P_i\in \mathbb{R}[y]\rangle
\end{align}

We call a sequence $n\mapsto F(g(n)\Gamma)$  a \emph{nilsequence}, if $F$ is a Lipschitz function, and $g\in \textup{Poly}(\mathbb{Z},G)$. A degree $k$ nilsequence corresponds to a nilmanifold of degree $k$ and therefore is in the form $F(g_0g_1^{n} g_2^{\binom{n}{2}} \ldots g_k^{\binom{n}{k}} \Gamma  )$ where $g_i \in G_i, i \in \left\{1,\ldots,k\right\}.$  

Given a group $G$, we call $Z(G)$ to be the center of $G$ as follows:
\begin{align}
	Z(G) \coloneqq \{z\in G ; zg=gz \textup{ for all } g\in G\}.
\end{align}

Given a nilmanifold $G/\Gamma$,   a connected and simply connected subgroup $T\leq Z(G)$ such that  $\Gamma\cap T$ is co-compact in $T$, and a continuous homomorphism $\eta:T\to \mathbb{R}$ with $\eta(T\cap \Gamma)\subseteq \mathbb{Z}$, we call a function $F:G\to \mathbb{C}$ to be \emph{$T-$vertical character} with frequency $\eta$, if 
\begin{align}
	F(gx) = e(\eta(g))F(x) \qquad \textup{ for all } g\in G, x\in \Gamma.
\end{align}
Particularly, if $T=G_{(s)}$, then we call $F$ a \emph{vertical character}. 

Let $V$ be a vector space with basis $B$ and $W\leq V$ be a subspace. 
We call $W$ an \emph{$L-$rational subspace}  if there exists a basis $B'$ such that 
\begin{align}
	w_j =\sum_{i} a_{ij} v_i ,\qquad w_j\in B', v_i\in B
\end{align}
and $a_{ij} = \frac{b_{ij}}{c_{ij}}$,  $0\leq b_{ij}<c_{ij}\leq L$, and $(b_{ij},c_{ij})=1$. 

A connected and simply connected subgroup $G'$ of a nilmanifold $G/\Gamma$ given by Mal'cev basis $\mathcal{X}$ is $L-$rational, if $\mathfrak{g}'=\log(G')$ is $L-$rational with respect to $\mathfrak{g}=\log(G)$ given by basis $\mathcal{X}$.  

With these definitions in mind, we present the following  quantitative Gowers uniformity inverse theorem.

\begin{theorem}\label{inverseu3} Let $f: [N] \to  \mathbb{C}$ be a function with $ \Expectation_{n \in [N]} |f(n)|^{1024}  \leq 1$ and $\delta \in \left(0,\frac{1}{25}\right).$ Suppose that  \[ \left \| f \right \|_{U^3[N]} \geq \delta. \] Then there exists some absolute constant $C$, a degree two nilsequence $F(g(n)\Gamma)$ of complexity and $\|F \|_{\text{Lip}}$ bounded by $\exp\left( O\left ( \log^C\left(\delta^{-1}\right)\right )  \right)$ on a nilmanifold of dimension $d \leq O\left ( \log^C\left( \delta^{-1} \right)\right )$ such that \[ \left| \Expectation_{n \in [N]} f(n)\overline{F(g(n)\Gamma) }      \right| \geq \exp \left( -O\left ( \log^C\left(\delta^{-1}\right)\right ) \right). \]
	
\end{theorem}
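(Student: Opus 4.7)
The target is the quantitative $U^3$ inverse theorem for $L^{1024}$-bounded functions, due to Leng \cite{leng2023efficient}, building on Sanders \cite{MR2994508} and Green--Tao \cite{MR2651575}. My plan splits into two stages: first establish the inverse theorem for $L^\infty$-bounded $f$, and then upgrade to the $L^{1024}$-bounded hypothesis.

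For $L^\infty$-bounded $f$, proceed in three steps. First, expand
\begin{align}
\|f\|_{U^3(\mathbb{Z})}^8 = \sum_{h} \|\triangle_h f\|_{U^2(\mathbb{Z})}^4
\end{align}
and pigeonhole: for a positive-density set of $h$ there is a frequency $\theta(h)\in\mathbb{T}$ for which $|\widehat{\triangle_h f}(\theta(h))|$ is non-negligible. Second, apply Sanders' quasipolynomial Freiman--Bogolyubov theorem to the resonance set $\{(h,\theta(h))\}$, combined with a symmetrization argument on the two-fold derivatives $\triangle_{h_1}\triangle_{h_2} f$, to extract a bracket-linear approximation $\theta(h)\approx 2\alpha h + \beta \lfloor \gamma h \rfloor$ on a positive-density subset. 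Third, integrate this derivative structure up to obtain correlation with a bracket quadratic phase $e(\alpha n^2 + \beta \lfloor \gamma n\rfloor n + \cdots)$, and realize this phase as a polynomial orbit $F(g(n)\Gamma)$ on a two-step nilmanifold of complexity $\exp(O(\log^{C}(1/\delta)))$, with $C$ tracking the aggregate quantitative losses in Sanders' theorem.

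To upgrade to the $L^{1024}$-bounded hypothesis, the naive truncation $f = f\cdot\mathbf{1}_{|f|\leq K} + f\cdot\mathbf{1}_{|f|>K}$ followed by application of the $L^\infty$-bounded theorem does not suffice for quasipolynomial $C$: the universal interpolation $\|g\|_{U^3[N]} \lesssim \|g\|_{L^8[N]}$ combined with Markov's inequality gives $\|f\cdot\mathbf{1}_{|f|>K}\|_{U^3[N]} \lesssim K^{-127}$, but balancing this tail against the $K^{-1023}$ correlation loss and the quasipolynomial complexity bound of the output forces $K$ to grow in a way that inflates $C$ beyond acceptable limits. Leng's proof accordingly reworks each step of the $L^\infty$ argument intrinsically for $L^{1024}$-bounded weights: the pigeonhole and Freiman extractions are replaced by weighted versions controlled by $L^{p}$-averages of $\triangle_h f$, and the cube-counting underlying the symmetrization invokes the Gowers--Cauchy--Schwarz inequality (Lemma \ref{CauchySchwarzGowers}) in its full generality. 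The exponent $1024$ is the smallest power that comfortably absorbs the aggregate polylogarithmic losses in these extractions.

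The principal obstacle is the $L^\infty$-bounded quasipolynomial inverse theorem itself: a landmark synthesis of Sanders' quantitative Freiman theorem with higher-order Fourier analysis and nilmanifold realization. The $L^{1024}$ refinement is then a technically demanding but conceptually parallel adaptation; the open question mentioned in the subsequent remark for $U^s$ with $s \geq 4$ is precisely the analogous $L^{p}$-bounded upgrade for higher Gowers norms.
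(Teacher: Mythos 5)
Your proposal does not match what the paper does, and as written it has a genuine gap. The paper does not prove the $U^3$ inverse theorem from first principles: it quotes Leng's periodic version (Theorem \ref{inverseu3lengappendix}, i.e.\ \cite[Theorem 8]{leng2023improvedquadratic}) as a black box and deduces Theorem \ref{inverseu3} by a short transference argument --- choose a prime $p\in[8N,16N]$, set $\tilde f \coloneqq f\mathbf{1}_{[N]}$ viewed as a function on $\Z_p$, invoke \cite[Lemma B.5]{MR2651575} to see that $\|\tilde f\|_{U^3(\Z_p)}\geq c\delta$ while the $L^{1024}$ normalization is preserved, apply the periodic inverse theorem, and finally periodize the resulting nilsequence so that the correlation over $\Z_p$ becomes a correlation over $[N]$. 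Your write-up omits this interval-to-cyclic-group passage entirely, and that passage is the only step the paper actually carries out.

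More seriously, the part of your argument meant to replace the citation is not a proof but a table of contents for one. Each of your three steps in the $L^\infty$-bounded case --- extracting the frequency map $h\mapsto\theta(h)$ from the $U^2$ norms of $\triangle_h f$, applying Sanders' quasipolynomial Bogolyubov--Ruzsa theorem together with a symmetrization to obtain bracket-linear structure in $\theta(h)$, and integrating up to a degree-two nilsequence with quasipolynomial complexity control --- is itself a theorem occupying a substantial portion of \cites{MR2994508,MR2651575,leng2023improvedquadratic}, and none of the quantitative bookkeeping (which is the entire content of the word ``quasipolynomial'' in the conclusion) is carried out or even made precise. The same applies to the $L^{1024}$ upgrade: asserting that ``each step is reworked with weighted versions'' and that $1024$ is ``the smallest power that comfortably absorbs the losses'' is a plausible narrative, not an argument. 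Either use the inverse theorem as a black box and supply the missing transference to $[N]$, or, if you intend to prove it, establish each of the steps you name with explicit bounds.
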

Before we give the proof of Theorem \ref{inverseu3}, we state, for clarity, Leng's inverse $U^3$ theorem, which has the same formulation, but holds in the periodic setting.   
\begin{theorem} \cite[Theorem 8]{leng2023improvedquadratic}\label{inverseu3lengappendix}
	Let $f: \Z_N   \to \mathbb{C}$ be a function with $\Expectation_{n \in [N]} |f(n)|^{1024}   \leq 1$ and $\delta \in \left(0,\frac{1}{100}\right).$ Suppose that  \[ \left \| f \right \|_{U^3\left(\Z_N \right)} \geq \delta. \] Then there exists some absolute constant $C$, a degree two nilsequence $F(g(n)\Gamma)$ of complexity and $\|F \|_{\text{Lip}}$ bounded by $ \exp\left( O\left ( \log^C\left(\delta^{-1}\right)\right )  \right)$ on a nilmanifold of dimension $d \leq O\left ( \log^C\left( \delta^{-1} \right)\right )$ such that \[ \left| \Expectation_{n \in \mathbb{Z}_N} f(n)\overline{F(g(n)\Gamma) }      \right| \geq \exp \left( -O\left ( \log^C\left(\delta^{-1}\right)\right ) \right). \]
\end{theorem}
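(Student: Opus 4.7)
The plan is to establish the cyclic quasipolynomial $U^3$ inverse theorem by running the classical Gowers symmetry / additive-combinatorics route, but inserting at each step the sharpest quantitative estimates available so that every constant remains of quasipolynomial form $\exp(O(\log^C(\delta^{-1})))$. Throughout, I would accommodate the $L^{1024}$-bounded hypothesis (rather than $L^\infty$) by truncating $f = f\mathbf{1}_{|f| \leq K} + f\mathbf{1}_{|f| > K}$ for $K = \exp(\log^{C'}(\delta^{-1}))$; the tail has $L^2$ mass at most $K^{-510}$ by Chebyshev, which is absorbed into the final error, so it suffices to treat the bounded truncation.

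The first move is to exploit the identity $\|f\|_{U^3(\mathbb{Z}_N)}^8 = \Expectation_h \|\triangle_h f\|_{U^2(\mathbb{Z}_N)}^4$ and pigeonhole on the dyadic size of $\|\triangle_h f\|_{U^2}$, extracting a set $H \subseteq \mathbb{Z}_N$ of density $\gtrsim \delta^{O(1)}$ on which $\|\triangle_h f\|_{U^2} \gtrsim \delta^{O(1)}$. The elementary $U^2$ inverse theorem $\|g\|_{U^2}^4 = \sum_\xi |\widehat g(\xi)|^4$, together with one more pigeonhole, produces a ``large frequency'' map $\xi \colon H \to \mathbb{Z}_N$ for which $|\widehat{\triangle_h f}(\xi(h))| \gtrsim \delta^{O(1)} N$ at every $h \in H$.

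Next I would implement Gowers' symmetry argument: by expanding $\widehat{\triangle_h f}(\xi(h))$, applying Cauchy-Schwarz, and reordering sums, one shows that the number of additive quadruples $(h_1,h_2,h_3,h_4) \in H^4$ satisfying $h_1+h_2=h_3+h_4$ and $\xi(h_1)+\xi(h_2)=\xi(h_3)+\xi(h_4)$ is $\gtrsim \delta^{O(1)} N^3$. The Balog-Szemer\'edi-Gowers theorem upgrades this to a large subset $H' \subseteq H$ on which $\xi$ restricts to a genuine Freiman homomorphism of small doubling. At this point the main technical engine is deployed: Sanders' quasipolynomial Bogolyubov-Ruzsa theorem produces a regular Bohr set $B(S,\rho) \subseteq H' - H'$ of rank $|S| \lesssim \log^{O(1)}(\delta^{-1})$ and radius $\rho \geq \exp(-\log^{O(1)}(\delta^{-1}))$, on which the Freiman homomorphism $\xi$ extends to a bracket-linear form $\xi(h) \equiv \alpha h + \lfloor \beta h \rfloor \gamma + \cdots$ modulo a controlled error.

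The hardest step, and the one on which the exponent $C$ ultimately depends, is converting this bracket-linear behavior of $\xi$ into a genuine polynomial sequence $g \colon \mathbb{Z} \to G$ on a $2$-step nilpotent Lie group $G/\Gamma$ of dimension $O(\log^C(\delta^{-1}))$ and Mal'cev complexity $\exp(O(\log^C(\delta^{-1})))$, via the Chu-Vandermonde integration procedure of Green-Tao-Ziegler as refined by Leng. The associated vertical character $F(g(n)\Gamma)$, with Lipschitz constant of the prescribed size, then inherits a large correlation with $f$ by summing the local $U^2$-correlations $\widehat{\triangle_h f}(\xi(h))$ over $h \in H$ and recognizing the resulting expression as $\Expectation_n f(n) \overline{F(g(n)\Gamma)}$ up to acceptable losses. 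The delicate point throughout is that each of the three classical polynomial losses (BSG, Bogolyubov-Ruzsa, and bracket-phase integration) must be replaced by quasipolynomial losses: Sanders' theorem handles the sumset step, a careful variant of BSG handles the first, and Leng's efficient integration scheme is what keeps the final nilmanifold dimension at $\log^{O(1)}(\delta^{-1})$ rather than polynomial in $\delta^{-1}$, which is the principal obstacle a more routine approach would fail to overcome.
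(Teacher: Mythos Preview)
The paper does not give a proof of this statement at all: Theorem~\ref{inverseu3lengappendix} is quoted verbatim as \cite[Theorem 8]{leng2023improvedquadratic} and used as a black box. The only argument the paper supplies in this vicinity is the short transference from $\mathbb{Z}_N$ to $[N]$ (the proof of Theorem~\ref{inverseu3} from Theorem~\ref{inverseu3lengappendix}), which is a separate and much easier matter.

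Your proposal is therefore not competing with anything in the paper; rather, you are sketching the content of Leng's paper itself. As a high-level outline of that proof your sketch is broadly on target: the route through $\|f\|_{U^3}^8 = \Expectation_h \|\triangle_h f\|_{U^2}^4$, extraction of large Fourier frequencies $\xi(h)$, Gowers' symmetry argument on additive quadruples, Balog--Szemer\'edi--Gowers, Sanders' quasipolynomial Bogolyubov--Ruzsa, and then integration of the resulting bracket-linear phase into a degree-$2$ nilsequence is indeed the structural skeleton. Your truncation to handle the $L^{1024}$ hypothesis is also a reasonable reduction. That said, you should be aware that the integration step --- passing from an approximately linear $\xi$ on a Bohr set to a bona fide polynomial sequence on a $2$-step nilmanifold of controlled dimension and complexity --- is by far the deepest part, and ``Chu--Vandermonde integration procedure of Green--Tao--Ziegler as refined by Leng'' is doing an enormous amount of work in your sketch; this is essentially the entirety of Leng's contribution and is not something one can plausibly reconstruct without consulting that paper in detail. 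As a proof \emph{plan} your outline is correct; as a proof it is a table of contents for \cite{leng2023improvedquadratic}.
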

The proof of Theorem \ref{inverseu3} follows from the above by standard arguments; for completeness we provide a brief proof.
		\begin{proof}[Proof of Theorem \ref{inverseu3} Assuming Theorem \ref{inverseu3lengappendix}]
			Let $f: [N] \to \mathbb{C} $ be as in the premise of the Theorem \ref{inverseu3} and let $p $ be a prime in the interval $[8N,16N]$. Using Lemma \cite[Lemma B.5]{MR2651575}, see also \cite[Appendix A]{frantzikinakishostmultiplicative}, for $\Z_p$  we obtain for the function $\tilde{f}: \Z_p \to \mathbb{C}$ defined as  $\tilde{f} \coloneqq  f \cic{1}_{[N]}$, where $[N]$ is identified with a subset of $\Z_p$, with the same $L^{1024}$ norm. As $f$ has $\left \| \tilde{f} \right \|_{U^3(\Z_p )} \geq \delta' \coloneqq  c\delta $, we may apply Theorem \ref{inverseu3lengappendix} to obtain a degree two nilsequence $F(g(n)\Gamma)$ of complexity $\exp\left( O\left ( \log^C\left(\delta'^{-1}\right)\right )  \right)=\exp\left( O\left ( \log^C\left(\delta^{-1}\right)\right )  \right)$ on a nilmanifold of dimension $d \leq O\left ( \log^C\left( \delta'^{-1} \right)\right )=O\left ( \log^C\left( \delta^{-1} \right)\right )$ such that \[ \left| \left  \langle \tilde{f}(\cdot ),F(g(\cdot)\Gamma)  \right \rangle_{\Z_p}      \right| \geq \exp \left( -O\left ( \log^C\left(\delta'^{-1}\right)\right ) \right)=\exp \left( -O\left ( \log^C\left(\delta^{-1}\right)\right ) \right). \] For the interpretation of the inner product we identify $F(g(n)\Gamma)$ with its periodization of the first $N$ terms, as in \cite[Page 135]{greentao2008u3}, namely we identify $F(g(n)\Gamma)$ with the sequence $c(n)$ defined by the formula \[c(n) \coloneqq \sum_{k \in \Z} \cic{1}_{[k,k+N)}F(g(n+k)\Gamma). \] Hence, the inner product in Theorem \ref{inverseu3lengappendix} may be interpreted as the inner product with respect to the uniform measure on $[N].$ Therefore, by considering the definition of $\tilde{f}$, we obtain  the conclusion of Theorem \ref{inverseu3}.
		\end{proof}

We are now prepared to prove Lemma \ref{l:siegelcomp}.
\begin{proof}[Proof of Lemma \ref{l:siegelcomp}]
	
	We proceed by contradiction. We pick $\mathbf{c} \ll c_1 \ll 1$. Let $\delta   \coloneqq  \exp(- (\log N)^{c})$.  
	Assume that
	\begin{align}
		\|\Lambda-\Lambda'_{\leq Q}\|_{U^3([N])} \geq \delta  ;
	\end{align}
	We eventually show that   
	\begin{align}\label{e:overprog}
		 \bigl|\Expectation_{n \in \mathcal{P}} \big( \Lambda-\Lambda'_{\leq Q}\big)(n) \bigr| \gg \exp(-(\log \delta  ^{-1})^{O(1)}) \gtrsim \exp(-(\log N)^{1/20}). 
	\end{align}
	where $\mathcal P$ is an arithmetic progression with gap size $\exp((\log N)^{1/20})$. But, due to  Lemma \ref{l:APest},  the left side is at most 
	\begin{equation}
		\exp\bigl( -c(\log N)^{1/10} \bigr).
	\end{equation}
	  And so we will have a contradiction, completing the proof. The proof contains several steps. 
	Most, if not all, steps will introduce quasi-polynomial constants of the form $\exp ( (\log 1/\delta)^C)$. 
	All of them can be absorbed by taking $0< \mathbf{c}
    \ll 1$ sufficiently small. 
	
	\bigskip
	
	
	
	\textit{\textbf{Step 1:} Apply the quantitative  uniform norm inverse theorem.} Unlike the Cram\'{e}r model, $\Lambda_{\leq Q}'$ can take large values. Still, on average one has
	\begin{align}
		 \Expectation_{n \in [N]} |\Lambda'_{\leq Q}(n)|^{1024} \lesssim \Expectation_{n \in [N]} |\Lambda_{\leq Q}(n)|^{1024} \lesssim (\log Q)^{2^{1024} + 1024}
	\end{align}
	see \cite{KMTT}*{Lemma 4.6}. 
	Assume that 
	\begin{align}
		\|(\Lambda-\Lambda'_{\leq Q}) \|_{U^3[N]} > \delta,
	\end{align}
	and apply Theorem \ref{inverseu3} to conclude that 
	$\Lambda-\Lambda'_{\leq Q}$ must correlate with 
	a structured function which we now describe:
	\begin{itemize}
		\item There exist large enough constants $C_2,C_3,C_4>0$; 
		\item A nilmanifold $G/\Gamma$ of  degree $2$, dimension $D\leq (\log \delta^{-1})^{C_2}$, and  complexity $M \leq \exp((\log \delta^{-1})^{C_3})$;
		\item A Lipschitz function $F:G/\Gamma\to \mathbb{C}$ with $\|F \|_{\text{Lip}} \leq \exp((\log \delta^{-1})^{C_3})$
	\end{itemize}
	such that
	\begin{align} \label{e:LSSoutput}
		|\Expectation_{n<N} (\Lambda-\Lambda'_{\leq Q}) (n) F(g(n)\Gamma)| > \exp(-(\log \delta^{-1})^{C_4})  .
	\end{align} 
	
	\bigskip

	\textit{\textbf{Step 2:} Reducing to a $1-$dimensional vertical character.}    Next we use \cite{leng2023efficient}*{Lemma A.6} to Fourier expand the vertical torus. For every $S-$rational subgroup $H\subseteq G$, 
	there exists $C_5\gg C_3,C_4$ such that 
	\begin{align}
		F(g(n)\Gamma) = \sum_{|\xi| \leq  S^{D^{O(1)}}\exp((\log \delta^{-1})^{C_5})} F_{\xi}(g(n)\Gamma) + O(\exp(-\log(\delta^{-1})^{C_4})),
	\end{align}
	where $F_{\xi}$ is an $H-$vertical character with Lipschitz norm $S^{D^{O(1)}}\exp((\log \delta^{-1})^{C_5})$. In particular, by taking $H=G_{(2)}$ (the largest possible subgroup living inside the center), we can choose $S= M^{D^{O(1)}}$ for free \cite{leng2023efficient}*{Remark following Lemma A.6}. Straightforward computation shows that there exists $C_6\geq C_5,C_4$ such that 
	\begin{align}
		F(g(n)\Gamma) = \sum_{|\xi| \leq  \exp((\log \delta^{-1})^{C_6})} F_{\xi}(g(n)\Gamma) +O(\exp(-\log(\delta^{-1})^{C_4})).
	\end{align}
	Substituting this in \eqref{e:LSSoutput} and applying the triangle inequality yields the lower bound
	\begin{align}
		\sum_{|\xi| \leq  \exp((\log \delta^{-1})^{C_6})}\bigl|\Expectation_{n<N} (\Lambda-\Lambda'_{\leq Q}) (n) F_{\xi}(g(n)\Gamma)\bigr| > \exp(-(\log \delta^{-1})^{C_6}).
	\end{align}
	So, by the pigeonhole principle, there exists $C_7 \geq C_6$, a frequency  $\xi_0$ with $|\xi_0|<\exp((\log \delta^{-1})^{C_7})$ invoking a vertical character $\tilde{F} \coloneqq F_{\xi_0}$ with \[ \| F_{\xi_0} \|_{\text{Lip}} \lesssim \exp((\log \delta^{-1})^{C_7}) \] such that 
	\begin{align}
		\bigl|\Expectation_{n<N} (\Lambda-\Lambda'_{\leq Q}) (n) \tilde{F}(g(n)\Gamma)\bigr| > \exp(-(\log \delta^{-1})^{C_7}).  
	\end{align}
	Next we apply  \cite{leng2023efficient}*{Lemma 2.2} to change
	$C_7$ to a possibly larger constant $C_8\geq C_7$ and assume that $\bar{F}$ is  a one dimensional vertical character  with frequency $\xi$ with  $|\xi|\leq \exp((\log \delta^{-1})^{C_8})$, and  
	\begin{align}\label{e:verdim1}
		\bigl|\Expectation_{n<N} (\Lambda-\Lambda'_{\leq Q}) (n) \bar{F}(g(n)\Gamma)\bigr| > \exp(-(\log \delta^{-1})^{C_
			8}).  
	\end{align}

	\textit{\textbf{Step 3:} Discarding the case when $q_{\textup{Siegel}}$ is large. } Assume that $q_{\textup{Siegel}}\gg \exp((\log \delta^{-1})^{O(C_8)})$; we can use the estimate \eqref{e;SiegelToShow} below to estimate
	\begin{align} 
		\|\Lambda'_{\leq Q}-\Lambda_{\leq Q}\|_{U^3[N]} \lesssim q_{\textup{Siegel}}^{-1/18} \lesssim \exp((\log \delta^{-1})^{-O(C_8)}).
	\end{align}
	By the converse of the inverse theorem \cite{leng2024quasipolynomial}*{Lemma B.5}, we conclude that  for every $s$-step  nilsequence $F(g(n)\Gamma)$ described above, and in particular, $\bar{F}(g(n)\Gamma)$, we have 
	\begin{align}
		|\Expectation_{n<N} (\Lambda'_{\leq Q}-\Lambda_{\leq Q}) (n) \bar{F}(g(n)\Gamma)| \lesssim  \exp(-(\log \delta^{-1})^{O(C_8)/O(1)}) \lesssim \exp(-(\log \delta^{-1})^{O(C_8)}).
	\end{align}
	This and \eqref{e:verdim1} then imply that 
	\begin{align}\label{e:beforetypes}
		|\Expectation_{n<N} (\Lambda-\Lambda_{\leq Q}) (n) \bar{F}(g(n)\Gamma)| \gtrsim  \exp(-(\log \delta^{-1})^{C_8}),
	\end{align}
	which reduces to the case with no Siegel zero consideration; in other word as we  we will see below in Step 5, there is no  twisted type I sums in representing $\Lambda -\Lambda_{\leq Q}$. So we only care about the case when $q_{\textup{Siegel}}\lesssim \exp((\log \delta^{-1})^{O(C_8)})$.
	
	\bigskip
	
	\textit{\textbf{Step 4:} Decomposing $\Lambda$ and $\Lambda'_{\leq Q}$ into type I, twisted type I, and type II sums.}  We say that a sequence $a_d$ is \emph{divisor-bounded}, if $a_d$ obey the upper bound, 
    \[ a_d \lesssim \tau(d)^{O(1)} (\log N)^{O(1)}.\] 
	Recall from \cite{vaughan2003hardy}*{Chapter 3}   that a type I sum is of the form 
	\begin{align}
		n\mapsto \sum_{d \leq N^{2/3}} a_d \mathbf{1}_{d | n} \mathbf{1}_{[N']}(n),
	\end{align}
	where $a_d$ is a divisor-bounded and $N'\leq N$. 
	Similarly, a type II sum is of the form 
	\begin{align}
		n\mapsto\sum_{dw=n, \ d,w>N^{1/3}} a_db_w \mathbf{1}_{[N]}(n)
	\end{align}
	where $a_d,b_w$ are divisor-bounded.  
	Recently, Tao and Ter\"{a}v\"{a}inen  \cite{MR4875606}*{Proposition 7.2} defined the twisted type I sums to be of the form 
	\begin{align}
		n\mapsto \sum_{d|n,d\leq N^{2/3}} a_d\chi_{q_{\textup{Siegel}}}(n/d) \mathbf{1}_{N'}(n)  
	\end{align}
	where $a_d$ is a divisor-bounded sum. It is shown in \cite{vaughan2003hardy}*{Chapter 3} that $\Lambda$ can be written as the linear combination of type I and type II sums and a term with $L^{1}$ average of at most $O(\exp(-(\log N)^{1/2}))$. 
	
	We show that $\Lambda'_Q$ can also be written as a convex combination of  type I sums and twisted type I sums which are divisor bounded.
	Opening up the Heath-Brown model, we have that
	\begin{align}
		\Lambda_{\leq Q}(n) &= \sum_{q\leq Q} \frac{\mu(q)}{\phi(q)}c_q(n) = \sum_{q\leq Q} \frac{\mu(q)}{\phi(q)} \sum_{d|n} \mathbf{1}_{d|q} \mu(q/d)d \\
		&= \sum_{d|n,d\leq Q}\sum_{q\leq Q} \frac{\mu(q)}{\phi(q)}  \mathbf{1}_{d|q} \mu(q/d)d\\
		& = \sum_{d|n,d\leq Q}\frac{d\mu(d)}{\phi(d)}\sum_{\substack{q\leq Q/d \\ (d, q) = 1}} \frac{|\mu(q)|}{\phi(q)} =: \sum_{d\leq N^{2/3}}\alpha_d \mathbf{1}_{d|n}\mathbf{1}_{[N]}, 
	\end{align}
	where the bound $Q\leq N^{2/3}$ was used and 
	\begin{align}
		\alpha_d=  \cic{1}_{[Q]}(d) \frac{d \mu(d)}{\phi(d)}\sum_{\substack{q\leq Q/d \\ (d, q) = 1}} \frac{|\mu(q)|}{\phi(q)} \lesssim (\log\log d)\log Q \lesssim (\log N)^2.
	\end{align}
	Note that  
	\begin{align}
		\chi_{q_{\textup{Siegel}}}(n)\Lambda_{\leq Q}(n) = \sum_{d|n} (\alpha_d \chi_{q_{\textup{Siegel}}}(d)) \chi_{q_{\textup{Siegel}}}({n/d})\mathbf{1}_{[N]}
	\end{align}
	where $\alpha_d \chi_{q_{\textup{Siegel}}}(d)$ is divisor-bounded. Finally, by the fundamental theorem of calculus, we can treat 
	\begin{align} \label{int}
		\mathbf{1}_{[N]}(n) n^{\beta-1} = \int_1^N 1_{[M]}(n) (1-\beta) M^{\beta-2}\ dM + N^{\beta-1} 1_{[N]}(n).
	\end{align}
	So, $\Lambda'_{\leq Q}$ can be written as a convex combination of  type I sums and twisted type I sums.

	\bigskip
	
	\textit{\textbf{Step 5.} Reducing to type I, twisted type I, and type II terms.} Now that we understand the structure of   $\Lambda$ and $\Lambda'_{\leq Q}$ in terms of type I and type II sums, we use the triangle inequality and study them separately.  
	Use the decomposition from Step 4 to represent  
	\begin{align}
		\Lambda-\Lambda'_{\leq Q} = h_1(n)+h'(n)+h_2(n)+E(n),
	\end{align}
	where $h_1$ is type I sum, $h'$ is twisted type I,  $h_2$ is type II sum, and $\left \| E\right \|_{L^1[N]} \lesssim \exp(-(\log N)^{1/2})$.  From \eqref{e:beforetypes}, we know that 
	\begin{align}
		\bigl|\Expectation_{n<N} (h_1(n)+h'(n)+h_2(n)+E(n))  \bar{F}(g(n)\Gamma)\bigr| \gtrsim  \exp(-(\log \delta^{-1})^{C_8}).
	\end{align}
	We may discard the contribution of the error term, because  
    \begin{align}
    \left| \Expectation_{n <N} E(n) \bar{F}(g(n)\Gamma)  \right| \leq \left  \| \bar{F} \right \|_{\text{Lip}} \Expectation_{n<N}|E(n)|  &\lesssim  \exp( (\log \delta^{-1})^{C_8}) \exp(-(\log N)^{1/2}) \\
    &\lesssim \exp( -(\log \delta^{-1})^{C_8});
    \end{align}
    we pick $\mathbf{c}$ so that $\mathbf{c} C_8 \leq 1/200$ (say).
	Now we apply \cite{leng2023efficient}*{Proposition 5.1} to the function 
    \[ \tilde{F} \coloneqq   \exp(- \log(\delta^{-1})^{2C_8} ) \bar{F}.\]

The upshot is that at least one of 
\begin{align}\label{e:lbd00}
		\bigl|\Expectation_{n<N} h(n) \bar{F}(g(n)\Gamma)\bigr| \gtrsim  \exp(-(\log \delta^{-1})^{C_h C_8}), \; \; \; h \in \{ h_1,h',h_2\}
	\end{align}
where $C_h = 5$ if $h=h_1$ and $3$ otherwise;
we address each case in turn:
    
	If we assume that \eqref{e:lbd00} is satisfied for the type I term, we conclude that there exists $L\leq N^{2/3}$ and $N'\leq N$ such that for $L \exp(-(\log N)^{1/20})$ many $\ell\in [L/2,L]$ we have 
	\begin{align}\label{e:type1weyl}
		|\Expectation_{n<N'/\ell} \tilde{F}(g(\ell n)\Gamma)| \gtrsim  \frac{N}{N'} \exp(-(\log \delta^{-1})^{5C_8}) \gtrsim \exp(-(\log \delta^{-1})^{5C_8}) .
	\end{align}

    If we assume that \eqref{e:lbd00} is satisfied for the twisted type I term, then there exists $L\leq N^{2/3}$ and $N'\leq N$ such that for $L \exp(-(\log N)^{1/20})$ many $\ell\in [L/2,L]$ we have 
	\begin{align}\label{e:ttype1weyl}
		|\Expectation_{n<N'/\ell} \tilde{F}(g(\ell n)\Gamma)\chi_{q_{\textup{Siegel}}}(n)| \gtrsim  \exp(-(\log \delta^{-1})^{5C_8})  .
	\end{align}

	Finally, if we assume that \eqref{e:lbd00} is satisfied for the type II sum, there exists $N^{1/3}\leq L\leq N^{2/3}$  and $M\in [N\exp(-(\log \delta^{-1})^{3C_8})/L, N/L]$ such that 
	\begin{align}\label{e:type2weyl} \quad
		|\Expectation_{\ell,\ell'\in [L,2L]} \Expectation_{m,m'\in [M,2M]} \tilde{F}(g(m\ell)\Gamma)\overline{\tilde{F}(g(m\ell')\Gamma)}\overline{\tilde{F}(g(m'\ell)\Gamma)}\tilde{F}(g(m'\ell')\Gamma)| \gtrsim  \exp(-(\log \delta^{-1})^{5C_8}).
	\end{align}
	
	\textit{\textbf{Step 6:} Applying nilsequence Weyl sum technology for type I, twisted type I, and type II terms.} Now that we know at least one of \eqref{e:type1weyl}, \eqref{e:ttype1weyl}, or \eqref{e:type2weyl} must occur, we apply Weyl inverse theorem. Whichever is the case, the outcome is similar: see \cite{leng2023efficient}*{Lemma 5.2} for \eqref{e:type1weyl}, \cite{leng2023efficient}*{Lemma 5.3} for \eqref{e:ttype1weyl}, and \cite{leng2023efficient}*{Lemma 5.4} for \eqref{e:type2weyl}; we conclude that $g(n)$ has the following form
	\begin{align}
		g(n) = \epsilon(n)g_1(n)\gamma(n) 
	\end{align}
	where 
	\begin{itemize}
		\item There exists $C_{10}\geq 30C_8$ and  $1 \ll C_9 = O(1)$;
		\item $g_1$ takes values in a $\exp((\log \delta^{-1})^{5C_8})-$rational subgroup of $G$, call it $H$;
		\item The step of $H$ is at most $1$;
		\item $\gamma$ is $\exp((\log \delta^{-1})^{C_{10}})-$rational;
		\item $\epsilon$ is $(\exp((\log \delta^{-1})^{C_9}), N^{-1})$-smooth, which means that for all $n\in [N]$,
		\begin{align}
			&d(\epsilon(0),e_G) \lesssim  \exp( (\log \delta^{-1})^{C_9});\\
			&d(\epsilon(n),\epsilon(n-1)) \lesssim  \exp((\log \delta^{-1})^{C_9}) N^{-1}.
		\end{align}
	\end{itemize}
	If we substitute this into \eqref{e:beforetypes}, we may extract a (large) constant $C_{11}\geq  C_9,C_{10}$ so that 
	\begin{align}
		|\Expectation_{n<N} (\Lambda-\Lambda'_{\leq Q}) (n) \tilde{F}(\epsilon(n)g_1(n)\gamma(n)\Gamma)| > \exp(-(\log \delta^{-1})^{C_{11}}).
	\end{align}
	
	\bigskip
	
	\textit{\textbf{Step 7:} Pigeonholing.} Let $\Delta\leq \exp((\log \delta^{-1})^{2C_{10}})$ be the period of $\gamma$, so that $\gamma$ becomes constant on each residue class modulo $\Delta$. By the triangle inequality, 
	\begin{align}
		\Expectation_{u\in [\Delta]}
		\Bigl|\Expectation_{ \substack{n<N \\ n\equiv u\pmod\Delta}} \textbf{}(\Lambda-\Lambda'_{\leq Q}) (n) \bar{F}(\epsilon(n)g_1(n)\gamma(u)\Gamma)
		\Bigr|  > \exp(-(\log \delta^{-1})^{C_{11}}).
	\end{align}
	Applying the pigeonhole principle, we 
	extract  an arithmetic progression $P=\Delta[N/\Delta]+u_0$ with size $N \exp(-(\log \delta^{-1})^{C_{10}})$ and $\gamma_0=\gamma(u_0)$ such that 
	\begin{align}
		\Bigl|\Expectation_{n\in P} (\Lambda-\Lambda'_{\leq Q}) (n)\bar{F}(\epsilon(n)g_1(n)\gamma_0\Gamma)
		\Bigr| > \exp(-(\log \delta^{-1})^{C_{11}}) . 
	\end{align}
	Having held $\gamma $ constant,  we  use the 
	smoothness of $\epsilon$ to pass to a setting in which it is constant as well. 
	Decompose $P$ into $\{ P_m \}$, where each $P_m$ is the intersection of $P$ with consecutive intervals of length $|P|\exp(-(\log \delta^{-1})^{3C_9})$, so that the size of each $P_m$ is at most $|P|\exp(-(\log \delta^{-1})^{3C_9})$, and apply the triangle inequality to get
	\begin{align}
		\Expectation_{m\in [\exp((\log \delta^{-1})^{3C_9})]}|\Expectation_{n\in P_m} (\Lambda-\Lambda'_{\leq Q}) (n)\bar{F}(\epsilon(n)g_1(n)\gamma_0\Gamma)| > \exp(-(\log \delta^{-1})^{C_{11}}) . 
	\end{align}
	Using the pigeonhole principle, there exists $m_0< \exp((\log \delta^{-1})^{3C_9})$ and $P_0\coloneqq P_{m_0}$ such that 
	\begin{align}
		|\Expectation_{n\in P_0} (\Lambda-\Lambda'_{\leq Q}) (n)\bar{F}(\epsilon(n)g_1(n)\gamma_0\Gamma)| > \exp(-(\log \delta^{-1})^{C_{11}}).
	\end{align}
	We know that for every $x,y\in P_0$ we have 
	\begin{align}
		d(\epsilon(x),\epsilon(y)) \lesssim \sum_{n_i\in P_{0}}d(\epsilon(n_i),\epsilon(n_{i-1})) \lesssim |P_0| N^{-1} \exp((\log \delta^{-1})^{C_9}) \lesssim \exp(-(\log \delta^{-1})^{C_9}),
	\end{align}
	so we can assume that $\epsilon$ is also a constant on $P_0$ with size at most $N \exp(-(\log \delta^{-1})^{2C_{10}}-(\log \delta^{-1})^{3C_9})$. Now, choose $C_{12}$ to be larger than $10(C_{11}+C_9)$; the pigeonhole principle implies that there exists a sub-progression $P_1\subseteq P_0$ of length at most $N \exp(-(\log \delta^{-1})^{C_{12}})$ and $\epsilon_0\in G$ such that 
	\begin{align}\label{e:pigeonhole}
		|\Expectation_{n\in P_1} (\Lambda-\Lambda'_{\leq Q}) (n)\bar{F}(\epsilon_0g_1(n)\gamma_0\Gamma)| > \exp(-(\log \delta^{-1})^{C_{12}}).
	\end{align}
	
	\bigskip
	
	\textit{\textbf{Step 8:} Concluding the argument.}
	By \cite{leng2023efficient}*{Lemma B.6}, we can represent $\gamma_0=\{\gamma_0\}\gamma_1$ where $\gamma_1\in \Gamma$ and $\psi(\{\gamma_0\}) \in [-\frac{1}{2},\frac{1}{2})^d$, where $\psi$ is the Mal'cev coordinate map, see Definition \ref{d:nilmalcevdef}. Writing $g_2(n) = \{\gamma_0\}^{-1}g_1(n)\{\gamma_0\}$, we have 
	$$\epsilon_0 g_1(n)\gamma_0 \Gamma= \epsilon_0\{\gamma_0\} \left(\{\gamma_0\}^{-1}g_1(n)\{\gamma_0\}\right) \gamma_1 \Gamma = \epsilon_0\{\gamma_0\} g_2(n)  \Gamma.$$
	Define $F_1(t)= \bar{F}(\epsilon_0 \{\gamma_0\}^{-1} t)$. Since $\psi(\{\gamma_0\})$ and $d(\epsilon_0,e_G) \lesssim \exp(-(\log \delta^{-1})^{2C_{12}})$,  the Lipschitz norm of $F_1$ is at most
	\begin{align}
		\lesssim \exp((\log \delta^{-1})^{10C_9}+(\log \delta^{-1})^{C_6}) \lesssim \exp((\log \delta^{-1})^{C_{12}}).
	\end{align}
	Hence, \eqref{e:pigeonhole} reduces to  
	\begin{align}
		|\Expectation_{n\in P} (\Lambda-\Lambda'_{\leq Q}) F_1(g_2(n)\Gamma)| \gtrsim  \exp(-(\log \delta^{-1})^{C_{12}}) . 
	\end{align}
	Note that $g_2$ still takes values  inside a subgroup of $G$ with step at most $1$ and $F_1$ is defined in the corresponding step-$1$ nilmanifold. Applying the above argument possibly once more, we end up with a progression $P_2$ with size $N e^{-(\log \delta^{-1})^{C_{13}}}$ for some $C_{13}\geq 10^4C_{12}$  so that
	\begin{align}
		|\Expectation_{n\in P_2} (\Lambda-\Lambda'_{\leq Q})(n)| \gtrsim  \exp(-(\log \delta^{-1})^{C_{13}}).
	\end{align}
	Recall that $\delta = \exp(- (\log N)^\mathbf{c}) $ where we at last specialize $\mathbf{c}$ to be so small that 
	\begin{align}
		\exp(-(\log \delta^{-1})^{C_{13}}) =\exp(-(\log N)^{\mathbf{c}C_{13}}) \gtrsim  \exp(-(\log N)^{-1/20}),
	\end{align}
	yielding \eqref{e:overprog}; the proof is complete.   
\end{proof}

\bigskip 

{We are now ready for the proof of Proposition \ref{p:logsavings}.
With Lemma \ref{l:siegelcomp} in mind, we  need only to show that $\Lambda_{ \leq Q}$ and $\Lambda'_{ \leq Q}$ are close in $U^3$ norm.   
But, these two differ only by the Siegel zero component, if it exists.  Namely,  we show that  
\begin{align} \label{e;SiegelToShow}
	\|\Lambda_{ \leq Q}(n) \cdot n^{\sigma-1}\chi_{q_{\textup{Siegel} }}(n)\|_{U^3([N])} \lesssim N^{\frac{\sigma-1}{2}} q_{\textup{Siegel}}^{-1/18} .
\end{align}
The location of the Siegel zero  $\sigma =\sigma _Q $ can be very close to $1$.  And so, the principal subcase is when $\sigma =1$, and we address it in the following Lemma.   }

\begin{lemma}\label{l:U3siegel}
	\label{l;sigma=1} We have the estimate below, valid for $M\geq Q^{100} $ and $\epsilon >0$. 
	\begin{align} \label{e;sigma=1}
		\|\Lambda_{ \leq Q}(n)\chi_{q_{\textup{Siegel}}}(n)\|_{U^3[M]} \lesssim q_{\textup{Siegel}}^{-1/16+\epsilon}.
	\end{align}
\end{lemma}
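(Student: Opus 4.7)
The plan is to adapt the combinatorial argument of Proposition~\ref{p:Jan} to incorporate the character twist, using the Gauss sum normalization of $\chi_{q_*}$ (writing $q_* \coloneqq q_{\textup{Siegel}}$ throughout) to gain an additional factor of $q_*^{-1/2}$ inside the eighth power of the $U^3$-norm; taking the eighth root then yields the target exponent $-1/16$. Concretely, I would establish the refined dyadic estimate $\|\Lambda_{Q'} \chi_{q_*}\|_{U^3[M]} \lesssim_\epsilon (Q')^{-3/8+\epsilon} q_*^{-1/16+\epsilon}$ uniformly in dyadic $Q' \le Q$, and sum dyadically using the convergent geometric series $\sum_{Q'} (Q')^{-3/8}$.

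The first step is the Gauss sum expansion $\chi_{q_*}(n) = \frac{\tau(\chi_{q_*})}{q_*}\sum_{(r, q_*)=1}\chi_{q_*}(r)\,e(rn/q_*)$, which is valid because $\chi_{q_*}$ is a real primitive character and $|\tau(\chi_{q_*})| = \sqrt{q_*}$. Substituting into $\Lambda_{Q'}\chi_{q_*}(n)$ yields a weighted sum of exponentials $e(n(r/q_* - a/q))$ together with a global prefactor $|\tau(\chi_{q_*})/q_*|^8 = q_*^{-4}$ appearing upon expansion of the eighth power. I would then expand $\|\Lambda_{Q'}\chi_{q_*}\mathbf{1}_{[M]}\|_{U^3(\Z)}^8$ in exact parallel to Proposition~\ref{p:Jan}, producing an 8-fold sum indexed by $(a_\omega, q_\omega, r_\omega)_{\omega \in \{0,1\}^3}$ with modified linear forms
\[
L_i = \sum_{\omega \in F_i} (-1)^{|\omega|}\Bigl(\frac{r_\omega}{q_*} - \frac{a_\omega}{q_\omega}\Bigr), \qquad i = 0, 1, 2, 3,
\]
where $F_0 = \{0,1\}^3$ is the whole cube and $F_1,F_2,F_3$ are its coordinate faces $\{\omega_i=1\}$. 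Tuples with some $L_i \notin \Z$ contribute negligibly by the geometric-sum argument of Proposition~\ref{p:Jan}, absorbed by the size hypothesis $M \ge Q^{100}$.

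On the remaining diagonal contribution I would focus on the generic case $(q_\omega, q_*) = 1$ for all $\omega$; the non-coprime cases are handled separately and their contribution is negligible owing to the size separation $q_* \gtrsim (\log N)^2 \gg Q$. In the coprime case the conditions $L_i \in \Z$ decouple into rational constraints $L_i((a_\omega/q_\omega)_\omega) \in \Z$ producing the factor $(Q')^{-3+\epsilon}$ exactly as in Proposition~\ref{p:Jan}, together with congruences $\sum_{\omega \in F_i}(-1)^{|\omega|} r_\omega \equiv 0 \pmod{q_*}$ for $i = 0, 1, 2, 3$. These four congruences on $(\Z/q_*)^8$ cut out a four-dimensional subspace, parameterizable by the corner variables $r_{000}, r_{100}, r_{010}, r_{001}$. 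The crucial task is to bound the character sum
\[
S \coloneqq \sum_{\substack{(r_\omega) \in (\Z/q_*)^8 \\ L_i \in \Z, \ i=0,1,2,3}} \prod_\omega \chi_{q_*}(r_\omega)
\]
by $O_\epsilon(q_*^{7/2+\epsilon})$, a saving of $q_*^{1/2}$ beyond the trivial bound $q_*^4$. Under the parameterization this becomes $\chi_{q_*}$ applied to a product of eight linear forms in four free variables; fixing three parameters and applying the Weil bound to the inner sum over the fourth yields $O(\sqrt{q_*})$-cancellation, and the outer three sums contribute $q_*^3$ trivially.

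Assembling all of the above, the eighth power is bounded by $q_*^{-4}\cdot (Q')^{-3+\epsilon}\cdot q_*^{7/2+\epsilon} = (Q')^{-3+\epsilon} q_*^{-1/2+\epsilon}$; taking eighth roots and summing dyadically in $Q'$ delivers \eqref{e;sigma=1}. The principal obstacle is verifying the non-degeneracy hypothesis for the Weil bound: one must combinatorially argue, in analogy with Lemma~\ref{l:algorithm}, that after eliminating one free variable the resulting product of linear forms is not identically a $q_*$-th power (or otherwise trivially structured), so that genuine square-root cancellation occurs on the four-dimensional subspace. A secondary and more technical obstacle is the uniform treatment of non-coprime cases $(q_\omega, q_*) > 1$; these should be negligible due to the $q_*$ lower bound but require an extension of the divisibility analysis of Proposition~\ref{p:Jan}.
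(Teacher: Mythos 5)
Your core mechanism is the right one, and it is essentially the paper's: the $q_{\textup{Siegel}}^{-1/2+o(1)}$ saving inside the eighth power comes from cancellation of $\prod_{\omega}\chi_{q_{\textup{Siegel}}}(x+\omega\cdot\mathbf{h})$ over the four-dimensional cube configuration mod $q_{\textup{Siegel}}$. Your character sum $S$ over the subgroup cut out by $L_0,\dots,L_3$ is exactly $q_{\textup{Siegel}}^4\|\chi_{q_{\textup{Siegel}}}\|_{U^3(\Z_{q_{\textup{Siegel}}})}^8$ (the solution set of those congruences is precisely the set of additive cubes $r_\omega=x+\omega\cdot\mathbf h$), so your Weil argument is a from-scratch proof of the estimate the paper simply cites from Tao--Ter\"av\"ainen (\cite{MR4875606}*{Lemma 5.6}). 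The routes differ in packaging: the paper exploits periodicity of the integrand (period $q_{\textup{Siegel}}\cdot\operatorname{lcm}(q_\omega)\ll M$) and a CRT factorization to split the expectation into a character piece on $[q_{\textup{Siegel}}]^4$ and a Ramanujan-sum piece, whereas you Gauss-expand $\chi_{q_{\textup{Siegel}}}$ and rerun the Proposition \ref{p:Jan} bookkeeping with extra congruences. Note also that the paper does not need (and does not prove) your extra $(Q')^{-3/8}$ decay in this lemma; it only bounds the Ramanujan-sum factor by $q_{\textup{Siegel}}^{o(1)}$, which suffices since the whole saving comes from the character.

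The genuine gap is your treatment of the non-coprime case $(q_\omega,q_{\textup{Siegel}})>1$. You dismiss it ``owing to the size separation $q_{\textup{Siegel}}\gtrsim(\log N)^2\gg Q$,'' but that inequality is backwards: here $Q=\exp((\log N)^{1/10})$ grows faster than any power of $\log N$, and by \eqref{e:sigma0} the relevant Siegel modulus satisfies $q_{\textup{Siegel}}\leq Q^{1/2}$. So $q_{\textup{Siegel}}\ll Q$, the moduli $q_\omega\leq Q$ routinely share prime factors with $q_{\textup{Siegel}}$, and these terms are neither empty nor negligible --- they contribute a genuine multiplicative factor that must be tracked (in the paper it is $[\sigma(q_{\textup{Siegel}})/\phi(q_{\textup{Siegel}})]^8=q_{\textup{Siegel}}^{o(1)}$). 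Moreover, in this case your integrality conditions $L_i\in\Z$ no longer decouple into independent constraints on $(a_\omega,q_\omega)$ and on $(r_\omega)$, so the clean product bound $q_{\textup{Siegel}}^{-4}\cdot(Q')^{-3+\epsilon}\cdot S$ breaks down. The paper resolves this by writing $q_\omega=q_\omega'q_\omega''$ with $q_\omega'=(q_\omega,q_{\textup{Siegel}})$ and observing that $c_{q_\omega'}(n+\omega\cdot\mathbf h)$ is identically $\mu(q_\omega')$ on the support of $\chi_{q_{\textup{Siegel}}}$, which reduces matters back to the coprime configuration at the cost of the harmless $q_{\textup{Siegel}}^{o(1)}$ divisor sum; you would need this (or an equivalent) step to close the argument. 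A minor further point: $q_{\textup{Siegel}}$ is a fundamental-discriminant modulus and need not be squarefree, so the prime-by-prime Weil reduction needs a word at $p=2$.
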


\begin{proof}
	We in fact prove a slightly stronger conclusion; namely, expanding the  Heath-Brown model and applying the triangle inequality  we establish that for each
	$\epsilon >0$ 
	\begin{align}
		\label{e;;U3SigelQ-1}
		\sum_{\substack{(q_{\omega})\in [Q]^{8} } }
		\prod _{\omega }
		\frac{\mu(q_{\omega}) ^2 } {\phi(q_{\omega})}
		\Bigl\lvert 
		\Expectation_{(n,\mathbf{h})\in [M]^{4}}   
		\prod_{\omega\in \{0,1\}^3} 
		\chi_{q_{\textup{Siegel} }}(n+\omega\cdot\mathbf{h})
		c_{q_{\omega}}(n+\omega\cdot \mathbf{h})  
		\Bigr\rvert  \lesssim q_{\textup{Siegel}}^{-1/16+\epsilon}.
	\end{align}
	
	We capture a simplification in the 
	expectation above. 
	Fix $(q_{\omega})\in [Q]^{8}$, and  set  
	\[ Q_1 = q_{\textup{Siegel}}  \operatorname{lcm}(q_\omega ) =:q_{\textup{Siegel}} Q_2,\] which is a period for the function inside the expectation, which is much smaller than $M \geq Q^{100}$.
	The expectation in question is 
	\begin{align}  \label{errortermonleft}
		\Expectation_{(n,\mathbf{h})\in [M]^{4}} 
		&  
		\prod_{\omega\in \{0,1\}^3} 
		\chi_{q_{\textup{Siegel} }} 
		(n+\omega\cdot\mathbf{h})
		c_{q_{\omega}}(n+\omega\cdot \mathbf{h}) 
		\\&= 
		\Expectation_{(n,\mathbf{h})\in [Q_1]^{4}} 
		\prod_{\omega\in \{0,1\}^3} 
		\chi_{q_{\textup{Siegel} }} 
		(n+\omega\cdot\mathbf{h})
		c_{q_{\omega}}(n+\omega\cdot \mathbf{h}) +O(Q_1/M);
		\\ 
		\intertext{setting $q _{\omega } = q'_\omega \cdot q''_\omega $, where $q'_\omega = (q_\omega , q_{\textup{Siegel}})$, 
			and using periodicity}
		\\ \eqref{errortermonleft} & = 
		\Expectation_{(n,\mathbf{h})\in [q_{\textup{Siegel}}]^{4}} 
		\Expectation_{ (m , \mathbf{k}) \in [Q_2]^{4} }  
		\prod_{\omega\in \{0,1\}^3} 
		\chi_{q_{\textup{Siegel} }} (n+\omega\cdot \mathbf{h}) 
		c _{ q _\omega'}(n+\omega\cdot \mathbf{h})\\ 
		& \quad \times \prod_{\omega\in \{0,1\}^3} 
		c_{q_{\omega}''}(n+mq_{\textup{Siegel}}+\omega\cdot q_{\textup{Siegel}}\mathbf{k} + \omega\cdot \mathbf{h}) +O(q_{\textup{Siegel}}^{-20}).
		\\ 
		\intertext{Noting that 
			$\prod_{\omega\in \{0,1\}^3} c _{ q _\omega'}(n+\omega\cdot \mathbf{h})$ is identically 
			$\prod_{\omega\in \{0,1\}^3} \mu (q _\omega')$ 
			on the support of 
			$\chi_{q_{\textup{Siegel} }} (n+\omega\cdot \mathbf{h}) $,} 
		\eqref{errortermonleft} & =
		\pm\Expectation_{(n,\mathbf{h})\in [q_{\textup{Siegel}}]^{4}}
		\prod_{\omega\in \{0,1\}^3} 
		\chi_{q_{\textup{Siegel} }}(n+\omega\cdot \mathbf{h}) \\
		& \quad \times 
		\Expectation_{(m,\mathbf{k})\in [Q_2]^{4}}  
		\prod_{\omega\in \{0,1\}^3} 
		c_{q_{\omega}''}(n+mq_{\textup{Siegel}}+\omega\cdot q_{\textup{Siegel}}\mathbf{k} + \omega\cdot \mathbf{h}).
		\\
		\intertext{Since $q''_\omega $ and $q_{\textup{Siegel}}$  are coprime for all $\omega \in \left\{0,1\right\}^3$, the expectation over $(m,\mathbf{k})$ can be simplified, which gives the total contribution}   
		\eqref{errortermonleft} & = 
		\pm\Expectation_{(n,\mathbf{h})\in [q_{\textup{Siegel}}]^{4}}
		\prod_{\omega\in \{0,1\}^3} 
		\chi_{q_{\textup{Siegel} }}(n+\omega\cdot \mathbf{h}) 
		\times \mathbf{E}(q _{\omega }''), 
		\\
		\intertext{where we define}
		\mathbf{E}(q _{\omega }'') 
		& \coloneqq \Expectation_{(m,\mathbf{k})\in [Q_2]^{4}}  
		\prod_{\omega\in \{0,1\}^3} 
		c_{q_{\omega}''}(m +\omega\cdot \mathbf{k}).
	\end{align}
	Here, we note that the error term on the left at the beginning of these estimates 
	\eqref{errortermonleft} is small enough, due to our lower bound on  $M$. 
	And, in the last line, the expectation in $ (m,\mathbf{k}) $ 
	is over several full periods. 
	
	With the estimate from \cite{MR4875606}*{Lemma 5.6}  of the $U^3$ norm of the Siegel character, namely, 
	\begin{align}
		\lVert \chi_{q_{\textup{Siegel} }} \rVert _{U^3[Q_1]} ^{8} 
		\ll q_{\textup{Siegel}}^{-1/2+o(1)}  
	\end{align}
	we are left to prove the estimate 
	\begin{align} \label{e;SigelToProve}
		\sum_{ \substack{ (q_\omega ) \in [Q] ^{8}}}
		\prod_{\omega\in \{0,1\}^3} 
		\frac{\mu(q_{\omega}) ^2 } {\phi(q_{\omega})}
		\mathbf{E}(q _{\omega }'')  \lesssim  
		q_{\textup{Siegel}}^{o(1)} . 
	\end{align}
	
	\smallskip 
	
	The term $\mathbf{E}( q _{\omega }'')$ is non-negative and   is estimated in Lemma \ref{l;Ep} below. So,   
	we bound the left side of \eqref{e;SigelToProve} by 
	the following expression. 
	Collect
    $D = \{d \colon d \mid q_{\textup{Siegel}} \},$
	and for $(q_\omega'' )\in [Q] ^{8}$, set  
	\[ R = R(q_\omega'') =   \prod_{\omega\in \{0,1\}^3} q_\omega'' .\]
	By Lemma \ref{l;Ep}, the left side of \eqref{e;SigelToProve} is at most  
	\begin{align} \label{e;dejavu}
		\begin{split}
			A \times B\coloneqq &\sum_{ (d_\omega ) \in D ^{8} } 
			\prod_{\omega\in \{0,1\}^3} \phi (d_\omega ) ^{-1} 
			\\ & \qquad\times 
			\sum_{ \substack{ (q_\omega'' ) \in [Q] ^{8} \\ 
					\textup{Rad}( R ) ^{4} \mid  \prod_\omega  q_\omega''  }} 
			\prod_{\omega\in \{0,1\}^3}  \mu (q_w'') ^2 
			\prod_{\substack{ p\mid R  } }\phi (p) ^{-3}  .   
		\end{split}
	\end{align}  
	
	The first term is straightforward to estimate: with $\sigma (n)$ being the sum of divisors function, it is at most
	\begin{align}
		A  \lesssim  \Bigl[ \frac {\sigma (q_{\textup{Siegel}})}
		{\phi(q_{\textup{Siegel}})} \Bigr] ^{8} = q_{\textup{Siegel}} ^{o(1)}. 
	\end{align} 
	The second term is akin to \eqref{e;dejavu}.  
	In the definition of $B$, we certainly have 
	$R \mid \textup{Rad}( R  ) ^{8}$, so   
	we bound 
	\begin{align}
		\sum_{\substack{R \in \mathbb{N} \\  \textup{Rad}(R)^4 | R | \textup{Rad}(R)^8} } 
		\phi ( \textup{Rad}(R) )^{-3} 
		& \lesssim  
		\sum_{q \in \NN : q \text{ is squarefree}} 
		5^{\omega(q)} \phi(q)^{-3} \lesssim 1, 
	\end{align}
	which follows from standard pointwise estimates on functions $\omega$ and $\phi$. This establishes \eqref{e;SigelToProve}, thereby completing the proof of Lemma \ref{l:U3siegel}. 
\end{proof}

It remains to estimate $\mathbf{E}(q _{\omega }'')$; we accomplish this below.

\begin{lemma}  \label{l;Ep}
	Let $ (q_ {\omega} )_{ \omega \in \{ 0, 1\}^3 } $ be square-free integers. Set $R=\prod_{\omega \in \left\{0,1\right\}^3}q_{\omega}$ and  $Q$ to be a large integer such that $  \operatorname{lcm}{(q_\omega ) } \mid Q $.  Then
	\begin{align} \label{expectation-of-ramanujan-products}
		0\leq 
		\Expectation_{ (m,\mathbf{j}) \in [Q] ^{4} } 
		\prod _{\omega \in \{ 0, 1\}^3 } c_{q_\omega }(m+ \omega \cdot \mathbf{j})
		\leq 
		\begin{cases}
			0  &  \textup{Rad}(R) ^{4} \nmid R 
			\\ 
			\prod _{p\mid R} (p-1)^{v_p(R)-3} & \textup{otherwise}
		\end{cases}
	\end{align}
	where $\textup{Rad}(M)$ is the radical of integer $M$ and  $v_p(M)$ is the largest power of $p$ that divides $M$. 
\end{lemma}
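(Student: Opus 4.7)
The plan is to expand the Ramanujan sums via the standard Fourier formula $c_{q_\omega}(n) = \sum_{(a_\omega,q_\omega)=1} e(-a_\omega n/q_\omega)$, swap the order of summation, and evaluate the expectation in $(m,\mathbf{j})$ first. Writing the exponent as $-m L_0(\mathbf{a}) - \sum_{i=1}^{3} j_i L_i(\mathbf{a})$ with
\[
L_0(\mathbf{a}) = \sum_{\omega \in \{0,1\}^3} \frac{a_\omega}{q_\omega}, \qquad L_i(\mathbf{a}) = \sum_{\omega \colon \omega_i = 1} \frac{a_\omega}{q_\omega},
\]
and using $\operatorname{lcm}(q_\omega) \mid Q$, the inner expectation equals $1$ when $L_0,L_1,L_2,L_3 \in \mathbb{Z}$ and $0$ otherwise. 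Hence the quantity in \eqref{expectation-of-ramanujan-products} reduces to the non-negative integer count
\[
\mathcal{N}((q_\omega)) := \#\bigl\{(a_\omega) : (a_\omega,q_\omega)=1 \text{ for all } \omega,\; L_j(\mathbf{a}) \in \mathbb{Z} \text{ for } j=0,1,2,3\bigr\},
\]
which immediately establishes the left inequality of \eqref{expectation-of-ramanujan-products}.

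Next I would decompose each $a_\omega/q_\omega$ by the Chinese Remainder Theorem as $\sum_{p \mid q_\omega} a_{\omega p}/p \pmod{1}$, where $a_{\omega p}\in\{1,\ldots,p-1\}$ exactly when $p \mid q_\omega$ (using that $q_\omega$ is squarefree), so that the conditions $L_j \in \mathbb{Z}$ split prime-by-prime. For each $p \mid R$, writing $S_p := \{\omega : p \mid q_\omega\}$, the four conditions translate into four linear congruences modulo $p$ on $(a_{\omega p})_{\omega \in S_p}$: the full sum and the three face sums over $F_{i,1}$ must vanish mod $p$. This is identical to the setup analysed in the proof of Proposition \ref{p:Jan}, except that no signs $(-1)^{|\omega|}$ appear; crucially, those signs play no role in the combinatorial arguments of that proof, which rely only on the fact that three values on a face of the cube determine the fourth via the linear congruence. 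Consequently the marking argument applies verbatim: unless every prime $p \mid R$ has $|S_p| \geq 4$ (equivalently $\operatorname{Rad}(R)^4 \mid R$), some face contains exactly one marked vertex and the corresponding constraint forces $a_{\omega p} \equiv 0 \pmod p$, a contradiction. This disposes of the case $\operatorname{Rad}(R)^4 \nmid R$ with $\mathcal{N} = 0$.

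In the remaining case, I would apply Lemma \ref{l:algorithm} independently to each prime $p \mid R$. The lemma supplies a set $T_p \subset S_p$ with $|T_p| \leq \max(1, |S_p|-4)$ whose nonzero residues determine the rest of $(a_{\omega p})_{\omega \in S_p}$ through the algorithm. Since each coordinate of $T_p$ admits at most $p-1$ choices, the number of valid tuples for prime $p$ is at most $(p-1)^{\max(1, v_p(R)-4)} \leq (p-1)^{v_p(R)-3}$. Taking the product over $p \mid R$ and using the CRT factorisation of $\mathcal{N}$ yields the claimed upper bound. The only point requiring care is checking that the algorithm in Lemma \ref{l:algorithm} is insensitive to the signs in the linear relations, which is immediate from inspection of its proof. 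No genuinely new obstacle appears — the heavy combinatorial lifting has already been carried out in Section \ref{s:Jan}, and the main conceptual content of the present lemma is simply recognising that the Ramanujan-sum expectation is exactly the count enumerated by that apparatus.
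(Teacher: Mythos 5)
Your proposal is correct and follows essentially the same route as the paper: both reduce the expectation, via orthogonality and the Chinese Remainder Theorem for squarefree moduli, to the prime-by-prime count of tuples $(a_{\omega p})$ satisfying the four face congruences, and then invoke the marking argument and Lemma \ref{l:algorithm} (noting that the $(-1)^{|\omega|}$ signs are immaterial). The only cosmetic difference is the order of operations --- the paper factors the expectation multiplicatively before expanding each $c_p$, while you expand the full $c_{q_\omega}$ first and then split the resulting integrality conditions prime by prime --- and your write-up in fact supplies the counting details that the paper leaves to the reader.
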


The argument below reduces to a counting argument very similar to Lemma \ref{l:algorithm}, with the only difference being certain $\pm 1$ sign choice, which are inessential; we just provide the reduction, and leave the details to the interested reader.

\begin{proof}
Each term in the expectation is $\operatorname{lcm}{(q_{\omega})}$ periodic, so, without loss of generality we may assume that $Q= \operatorname{lcm}{(q_{\omega})}$.	Using the multiplicative property of Ramanujan sums and the fact that $q_{\omega}$ are square-free  for $\omega \in \{0, 1 \}^3$ we have that \eqref{expectation-of-ramanujan-products} is given by
	\begin{align}
		\Expectation_{ (m,\mathbf{j}) \in [Q] ^{4} } 
	\prod _{\omega \in \{ 0, 1\}^3 } c_{q_\omega }(m+ \omega \cdot \mathbf{j})=	\Expectation_{ (m,\mathbf{j}) \in [Q] ^{4} } 
		\prod _{p\mid R} 
		\prod _{\substack{\omega \in \{0, 1 \}^3 \\ p \mid q_\omega } }
		c_{p }(m+ \omega \cdot \mathbf{j}). 
	\end{align}
	Observe that since $Q$ is square free, we may decompose $\Z_{Q} \equiv \bigoplus_{p \mid Q} \Z_{p} $ which implies that
	\begin{align}
		\Expectation_{ (m,\mathbf{j}) \in [Q] ^{4} } 
		\prod _{p\mid R} 
		\prod _{\substack{\omega \in \{0, 1 \}^3 \\ p \mid q_\omega } }
		c_{p }(m+ \omega \cdot \mathbf{j})	= 
		\prod _{p\mid R}
		\Expectation_{ (m,\mathbf{j}) \in [p] ^{4} } 
		\prod _{\substack{\omega \in \{0, 1 \}^3 \\ p \mid q_\omega } }
		c_{p }(m+ \omega \cdot \mathbf{j})
	\end{align}

	Expand each $c_p(v + \omega\cdot \mathbf{k}) = \sum_{1\leq a_\omega<p } e(a_\omega (v + \omega\cdot \mathbf{k})/p) $, so that   
	\begin{align}
		\Expectation_{(m,\mathbf{k})\in [p]^{4}}
		\prod_{\substack{\omega\in  \{0, 1\}^3 \\ p \mid q_{\omega} } }
		c_{p}(m + \omega\cdot \mathbf{k}) 
		& =  \Expectation_{(m,\mathbf{k})\in [p]^{4}}
		\prod_{\substack{\omega\in  \{0, 1\}^3 \\ p \mid q_{\omega} } }
		\sum _{ 1\leq a_\omega  <p} e( a_ \omega (v + \omega\cdot \mathbf{k})/p)
		\\ 
		& =
		\Expectation_{(m,\mathbf{k})\in [p]^{4}}
		\sum_{1 \leq a_{\omega} <p }
		e\Bigl( \sum_{\substack{\omega\in  \{0, 1\}^3 \\ p |q_\omega  } }
		a_\omega (m + \omega\cdot \mathbf{k})/p\Bigr) 
		\\   \label{e;4rows}
		& = 
	\sum_{ \omega \in \left\{0,1\right\}^3, \;  p \mid q_{\omega} }		\sum_{ 1\leq a_\omega <p  }
		\mathbf{1}_{ p\mid \sum_{\omega \in \{0, 1\}^3  } a_\omega  } 
		\times 
		\prod _{j=1}^3 
		\mathbf{1}_{ p\mid \sum_{\substack{\omega \in \{0, 1\}^3  
					\\ \omega _j =1  } } a_\omega.  } 
	\end{align}

\end{proof}

To complete the proof of  Proposition \ref{p:logsavings}, 
it remains to prove \eqref{e;SiegelToShow}.  This in turn follows from this Lemma and the elementary bound
\begin{align}
    \| \Lambda_{\leq Q} \|_{U^3([\sqrt{N}])} \lesssim \| \Lambda_{\leq Q} \|_{L^2([\sqrt{N}])}.
\end{align}

	\begin{lemma} For all $A \geq 1$
	 \[ \left \| \Lambda_{ \leq Q}(n) \left(n^{\sigma-1} \chi_{q_{\textup{Siegel}}}(n)\right) \cic{1}_{\left(\sqrt{N},N\right]} \right \|_{U^3[N]} \lesssim N^{\frac{\sigma-1}{2}} q_{\textup{Siegel}}^{-1/16+\epsilon}  \lesssim_{A} (\log N)^{-A}  \]
	\end{lemma}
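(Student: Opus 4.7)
The plan is to strip off the slowly-varying weight $n^{\sigma-1}$ by an integration-by-parts representation, reducing the problem to iterated applications of Lemma \ref{l:U3siegel}. A naive approach of pointwise bounding $n^{\sigma-1}\le N^{(\sigma-1)/2}$ on $(\sqrt N, N]$ and pulling it out of the norm would fail: the $U^3$ norm is not a lattice norm, so an $L^\infty$ extraction destroys exactly the cancellation responsible for the $q_{\textup{Siegel}}^{-1/16+\epsilon}$ saving in Lemma \ref{l:U3siegel}.

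First, I would use the fundamental theorem of calculus applied to $t\mapsto t^{\sigma-1}$ to write, for $n\in(\sqrt N,N]$,
\[
n^{\sigma-1}\,\mathbf{1}_{(\sqrt N,N]}(n) \;=\; (1-\sigma)\!\int_{\sqrt N}^N t^{\sigma-2}\,\mathbf{1}_{(\sqrt N,t]}(n)\,dt \;+\; N^{\sigma-1}\,\mathbf{1}_{(\sqrt N,N]}(n),
\]
which is a \emph{positive} superposition of indicator weights. Multiplying by $g := \Lambda_{\leq Q}\chi_{q_{\textup{Siegel}}}$ and applying the triangle inequality in $\|\cdot\|_{U^3[N]}$ gives
\[
\|F\|_{U^3[N]} \;\le\; (1-\sigma)\!\int_{\sqrt N}^N t^{\sigma-2}\,\bigl\|g\,\mathbf{1}_{(\sqrt N,t]}\bigr\|_{U^3[N]}\,dt \;+\; N^{\sigma-1}\,\bigl\|g\,\mathbf{1}_{(\sqrt N,N]}\bigr\|_{U^3[N]}.
\]

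Next, for each $t\in[\sqrt N,N]$, I would decompose $\mathbf{1}_{(\sqrt N,t]}=\mathbf{1}_{[t]}-\mathbf{1}_{[\sqrt N]}$ and use the normalization identity $\|f\|_{U^3[N]}^{8}\approx (M/N)^{4}\|f\|_{U^3[M]}^{8}$ for $f$ supported in $[M]$. Since $Q=\exp((\log N)^{1/10})$, for $N$ large we have $\sqrt N \ge Q^{100}$, so Lemma \ref{l:U3siegel} applies to both $[t]$ and $[\sqrt N]$, yielding
\[
\bigl\|g\,\mathbf{1}_{(\sqrt N,t]}\bigr\|_{U^3[N]} \;\lesssim\; (t/N)^{1/2}\,q_{\textup{Siegel}}^{-1/16+\epsilon}.
\]
Substituting and evaluating $\int_{\sqrt N}^N t^{\sigma-3/2}\,dt \le \frac{2}{\sigma-1/2}N^{\sigma-1/2}$ for $\sigma$ close to $1$ collapses everything to
\[
\|F\|_{U^3[N]} \;\lesssim\; q_{\textup{Siegel}}^{-1/16+\epsilon}\,N^{\sigma-1} \;\le\; q_{\textup{Siegel}}^{-1/16+\epsilon}\,N^{(\sigma-1)/2},
\]
using $\sigma<1$ in the last step; this is the first inequality of the lemma.

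For the log-savings $\lesssim_A (\log N)^{-A}$, I would run the standard tradeoff against the Siegel lower bound \eqref{e;whereIsSigma}: choose a large constant $C=C(A)$. If $q_{\textup{Siegel}}\ge(\log N)^C$, then $q_{\textup{Siegel}}^{-1/16+\epsilon}$ is already $\lesssim (\log N)^{-A}$ for $\epsilon=\epsilon(A,C)$ sufficiently small; otherwise $q_{\textup{Siegel}}<(\log N)^C$ forces $1-\sigma\ge c_\epsilon\,q_{\textup{Siegel}}^{-\epsilon}\gtrsim (\log N)^{-C\epsilon}$, whence $N^{(\sigma-1)/2}\le \exp(-c(\log N)^{1-C\epsilon})$, which beats any polynomial power of $\log N$ once $C\epsilon<1$. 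The main obstacle is the conceptual point in the first paragraph: finding a way to separate the weight $n^{\sigma-1}$ from the product $\Lambda_{\leq Q}\chi_{q_{\textup{Siegel}}}$ inside the $U^3$ norm without invoking absolute values, which would annihilate the Siegel cancellation; the IBP decomposition into positive indicator weights is what makes the triangle inequality compatible with the oscillatory content of Lemma \ref{l:U3siegel}.
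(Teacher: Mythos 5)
Your proof is correct and follows essentially the same route as the paper: the fundamental-theorem-of-calculus decomposition of $n^{\sigma-1}\mathbf{1}_{(\sqrt N,N]}$ into a positive superposition of indicator weights, reduction to Lemma \ref{l:U3siegel} at scales $\geq \sqrt N\geq Q^{100}$, and the concluding dichotomy on the size of $q_{\textup{Siegel}}$ against the Siegel bound \eqref{e;whereIsSigma}. The only (immaterial) difference is that you dispose of the integral term via Minkowski's integral inequality for the $U^3$ norm, whereas the paper expands the eighth power into an $8$-fold integral over cutoffs $M_{\omega}$ and applies the Gowers--Cauchy--Schwarz inequality; both give the same bound.
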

	\begin{proof}
		We write \[ n^{\sigma-1} \cic{1}_{\left(\sqrt{N},N\right]}(n)=\int_{\sqrt{N}}^N  (1-\sigma) M^{\sigma-2} \cic{1}_{[M]}(n) \d M+N^{\sigma-1} \cic{1}_{[N]}(n)-(\sqrt{N})^{\sigma-1} \cic{1}_{[\sqrt{N}]}(n);  \] due to the subadditivity of $U^3[N]$ and Lemma \ref{l;sigma=1} it suffices to prove the estimate \[ \left \| \int_{\sqrt{N}}^{N} \Lambda_{ \leq Q}(n) M^{\sigma-2} \cic{1}_{[M]}(n) \chi_{q_{\textup{Siegel}}}(n) \d M \right \|_{U^3[N]}^8 \lesssim N^{4(\sigma-1)}  q_{\textup{Siegel}}^{-1/2+8\epsilon}. \] 
		Equivalently, we need to prove that \[  \frac{1}{N^4}\sum_{x,h_1,h_2,h_3 \in \Z}  \Delta_{h_1,h_2,h_3} \left( \int_{\sqrt{N}}^{N} \Lambda_{ \leq Q}(n) M^{\sigma-2} \cic{1}_{[M]}(n) \chi_{q_{\textup{Siegel}}}(n)  \d M  \right)(x)  \lesssim N^{4(\sigma-1)} q_{\textup{Siegel}}^{-1/2+8\epsilon};    \] the LHS of the aforementioned estimate is equal to   \begin{equation} \label{averagedgowers}
			\begin{split}
		&	\qquad  \frac{1}{N^4} \sum_{(x,\mathbf{h}) \in \Z^4} \int_{(\sqrt{N},N]^8} \prod_{\omega \in \left\{0,1\right\}^3}M_{\omega}^{\sigma-2} C^{|\omega|} \left(\Lambda_{ \leq Q}(x+ \omega \cdot \mathbf{h})\chi_{q_{\textup{Siegel}}}(x+ \omega \cdot \mathbf{h}) \cic{1}_{[M_{\omega}]}(x+\omega \cdot \mathbf{h})\right)   \d \mathcal{M} \\ & = \frac{1}{N^4} \int_{(\sqrt{N},N]^8} \prod_{\omega \in \left\{0,1\right\}^3}M_{\omega}^{\sigma-2}   \sum_{(x,\mathbf{h}) \in \Z^4} \prod_{\omega \in \left\{0,1\right\}^3} C^{|\omega|}\left(\Lambda_{ \leq Q}(x+ \omega \cdot \mathbf{h}) \chi_{q_{\textup{Siegel}}}(x+ \omega \cdot \mathbf{h}) \cic{1}_{[M_{\omega}]}(x+\omega \cdot \mathbf{h})\right)  \d \mathcal{M}.
		\end{split}
		\end{equation} 
        Where $\d \mathcal{M}=\bigotimes_{\omega \in \left\{0,1\right\}^3} \d M_{\omega}. $
		At this point we are able to use the Gowers-Cauchy-Schwarz Inequality \ref{CauchySchwarzGowers} to obtain that  for each fixed $\left\{M_{\omega}\right\}^{\left \{0,1\right \}^3} \in  \left(\sqrt{N},N\right]^8 $ we have that \[ \begin{split}
			 \sum_{(x,\mathbf{h}) \in \Z^4} \prod_{\omega \in \left\{0,1\right\}^3} C^{|\omega|}\left(\Lambda_{ \leq Q}(x+ \omega \cdot \mathbf{h}) \cic{1}_{[M_{\omega}]}(x+\omega \cdot \mathbf{h})\right)  &\leq  \prod_{\omega \in \left\{0,1\right\}^3} \left \| \Lambda_{ \leq Q} \chi_{q_{\textup{Siegel}}} \cic{1}_{[M_{\omega}]}  \right \|_{U^3(\Z)} \\ & \lesssim \prod_{\omega \in \left\{0,1\right\}^3}  M_{\omega}^{1/2} \left \| \Lambda_{ \leq Q} \chi_{q_{\textup{Siegel}}}  \right \|_{U^3([M_{\omega}])} \\ & \lesssim q_{\textup{Siegel}}^{-8/16+8\epsilon}   \prod_{\omega \in \left\{0,1\right\}^3}  M_{\omega}^{1/2}. 
		\end{split} \] 
Using this estimate and the fact that $M_{\omega} \geq Q^{100}$ we see that \eqref{averagedgowers} is controlled by \[\begin{split}
	 \frac{q_{\textup{Siegel}}^{-1/2+8\epsilon}}{N^4}\left( \int_{(\sqrt{N},N]} M^{\sigma-3/2}  \d M\right)^{8} &  = \frac{q_{\textup{Siegel}}^{-1/2+8\epsilon}}{N^4} \left( \frac{1}{\sigma-\frac{1}{2}} \right)^8  \left( N^{\sigma-1/2}-(\sqrt{N})^{\sigma-1/2} \right)^8 \\ & \lesssim N^{8(\sigma-1)}  q_{\textup{Siegel}}^{-1/2+8\epsilon}.
\end{split} \]  To establish our desired bound, 	 \[ \left \| \Lambda_{ \leq Q}(n) \left(n^{\sigma-1} \chi_{q_{\textup{Siegel}}}(n)\right) \cic{1}_{\left(\sqrt{N},N\right]} \right \|_{U^3[N]} \lesssim  N^{\frac{1}{2}(\sigma-1)} q_{\textup{Siegel}}^{-1/16+\epsilon},   \] we separate according to whether $q_{\textup{Siegel}} \geq (\log N)^{32A}$, in which case the bound  clearly presents. But, when $q_{\textup{Siegel}} \leq (\log N)^{32A}$ we have that \[N^{\frac{\sigma-1}{2} }  \leq (N^{-C_{A}/q_{\textup{Siegel}}^{1/64A}})^{1/2} \leq  \exp\left (- \frac{C_A}{2} \left (\log N \right )^{1/2}\right )\] from which the desired bound is achieved.
	\end{proof} 
	
Having developed our requisite additive-combinatorial machinery, we can quickly prove our main results.

\section{Proof of the Wiener-Wintner Theorem}

We phrased our Wiener-Wintner result as a uniform 
result over all continuous functions $\phi \colon \mathbb{R}/\mathbb Z \to \mathbb C$. 
By the Weierstrass approximation theorem, 
it suffices to prove the Wiener-Wintner Theorem for exponential functions. That is, we show that 
there is set $X_f\subset X$ of full measure, so that for each $x\in X_f$,  
\begin{align}  
  \lim_{N\to\infty}   
  \Expectation_{[N]} \Lambda(n) e^{2\pi i kn \theta} f(T^{n} x) 
   \qquad\textup{exists for all $k\in \mathbb{N}$.}  
\end{align}

If $f$ is an eigenfunction of $T$, the result follows from Vinogradov's Theorem on the pointwise convergence of exponential prime averages.  Due to the Prime Ergodic Theorem \cite{MR995574}, we see that the same result holds for any function in the $L^2$ closure of eigenfunctions of $T$. 

Thus, we can assume that $f \in L^{\infty}(X)$  is weakly mixing; we will show that in this case
\begin{align}
    \limsup_N \sup_\theta |\Expectation_{[N]} \Lambda(n) e^{2\pi i kn \theta} f(T^{n} x)| = 0
\end{align}
almost surely. Seeking a contradiction, suppose there existed some $c' > 0$ so that
\begin{align}
    \int_X    \limsup_N \sup_\theta |\Expectation_{[N]} \Lambda(n) e^{2\pi i kn \theta} f(T^{n} x)| \ d\mu(x) > c';
\end{align}
by the ergodic decomposition, there is no loss of generality in assuming that $(X,\mu,T)$ is ergodic.

The crucial observation is as follows: since $f$ is weakly mixing, the  uniform Wiener-Wintner Theorem holds, 
as given in  \eqref{e;uniformWW}; but, since the average in \eqref{e;weaklyMixing} is over positive quantities, we see that 
if $f$ is weakly mixing with respect to $T$, it is weakly mixing with respect to $T^k$ for any integer $k$.  
Accordingly, we can fix a single full measure subset 
$X_f\subset X$ so that for all $x\in X_f$  we have 
\begin{align}  
  \lim_N  
  \sup_\theta \Bigl\lvert 
  \Expectation_{[N]} e^{2\pi i n \theta} f(T^{kn} x)
  \Bigr\rvert =0,  \qquad k\in \mathbb{N}. 
\end{align}

With this understanding, we set up a transference argument.  
For integers $K$ and $N_0$, and $0<\kappa < 1/4$,  consider the set 
\begin{align}
    X_{\kappa} \coloneqq  \bigl\{  x \in X \colon  \max_{ 1\leq  k \leq K}\sup_{\theta} \sup_{N \geq N_0}  |\mathbb{E}_{[N]} f(T^{ k n} x) e(n\theta)| \leq \kappa\bigr\}.
\end{align}
We can choose $N_0$ so large that $\mu (X_{ \kappa}) \geq 1- \kappa$.  
And then, by the Pointwise Ergodic Theorem, we can fix $J$ so large that  $\mu(X'_\kappa) > 1- 2\kappa$, where 
\begin{equation}
  X'_{\kappa} \coloneqq \Bigl\{  x\in X \colon     \Expectation_{j \in [J]} T^j \mathbf{1}_{X_{\kappa}} (x)  > 1- 2\kappa 
  \Bigr\}. 
\end{equation}
Then, as $f$ is a bounded function, for $x\in X'_\kappa$,  we certainly have 
\begin{equation}
   \Expectation_{j \in [J]}
    \max_{ 1\leq  k \leq K}\sup_{\theta} \sup_{N \geq N_0}  |\mathbb{E}_{[N]} f(T^{ k n +j} x) e(n\theta)|^4 < 10\kappa. 
\end{equation}
Above, we have imposed an $L^4$ norm. 
From the Lemma below, after appropriately choosing $\kappa$, $K$, $N_0$ and $J$,  we conclude that again with $x\in X'_\kappa$, 
\begin{equation}
   \Expectation_{j \in [J]}
  \sup_{\theta} \sup_{N_0 < N < J}  |\mathbb{E}_{[N]} \Lambda (n) f(T^{ k n +j} x) e(n\theta)|^4 \lesssim \kappa ^{1/4}. 
\end{equation}
We integrate this over $X$, and again appeal to the Pointwise Ergodic Theorem to see that the Prime Wiener-Wintner Theorem holds.


\begin{lemma}\label{l;WW}
Let $f \colon [-2J,2J]  \to \mathbb{C}$ be a $1$-bounded function. 
Further assume that for some $0<\epsilon <1$, 
and integers $K\approx  2 ^{ \epsilon^{-100} } < N_0 ^{1/5} <J$,  we have the inequality 
\begin{align} \label{e;UniformK}
  \Expectation_{[J]} 
  \sup_{\theta} \sup_{N > N_0}
\bigl\lvert \Expectation_{n\in [N]}  e(\theta n) f(x-k n)  \bigr\rvert  ^4   < \epsilon^4 . 
\end{align}
Then, the following estimate holds for the prime averages $A_N ^{\theta } $,   for all integers $J > N_0$,  
\begin{align} \label{e;WWconconclusion}
\Expectation_{x\in [J]} \max_{ 
 N_0\leq N  <  J} \sup_{\theta } \lvert A_N ^{\theta} f(x) \rvert ^4 \lesssim \epsilon. 
\end{align}
\end{lemma}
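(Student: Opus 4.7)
The plan is to employ the Heath-Brown decomposition of $\Lambda$ at scale $Q_{\ast} := \exp((\log N)^{1/10})$, further split dyadically at an intermediate threshold $Q_1 \approx \epsilon^{-3/4}$, and to treat the three resulting pieces by distinct arguments. Writing $\Lambda = (\Lambda - \Lambda_{\leq Q_{\ast}}) + \sum_{Q_1 < Q \leq Q_{\ast}} \Lambda_Q + \Lambda_{\leq Q_1}$ and substituting into $A_N^{\theta} f(x) = R_N + L_N + S_N$, it suffices by the triangle inequality to bound each of $\|\max_N \sup_{\theta} |R_N|\|_{L^4}$, $\|\max_N \sup_{\theta} |L_N|\|_{L^4}$, and $\|\max_N \sup_{\theta} |S_N|\|_{L^4}$ by $\lesssim \epsilon^{1/4}$.

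For $R_N$, Proposition \ref{p:logsavings} gives $\|\Lambda - \Lambda_{\leq Q_{\ast}}\|_{U^3[N]} \lesssim_A (\log N)^{-A}$ for any $A$; combined with the Gowers--Wiener-Wintner inequality \eqref{e;U3control} --- linearized in $\sup_{\theta}$ via a measurable selection of the maximizer --- this controls $\Expectation_x \sup_{\theta} |R_N|^4$ at each scale. A dyadic decomposition in $N \in [N_0, J]$ costs an $O(\log J)$ factor that is absorbed by taking $A$ large, since $\log N_0 \gg \epsilon^{-100}$ by assumption.

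For $L_N$, Proposition \ref{p:Jan} yields $\|\Lambda_Q\|_{U^3[N]} \lesssim Q^{-3/8+o(1)}$ whenever $N \geq Q^{20}$ (valid for $Q \leq Q_{\ast}$ and $N \geq N_0$), and inserting into \eqref{e;U3control} gives $\Expectation_x \sup_{\theta} |L_N^Q|^4 \lesssim Q^{-3/2+o(1)}$ per scale; summing geometrically over dyadic $Q > Q_1$ and over dyadic $N$ produces an $L^4$-bound of order $(\log J)^{1/4} Q_1^{-3/8+o(1)}$. For $S_N$, I would expand $\Lambda_Q = \sum_{q \sim Q} \tfrac{\mu(q)}{\phi(q)} c_q$ and, for each $q$, split the $n$-sum into residue classes $n = qm + r$ with $0 \leq r < q$; the inner average reassembles as $\Expectation_m e(q\theta m) f((x-r) - qm)$, which is exactly the linear Wiener-Wintner average controlled by the hypothesis \eqref{e;UniformK} with stride $k = q \leq Q_1 \leq K$. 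Using $|c_q(r)| \leq \phi(q)$ and Minkowski's inequality, each $q$ contributes $\lesssim \epsilon$ in $L^4_x$; summing over squarefree $q \leq Q_1$ yields $\lesssim Q_1 \epsilon$. Choosing $Q_1 \approx \epsilon^{-3/4}$ balances the two nontrivial pieces at order $\epsilon^{1/4}$.

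The principal obstacle I expect is the interaction between the $\log J$ cost of the dyadic decomposition in $N$ for the intermediate piece and the $Q_1 \epsilon$ bound from the small piece: the naive optimization forces $\log J \lesssim \epsilon^{-1/8}$, which the hypothesis alone does not guarantee. Absorbing this will likely require either a maximal strengthening of \eqref{e;U3control} that removes the $\log J$ factor or a more delicate, $J$-dependent choice of $Q_1 = Q_1(J,\epsilon)$. A secondary technicality is that after the residue-class splitting, the inner average over $m$ runs over $[N/q]$, so the hypothesis's threshold $M > N_0$ becomes $N > qN_0$; the boundary strip $N \in [N_0, qN_0]$ should be handled by invoking the hypothesis at the smaller threshold $N_0/Q_1$, which is compatible with the setup since $Q_1 < K < N_0^{1/5}$.
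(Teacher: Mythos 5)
Your overall architecture matches the paper's: Heath-Brown decomposition, Proposition \ref{p:logsavings} plus \eqref{e;U3control} for the tail, the fixed-complexity bound of Proposition \ref{p:Jan} for intermediate $Q$, and the uniformity hypothesis \eqref{e;UniformK} for small $Q$ via residue classes. But the obstacle you flag at the end is a genuine gap, not a technicality, and your two suggested remedies are not what closes it. For the intermediate piece you sum the bound $Q^{-3/2+o(1)}$ over \emph{all} dyadic (or lacunary) scales $N\in[N_0,J]$, which costs an unbounded factor $\log J$; no choice of $Q_1=Q_1(J,\epsilon)$ can fix this, because your small-$Q$ argument loses a multiplicative factor $Q_1$ (you sum over individual moduli $q\le Q_1$ with $|c_q|\le\phi(q)$ term by term), so $Q_1$ is pinned at a fixed power of $\epsilon^{-1}$ while $J$ is arbitrary. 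The missing idea is that for each fixed $Q$ the $U^3$ bound should only be asked to handle the \emph{bounded} range $Q^{20}\le N\le 4^Q$: there are only $O(\epsilon^{-1/3}Q)$ lacunary scales there, and $Q\cdot Q^{-3/2+o(1)}\lesssim Q^{-1/4}$ sums acceptably over $Q\gg\epsilon^{-100}$ (this is exactly where the exponent $3/8>1/4$ is indispensable). For $N>4^Q$ one instead exploits that $\Lambda_Q$ is periodic with period $P_Q=\operatorname{lcm}(q\approx Q)\le 3^Q$: the average at scale $N$ is dominated by a Hardy--Littlewood maximal function, along the progression of step $P_Q$, applied to the local quantities $\sup_\theta|L^\theta_{Q,P_Q}(f\mathbf 1_{3I})|$ on a tiling by intervals $I$ of length $P_Q$, and the $L^4$ bound at the single scale $P_Q$ transfers to all larger $N$ with no $\log J$ loss.

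Two further points. First, your treatment of the small-$Q$ piece is less efficient than necessary: rather than summing over individual $q$ (losing a factor $Q_1$), the paper uses the \emph{common} period $P_Q$ and controls $\Expectation_{m\in[P_Q]}|\Lambda_Q(m)|$ by $(1+\log Q)^{O(1)}$ via the moment bounds \eqref{e;LQk}, so the threshold can be taken all the way up to $3^Q\le K\approx 2^{\epsilon^{-100}}$, matching the point where the $U^3$ argument takes over. Second, your fix for the boundary strip $N\in[N_0,qN_0]$ --- ``invoking the hypothesis at the smaller threshold $N_0/Q_1$'' --- is not available: the hypothesis is given only with the fixed threshold $N_0$. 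The correct accounting is the $O(P_Q/N_0)$ error terms, which is why the lemma assumes $K<N_0^{1/5}$.
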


\begin{proof}
Observe that it suffices to control the maximum over integers $N = \lceil (1+ \epsilon^{1/3})^k\rceil$, for integers $k\in \mathbb{N}$.  This is because for  $N=\lceil (1+ \epsilon^{1/3})^k\rceil < N'< \lceil (1+ \epsilon^{1/3})^{k+1}\rceil$, we have 
\begin{align} \label{e;WW1}
    \lvert   \tfrac N{N'} A_N ^{\theta} f(x) 
    - A_{N'} ^{\theta} f(x)\rvert 
    & \leq 
    \frac{1}{N'} \sum_{ n= N+1}^{N'} \Lambda (n) \lesssim \epsilon^{1/3}. 
\end{align}
Above, $N > N_0 \simeq 2^{\epsilon^{-100}}$, and we only need the trivial bound $\Lambda(n)\leq \log n$.  
Below, all integers $N$ will be of the form $\lceil (1+ \epsilon^{1/3})^k\rceil$, but this is supressed in the notation. 

\smallskip 

We replace the von Mangoldt function with the Heath-Brown model.  
Define 
\[
 B_N^\theta g(x) \coloneqq  \Expectation_N g(x-n) e(n \theta) \Lambda_{ \leq N'}(n),  \qquad  N' \coloneqq  \exp( (\log N)^{1/10}), 
 \]
 where $\Lambda_{ \leq N'}$ is defined in \eqref{e:lambda000}. 
 Recall
 Proposition \ref{p:logsavings}, our estimate of the 
 $U^3$ distance between the von Mangoldt function and the Heath-Brown model.  
 And, the inequality \eqref{e;U3control}, which controls averages against weights with small $U^3$ norm.  
 Employing both, we have
\begin{align}  \label{e;WW2}
\sum_{ N_0 < N < J }
\Expectation_{[J]} \lvert (B_{N} ^{\theta } - A _{N} ^{\theta }) f \rvert ^{4}
& \lesssim 
\sum_{ N_0 < N < J } (\log N) ^{-40}  \lesssim  \epsilon^{-1/3} (\log N_0)^{-20}. 
\end{align}
We analyze the averages with respect to $B_N ^{\theta}$ below. 

\smallskip 
 With $\Lambda_Q$  as in \eqref{e:lambdaQ},  namely that part of the Heath-Brown model with rational denominators approximately $Q$, define
\begin{align}
    L_{Q,N}^\theta g(x) \coloneqq  \Expectation_N g(x-n) e(n \theta) \Lambda_{Q}(n). 
\end{align}
The Ramanujan sum $c_q$ is $q$ periodic. 
Thus, $ \Lambda_Q(n)$ is periodic with period  $ P_Q\coloneqq  \text{lcm}(q \approx Q) $, an essential property for us; recall the bounds
\[ 2^Q \leq P_Q \leq 3^Q.\]
There are two bounds we prove for the averages associated with these terms, according to the relative size of $3^Q$ and $K$:

In the first place, when $3^Q \leq K$, we appeal to the  uniform Wiener-Wintner property; in particular, our key uniformity hypothesis \eqref{e;UniformK} is in place. 
For $N > N_0 \gg K^5$, we express
 \begin{align}
  L_{Q,N}^\theta f (x) 
  & = \Expectation_{ m \in [P_Q]} \Lambda _Q(m) e(\theta m)
  \times 
  \Expectation_{ n\in [N/P_Q]} e(P_Q\theta n) f(x+m+nP_Q) 
  +O(P_Q/N) 
  \\& \ll  P_Q/N_0   + 
  \Expectation_{ m \in [P_Q]} \lvert \Lambda _Q  (m) \rvert   
   \times  \max_{ 1\leq k \leq K} \sup_{\theta} \sup_{N > N_0}
\bigl\lvert \Expectation_{n\in [N]}  e(\theta n) f(x-k n)  \bigr\rvert. 
 \end{align}
To bound the $L^1$ norm of $\Lambda_Q$,  we use the estimate \cite{KMTT}*{(4.7)}, which gives control of integer powers of $\Lambda_Q$. It is 
\begin{align}
    \label{e;LQk} 
    \Expectation_{[N]} \lvert \Lambda_Q (n)|^k \lesssim (1+\log Q)^{2^k+k} , \qquad k\in \mathbb{N}. 
\end{align}
We conclude that 
\begin{align}
   \sum_{Q \colon P_Q \leq K } \Expectation_{x\in [J]} \max_{ 
 N_0\leq N  <  J} \sup_{\theta } \lvert L_{Q,N}^\theta f(x) \rvert ^4 
 &\lesssim
 \sum_{Q \colon P_Q \leq K } P_Q/N_0   +  (1 +\log Q) ^3  \epsilon^4  \lesssim \epsilon . 
  \label{e;WW3} 
\end{align}
The sum is over $Q$ being powers of $2$, and it uses our assumption that $K\approx  2 ^{ \epsilon^{-100} } < N_0 ^{1/5}$.

\medskip

It remains to address $Q$ such that $3^Q > K \approx 2 ^{\epsilon^{-100} } $, that is $ Q \gg \epsilon^{-100}$. 
Our argument has two stages. First, we control a restricted maximum over $N$. 
Appealing to the $U^3$ control 
\eqref{e;U3control} and the fixed complexity estimate 
\eqref{e:LambdaQ}, we have 
\begin{align}
\Expectation_{x\in[ J]} \max_{\substack{  Q ^{20}\leq N  <  4^Q  }}
 \sup_{\theta } \lvert L_{Q,N}^\theta  (f)(x) \rvert ^4
 & \leq 
 \sum_{\substack{ Q ^{20}\leq N  <   4^Q  }}
 \Expectation_{x\in [J]}   
 \sup_{\theta } \lvert L_{Q,N}^\theta  f(x) \rvert ^4
 \\ 
 & \lesssim  
  \sum_{\substack{  Q ^{20}\leq N  <  4^Q }} Q^{-\frac{5}{4}}  
\\  & \lesssim  \epsilon^{-1/3} Q ^{-1/4} 
\\ & \lesssim \epsilon.   
\end{align}


In addition, our maximum over $N$ goes up to $4^Q$, well beyond the period of $\Lambda_Q$. 
With an additional argument, we then have a bound with  maximum over $N$ being as large as possible. Namely,  
\begin{equation}  \label{e;LQ<}
\Expectation_{x\in [J]}  \sup_ \theta  \max_{4^Q < N < J}
 \lvert L_{Q,N}^\theta f (x) \rvert ^{4} 
 \lesssim   \epsilon.  
\end{equation} 
The point is that we are forming averages with respect to a weight that is periodic, with period $P_Q$. 
  Let $\mathcal I$ be collection of pairwise disjoint intervals  of  length  $P_Q$. 
   Require that the union of the intervals in  $ \mathcal{I}$  covers $[J]$, and that $\mathcal{I}$ has  minimal cardinality. 
  Set $\Phi = \sum_{I\in \mathcal I} \phi_I$, where 
  \begin{equation}
      \phi_I =  
 \sup_{\theta } \bigl\lvert L_{Q,P_Q}^\theta  (f \mathbf 1_{ 3 I })\bigr\rvert
  \end{equation}
 We certainly have  
\begin{equation}  \label{e;maxUpToJ}
    \Expectation_{[J]} \lvert M' \Phi \rvert ^4 
    \lesssim \Expectation_{[J]} \lvert  \Phi \rvert ^4 \lesssim   \epsilon, 
\end{equation}
where $M'$ is the Hardy Littlewood maximal function, evaluated along a progression of step size $P_Q$. 
To be explicit, 
\begin{align}
    M' g(x) = \sup_{t \geq 1} t ^{-1}\sum_{s=0}^{t-1} \lvert g(x+tP_Q) \rvert . 
\end{align} 
The function $\Phi$ is introduced for this reason:  
For any point $x\in [J]$, and $N$ a multiple of $P_Q$, we have 
$\lvert L_{Q,N}^\theta f(x)\rvert \leq M' \Phi (x)$.   
But the averages  we need to control require  $N > 4^Q$, with $N$   of the form $\lceil (1+\epsilon^{1/3})^k \rceil$, for integers $k$.  We have 
\begin{align}
    \Expectation_{x\in [J]}  \sup_ \theta  \max_{ 4^Q< N < J}
 \lvert L_{Q,N}^\theta f (x) \rvert ^{4} 
 &\lesssim 
 \Expectation_{x\in [J]}  (M' \Phi)^4 + 
 (3/4)^Q\sup_{ K \colon \lvert K \rvert \leq P_Q} \Bigl[\Expectation_{K} \lvert \Lambda_Q (n)|^4
 \Bigr] ^{1/4} 
 \\
     &\lesssim \epsilon . 
\end{align}
The last line depends upon \eqref{e;LQk}.   
 This completes the proof.  
\end{proof}

\appendix

	\appendix
	
	\section{Approximation of the von Mangoldt Function in Arithmetic Progressions}

In this section we provide a  proof that $ \Lambda_{\leq Q}'$, 
defined in  \eqref{e:Lambdaleq}, imitates the behaviour of the von Mangoldt function on quasipolynomially dense arithmetic progressions:
throughout this section we regard our scale, $N$, as fixed.  We recall the definition 
\[ Q \coloneqq  Q_N \coloneqq  \exp((\log N)^{1/10})\] 
and $q_{\textup{Siegel}}\leq Q^{1/2}$, see \eqref{e;whereIsSigma} and definitions after that.  Specifically, we have the following lemma.
{\begin{lemma}\label{l:APest}
		Suppose $\mathcal{P} \subset [N]$ is an arithmetic progression with gap size $ q \leq Q^{\frac{1}{2}}.$
		Then 
		\begin{align}\label{e:APest} 
			|\sum_{n \in \mathcal{P}} \, \big( \Lambda - \Lambda_{\leq Q}'\big) (n)| \lesssim N \exp(-c (\log N)^{1/10});
		\end{align}
        compare to \eqref{e:overprog}.
\end{lemma}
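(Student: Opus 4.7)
\textbf{Proof proposal for Lemma \ref{l:APest}.}

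The plan is to evaluate $\sum_{n \in \mathcal{P}} \Lambda(n)$ and $\sum_{n \in \mathcal{P}} \Lambda_{\leq Q}'(n)$ asymptotically, separately, and observe that the main terms plus any Siegel-zero contributions cancel, leaving an error of the stated size. Write $\mathcal{P}=\{n\leq N: n\equiv a \pmod q\}$ with $q\leq Q^{1/2}$. If $\gcd(a,q)>1$, then the support of $\Lambda$ on $\mathcal{P}$ consists only of prime powers of the primes dividing $q$, and in particular $\sum_{n\in\mathcal{P}}\Lambda(n)\ll (\log N)^2$; an analogous trivial bound for the model side follows from expanding Ramanujan sums and using that the relevant inner sum over $n\in\mathcal{P}$ vanishes unless the denominator is compatible with $q$. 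Thus one is reduced to $\gcd(a,q)=1$.

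For the prime side, the scale $q\leq Q^{1/2}=\exp(\tfrac12(\log N)^{1/10})$ lies well inside the Korobov--Vinogradov zero-free region (which reaches to $\exp(c(\log N)^{3/5}/(\log\log N)^{1/5})$). The standard explicit formula with all low-lying zeros of the $L$-functions $L(s,\chi)$ for $\chi \pmod q$ removed then yields
\[
\sum_{\substack{n\leq N\\ n\equiv a\pmod q}}\Lambda(n)\;=\;\frac{N}{\phi(q)}\;-\;\mathbf{1}_{q_{\textup{Siegel}}\mid q}\;\bar\chi_{q_{\textup{Siegel}}}(a)\;\frac{N^{\sigma}}{\sigma\,\phi(q)}\;+\;O\!\big(N\exp(-c(\log N)^{1/2})\big),
\]
where only the possible exceptional Siegel zero $\sigma=\sigma_{q_{\textup{Siegel}}}$ contributes a secondary main term, and only when $q_{\textup{Siegel}}\mid q$; see \cite{iwaniec}*{Ch.~5} or \cite{MR4875606}*{\S2}.

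For the model side, expand using $c_d(n)=\sum_{e\mid\gcd(d,n)}e\,\mu(d/e)$. For square-free $d$ and $(a,q)=1$ a direct calculation (in which one observes that $e\mid n$ and $n\equiv a\pmod q$ force $(e,q)=1$) gives
\[
\sum_{\substack{n\leq N\\ n\equiv a\pmod q}}c_d(n)\;=\;\frac{N\,\mu(d)}{q}\,\mathbf{1}_{d\mid q}\;+\;O(d).
\]
Summing against the weight $\mu(d)/\phi(d)$ for $d\leq Q$ and invoking the classical identity $\sum_{d\mid q}\mu(d)^2/\phi(d)=q/\phi(q)$ produces
\[
\sum_{\substack{n\leq N\\ n\equiv a\pmod q}}\Lambda_{\leq Q}(n)\;=\;\frac{N}{\phi(q)}\;+\;O(Q^2).
\]
For the twisted correction one runs the same expansion after partial summation against the smooth weight $n^{\sigma-1}$ on dyadic subintervals. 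When $q_{\textup{Siegel}}\mid q$, the character $\chi_{q_{\textup{Siegel}}}$ is \emph{constant} on $\mathcal{P}$ (equal to $\chi_{q_{\textup{Siegel}}}(a)$), and the calculation delivers exactly $\chi_{q_{\textup{Siegel}}}(a)\cdot N^{\sigma}/(\sigma\phi(q))$, matching the Siegel term from the explicit formula. When $q_{\textup{Siegel}}\nmid q$, there is no Siegel term on the prime side, and on the model side the non-trivial character sum $\sum_{n\equiv a(q)}\chi_{q_{\textup{Siegel}}}(n)c_d(n)$ splits (via CRT on the compositum of $q$ and $q_{\textup{Siegel}}$) into a product of character sums in which the contribution from $\chi_{q_{\textup{Siegel}}}$ is orthogonal to the trivial character modulo $q_{\textup{Siegel}}$, producing the required cancellation.

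Subtracting, all main terms and Siegel-zero contributions cancel, and the remaining error is dominated by $O(Q^2)+O(N\exp(-c(\log N)^{1/2}))\ll N\exp(-c(\log N)^{1/10})$, as required. The step I expect to demand the most care is the verification of the matching Siegel-zero contribution: namely, showing that the particular form of the correction factor $1-n^{\sigma-1}\chi_{q_{\textup{Siegel}}}(n)$ produces exactly the same coefficient as appears in the explicit formula, uniformly in $q\leq Q^{1/2}$ regardless of whether $q_{\textup{Siegel}}$ divides $q$. This is precisely the property for which the correction was built into \eqref{e:Lambdaleq}, but confirming it requires bookkeeping of the Dirichlet character modulo the combined modulus.
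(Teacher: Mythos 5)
Your proposal is correct and follows essentially the same route as the paper's proof: Page's theorem (the paper cites \cite{iwaniec2021analytic}*{Theorem 5.27}) for the prime side, direct expansion of the Ramanujan sums $c_d(n)=\sum_{e\mid(d,n)}e\,\mu(d/e)$ together with the identity $\sum_{d\mid q}\mu(d)^2/\phi(d)=q/\phi(q)$ for the model side, and the fundamental theorem of calculus (your partial summation) plus orthogonality of $\chi_{q_{\textup{Siegel}}}$ over progressions to match the Siegel-zero contributions. The only cosmetic differences are that the paper first disposes of large moduli by a trivial bound and Cauchy--Schwarz rather than appealing to a zero-free region uniformly, and that it spells out the case $q_{\textup{Siegel}}\nmid q$ with $\gcd(d,q_{\textup{Siegel}})>1$ (where $\chi_{q_{\textup{Siegel}}}$ vanishes on the relevant residue) which your ``orthogonality'' phrasing compresses.
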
}

\begin{proof}
    Any arithmetic progression $\mathcal{P} \subset [N]$ can be expressed in the form 
    \[ \{ N'' < n \leq N' : n \equiv a \pmod q \}\] for some $a \in \{1, 2, \ldots, q \}$ and $0 < N'' \leq N' \leq N$. By the triangle inequality, it thus suffices to establish the bounds:
    \begin{align}
        \sum_{\substack{n \leq N' \\ n \equiv a \pmod q}} \Lambda(n) = \sum_{\substack{n \leq N' \\ n \equiv a \pmod q}} \Lambda_{\leq Q}'\big (n) + O(N \exp(-c \log^{1/10} N))
    \end{align}
    for any $a \in \{1, 2, \ldots, q\}$ and $0 < N' \leq N$.
    We first need to exclude the case when $q > \exp(\log^{1/9}N)$ so that Page theorem becomes non-trivial. But, in this case, the left-hand side of above equation is of order $O(N \exp(-\log^{1/9} N) \log N)$ by bounding $\Lambda(n)\leq \log N$ trivially. For the sum in the right hand side, we apply the Cauchy-Schwartz to conclude that 
    \begin{align}
        |\sum_{\substack{n \leq N' \\ n \equiv a \pmod q}} \Lambda_{\leq Q}'\big (n)| \lesssim \left(\frac{N'}{q}\right)^{1/2} \left(\sum_{\substack{n \leq N' }} |\Lambda_{\leq Q}\big (n)|^2\right)^{1/2}, 
    \end{align}
    where we used the pointwise bound $|\Lambda_{\leq Q}'|\leq 2 |\Lambda_{\leq Q}|$. Using \cite[(4.7)]{KMTT}, we have 
    \begin{align}
        |\sum_{\substack{n \leq N' \\ n \equiv a \pmod q}} \Lambda_{\leq Q}'\big (n)| \lesssim N' (\log Q)^{3} q^{-1/2} \lesssim N' \exp(- \frac{1}{3}(\log N)^{1/9}). 
    \end{align}
    So, it suffices to consider the case where $\mathcal{P}$ has a comparatively small gap size, $q \leq \exp(\log^{1/9}N)$. By  \cite[Theorem 5.27]{iwaniec2021analytic}
    \begin{align}
        \sum_{\substack{n \leq N' \\ n \equiv a \pmod q}} \Lambda(n) = \frac{N'}{\phi(q)} \Bigl(1 - \chi_{q_{\text{Siegel}}}(a) \mathbf{1}_{q_{\text{Siegel}} | q} \frac{(N')^{\sigma - 1}}{\sigma} \Bigr) \mathbf{1}_{(a, q) = 1} + O(N \exp(-c \log^{1/10}N)).
    \end{align}
    So, by the definition of $\Lambda_{\leq Q}'(n)$ and a rearrangement of the order of summation, it suffices to show that:
    \begin{align} \label{arithmetic-progression-sum-heath-brown}
        \sum_{\substack{n \leq N' \\ n \equiv a \pmod q}} \Lambda_{\leq Q}(n) = \frac{N'}{\phi(q)} \mathbf{1}_{(a, q) = 1} + O(N \exp(-c \log^{1/10} N))
    \end{align}
    and 
    \begin{align} \label{arithmetic-progression-sum-siegel}
        \sum_{\substack{n \leq N' \\ n \equiv a \pmod q}} \Lambda_{\leq Q}(n) n^{\sigma - 1} \chi_{q_{\text{Siegel}}}(n) = \frac{1}{\phi(q)} \chi_{q_{\text{Siegel}}}(a) \mathbf{1}_{q_{\text{Siegel}} | q} \frac{(N')^{\sigma}}{\sigma} \mathbf{1}_{(a, q) = 1} + O(N \exp(-c \log^{1/10}N)).
    \end{align}
    For \eqref{arithmetic-progression-sum-heath-brown}, we expand
    \begin{align}
        \sum_{\substack{n \leq N' \\ n \equiv a \pmod q}} \Lambda_{\leq Q}(n) &= \sum_{\substack{n \leq N' \\ n \equiv a \pmod q}} \sum_{t \leq Q} \frac{\mu(t)}{\phi(t)} \sum_{d  |(t, n)} \mu(\frac{t}{d}) d \\
        &= \sum_{t \leq Q} \frac{\mu(t)}{\phi(t)} \sum_{d  |t} \mu(\frac{t}{d}) d \sum_{\substack{n \leq N' \\ n \equiv a \pmod q \\ d | n}} 1 \\
    \end{align}
    by changing the order of summation and using that 
    \[ c_q(n) = \sum_{d | (q, n)} \mu(\frac{q}{d}) d.\]
    The inner-most sum is 
    \[ \frac{N'}{\text{lcm}(q, d)} \mathbf{1}_{\gcd(d,q) | a} + O(1),\] where the contribution from $O(1)$ \[ \sum_{t \leq Q} \frac{1}{\phi(t)} \sum_{d \mid t } d \lesssim \sum_{t \leq Q} \frac{\sigma_1(t)}{\phi(t)} \lesssim Q^{1+o(1)},   \] where $ \sigma_1(t)$ is the sum of the divisors of $t$. So the contribution of $O(1)$ is at most $O(Q^2)$, which can be absorbed into the error term. Therefore we have reduced \eqref{arithmetic-progression-sum-heath-brown} to the asymptotic:
    \begin{align} \label{arithmetic-progression-sum-common-part}
        \sum_{t \leq Q} \frac{\mu(t)}{\phi(t)} \sum_{d  | t} \mu(\frac{t}{d}) d \frac{N'}{\text{lcm}(q, d)} \mathbf{1}_{\gcd(q,d)  |a} = \frac{N'}{\phi(q)} \mathbf{1}_{(a, q) = 1} + O(N \exp(-c \log^{1/10} N))
    \end{align}
    for any $1 \leq N' \leq N$. Rearranging the least common multiple, we express the left-hand side of above equation as
    \begin{align} \label{arithmetic-progression-sum-common-part2}
        \sum_{t \leq Q} \frac{\mu(t)}{\phi(t)} \sum_{d  | t} \mu(\frac{t}{d}) d \frac{N'}{\text{lcm}(q, d)} \mathbf{1}_{\gcd(q,d) | a} &= \frac{N'}{q} \sum_{t \leq Q}  \frac{\mu(t)}{\phi(t)} \sum_{d  |t} \mu(\frac{t}{d}) \gcd(q, d)  \mathbf{1}_{\gcd(q,d)  |a}.
    \end{align}
    If $t$ does not divide $q$, then as $t$ is squarefree, we see that there must be a prime $r $ dividing $t$ but not $q$. This prime accounts for the vanishing of the inner sum: we can decompose the divisors of $t$ based on whether they are coprime with $r$. Explicitly, for each $d_0|t, \ (d_0,r) = 1$
    \[ \text{gcd}(q,d_0) = \text{gcd}(q,d_0r) \; \; \; \text{ while } \; \; \; \mu(\frac{t}{d_0}) = - \mu(\frac{t}{d_0 r}).\]
    Therefore $t | q$, so also $d | q$, so we may update our expression accordingly:
    \begin{align}
        \frac{N'}{q} \sum_{t \leq Q}  \frac{\mu(t)}{\phi(t)} \sum_{d  |t} \mu(\frac{t}{d}) \gcd(q, d)  \mathbf{1}_{\gcd(q,d)  |a} &= \frac{N'}{q} \sum_{\substack{t \leq Q \\ t | q}}  \frac{\mu(t)}{\phi(t)} \sum_{d  |(t, a)} \mu(\frac{t}{d}) d \\
        &= \frac{N'}{q} \sum_{\substack{t \leq Q \\ t | q}}  \frac{\mu(t)}{\phi(t)} \mu(\frac{t}{(t, a)}) \phi((t, a)),
    \end{align}
where above, we have abbreviated $(t,a) := \text{gcd}(t,a)$, and used the following identity coming from the multiplicative property of M\"{o}bius function and the fact that $t$ is square-free:
\begin{align*}
     \sum_{d  |(t, a)} \mu(\frac{t}{d}) d = \mu(t)\sum_{d  |(t, a)} \mu(d) d = \mu(t)\prod_{p\mid (t,a)} (1 - p ) = \mu(t) \mu((t,a)) \phi((t,a)).
\end{align*}
We now argue that we can remove the restriction, $t \leq Q$: for, if there exists $t > Q$ with $t | q$, then $q \geq Q$, which is impossible since $ q \leq Q^{\frac{1}{2}}.$
    If $p \mid (a,q)$ for some prime $p$, then we can obtain similar cancellation of this sum to \eqref{arithmetic-progression-sum-common-part2} by decomposing the divisors of $q$ based on whether the divisor is coprime to $p$. So we may assume $(a, q) = 1$, and in particular $(a, t) = 1$. Thus,
    \begin{align}
        \sum_{\substack{n \leq N' \\ n \equiv a \pmod q}} \Lambda_{\leq Q}(n) &= \frac{N'}{q} \mathbf{1}_{(a, q) = 1} \sum_{t | q}  \frac{\mu^2(t)}{\phi(t)} + O(Q^2).
    \end{align}
    To obtain \eqref{arithmetic-progression-sum-common-part}, we just apply the identity:
    \begin{align}
        \sum_{t | q} \frac{\mu(t)^2}{\phi(t)} = \frac{q}{\phi(q)}.
    \end{align}
    We now turn to the proof of \eqref{arithmetic-progression-sum-siegel}. 
    From the fundamental theorem of calculus we have:
    \begin{align}
        n^{\sigma - 1} \mathbf{1}_{[N']}(n) = \int_1^{N'} (1 - \sigma) M^{\sigma - 2} \mathbf{1}_{[M]}(n) dM + (N')^{\sigma - 1} \mathbf{1}_{[N']}(n)
    \end{align}
    and
    \begin{align}
        \frac{(N')^\sigma}{\sigma} - \frac{1}{\sigma} + 1 = \int_1^{N'} (1 - \sigma) M^{\sigma - 1} dM + (N')^{\sigma } ,
    \end{align}
    so from triangle inequality it suffices to show that:
    \begin{align} \label{arithmetic-progression-sum-siegel-2}
        \sum_{\substack{n \leq M \\ n \equiv a \pmod q}} \Lambda_{\leq Q}(n) \chi_{q_{\text{Siegel}}}(n) = \frac{M}{\phi(q)} \chi_{q_{\text{Siegel}}}(a) \mathbf{1}_{q_{\text{Siegel}}|q} \mathbf{1}_{(a, q) = 1} + O(N \exp(-c \log^{1/10}N))
    \end{align}
    for all $1 \leq M \leq N$. The left-hand side is given by
    \begin{align}   \label{e;InnerSum}
         \sum_{t \leq Q} \frac{\mu(t)}{\phi(t)} \sum_{d  |t} \mu(\frac{t}{d}) d \sum_{\substack{n \leq M \\ n \equiv a \pmod q \\ d | n}} \chi_{q_{\text{Siegel}}}(n). 
         \end{align}
Note that the sum is empty unless $(d,q)\mid a$.  And, otherwise, it is a sum over a progression of step size $\textup{lcm}(q,d)$. 
We have the formula below delivered to us from \cite[Formula 3.9]{iwaniec}. 
    \begin{align}
 \frac 1  {q_{\text{Siegel}} }
 \sum_{\substack{c \mod q_{\textup{Siegel}} }} \chi_{q_{\text{Siegel}}}(sc+a)
        = \begin{cases}
            \chi_{q_{\text{Siegel}}}(a) &   q_{\text{Siegel}}\mid s 
            \\
            0  &   q_{\text{Siegel}}\nmid s
        \end{cases}
    \end{align}
Apply this with $s= \textup{lcm}(q,d)$. It follows that 
    \begin{align}
        \sum_{\substack{n \leq M \\ n \equiv a \pmod q \\ d | n}} \chi_{q_{\text{Siegel}}}(n)
        = \frac{M}{\textup{lcm}(q,d)} \mathbf{1}_{(d, q)|a}
        \mathbf{1}_{q_{  \textup{Siegel} }\mid \textup{lcm}(q,d) }
        \chi_{q_{\text{Siegel}}}(a_d)+O(q_{\text{Siegel}}). 
    \end{align}
    where $n\equiv a_d\pmod{\textup{lcm}(q,d)}$, if and only if $n\equiv a\pmod q$ and $d|n$. Observe that the main term of the right-hand side vanishes when $q_{\text{Siegel}}$ does not divide $q$. Indeed,  $q_{  \textup{Siegel} }\mid \textup{lcm}(q,d) $, so  one finds a prime $r$ which divides both $d$ and $q_{\text{Siegel}}$. 
    So $n\equiv a_d\pmod{\textup{lcm}(q,d)}$ and  $r\mid n$ imply that $a_d\equiv 0\pmod{r}$. Hence, we infer that $r\mid (q_{\textup{Siegel}},a_d)$, which implies that $\chi_{q_{\textup{Siegel}}}(a_d)=0$.  So we may assume that  $q_{\text{Siegel}}\mid q$. Using this, the fact that $a_d\equiv n\equiv a\pmod{q}$, and periodicity of $\chi_{q_{\textup{Siegel}}}$, we learn that  $\chi_{q_{\textup{Siegel}}}(a_d)= \chi_{q_{\textup{Siegel}}}(a)$.  
    We conclude that the most inner sum in \eqref{e;InnerSum} is just 
    \[ \frac{M \chi_{q_{\text{Siegel}}}(a)}{\text{lcm}(q,d)} \mathbf{1}_{(d, q)|a} \mathbf{1}_{q_{\text{Siegel}} |q} + O(q_{\text{Siegel}}).\] The contribution from $O(q_{\text{Siegel}})$ to the sum \eqref{e;InnerSum} above is at most
    \begin{align}
        \lesssim \sum_{t \leq Q} \frac{1}{\phi(t)} \sum_{d  |t}  d q_{\textup{Siegel}}\lesssim Q^{1/2}\sum_{t \leq Q} \frac{\sigma_1(t)}{\phi(t)} \lesssim Q^{3/2 +o(1)} \lesssim Q^3,
    \end{align}
     which is bounded by 
    \[ O(N \exp(-c \log^{1/10}N)).\]
    Therefore we just need to verify that:
    \begin{align}
        \mathbf{1}_{q_{\text{Siegel}} |q} \chi_{\text{Siegel}}(a) \sum_{t \leq Q} \frac{\mu(t)}{\phi(t)} &\sum_{d  |t} \mu(\frac{t}{d}) d \frac{M}{\text{lcm}(q,d)} \mathbf{1}_{(d, q)|a} \\
        &= \frac{M}{\phi(q)} \chi_{q_{\text{Siegel}}}(a) \mathbf{1}_{q_{\text{Siegel}}|q} \mathbf{1}_{(a, q) = 1} + O(N \exp(-c \log^{1/10}N)),
    \end{align}
    which follows directly from \eqref{arithmetic-progression-sum-common-part}.
\end{proof}

\begin{bibdiv}
	\begin{biblist}
		
		\bib{MR1995517}{book}{
			author={Assani, Idris},
			title={Wiener {W}intner ergodic theorems},
			publisher={World Scientific Publishing Co., Inc., River Edge, NJ},
			date={2003},
			ISBN={981-02-4439-8},
			url={https://doi.org/10.1142/4538},
			review={\MR{1995517}},
		}
		
		\bib{Birkhoff1931}{article}{
			author={Birkhoff, George~D.},
			title={Proof of the ergodic theorem},
			date={1931-12},
			ISSN={1091-6490},
			journal={Proceedings of the National Academy of Sciences},
			volume={17},
			number={12},
			pages={656–660},
			url={http://dx.doi.org/10.1073/pnas.17.2.656},
		}
		
		\bib{MR1037434}{article}{
			author={Bourgain, J.},
			title={Double recurrence and almost sure convergence},
			date={1990},
			ISSN={0075-4102,1435-5345},
			journal={J. Reine Angew. Math.},
			volume={404},
			pages={140\ndash 161},
			url={https://doi.org/10.1515/crll.1990.404.140},
			review={\MR{1037434}},
		}
		
		\bib{MR1019960}{article}{
			author={Bourgain, Jean},
			title={Pointwise ergodic theorems for arithmetic sets},
			date={1989},
			ISSN={0073-8301,1618-1913},
			journal={Inst. Hautes \'Etudes Sci. Publ. Math.},
			number={69},
			pages={5\ndash 45},
			url={http://www.numdam.org/item?id=PMIHES_1989__69__5_0},
			note={With an appendix by the author, Harry Furstenberg, Yitzhak
				Katznelson and Donald S. Ornstein},
			review={\MR{1019960}},
		}
		
		\bib{MR4242902}{article}{
			author={Buczolich, Zolt\'an},
			author={Eisner, Tanja},
			title={Divergence of weighted square averages in {$L^1$}},
			date={2021},
			ISSN={0001-8708,1090-2082},
			journal={Adv. Math.},
			volume={384},
			pages={Paper No. 107727, 19},
			url={https://doi.org/10.1016/j.aim.2021.107727},
			review={\MR{4242902}},
		}
		
		\bib{MR453975}{incollection}{
			author={Conze, Jean-Pierre},
			title={Convergence des moyennes ergodiques pour des sous-suites},
			date={1973},
			booktitle={Contributions au calcul des probabilit\'es},
			series={Suppl\'ement au Bull. Soc. Math. France},
			volume={Tome 101},
			publisher={Soc. Math. France, Paris},
			pages={7\ndash 15},
			url={https://doi.org/10.24033/msmf.113},
			review={\MR{453975}},
		}
		
		\bib{MR2420509}{article}{
			author={Demeter, Ciprian},
			author={Lacey, Michael~T.},
			author={Tao, Terence},
			author={Thiele, Christoph},
			title={Breaking the duality in the return times theorem},
			date={2008},
			ISSN={0012-7094,1547-7398},
			journal={Duke Math. J.},
			volume={143},
			number={2},
			pages={281\ndash 355},
			url={https://doi.org/10.1215/00127094-2008-020},
			review={\MR{2420509}},
		}
		
		\bib{2944094}{article}{
			author={Eisner, Tanja},
			author={Tao, Terence},
			title={Large values of the {G}owers-{H}ost seminorms},
			date={2012},
			ISSN={0021-7670,1565-8538},
			journal={J. Anal. Math.},
			volume={117},
			pages={133\ndash 186},
			url={https://doi.org/10.1007/s11854-012-0018-2},
			review={\MR{2944094}},
		}

\bib{frantzikinakishostmultiplicative}{article}{
	AUTHOR = {Frantzikinakis, Nikos and Host, Bernard},
	TITLE = {Higher order {F}ourier analysis of multiplicative functions
	and applications},
	JOURNAL = {J. Amer. Math. Soc.},
	FJOURNAL = {Journal of the American Mathematical Society},
	VOLUME = {30},
	YEAR = {2017},
	NUMBER = {1},
	PAGES = {67--157},
	ISSN = {0894-0347,1088-6834},
	MRCLASS = {11N37 (05D10 11B30 11N60 37A45)},
	MRNUMBER = {3556289},
	MRREVIEWER = {Vilius\ Stakenas},
	DOI = {10.1090/jams/857},
	URL = {https://doi.org/10.1090/jams/857},
}

    \bib{greentao2008u3}{article}{
	AUTHOR = {Green, Ben and Tao, Terence},
	TITLE = {An inverse theorem for the {G}owers {$U^3(G)$} norm},
	JOURNAL = {Proc. Edinb. Math. Soc. (2)},
	FJOURNAL = {Proceedings of the Edinburgh Mathematical Society. Series II},
	VOLUME = {51},
	YEAR = {2008},
	NUMBER = {1},
	PAGES = {73--153},
	ISSN = {0013-0915,1464-3839},
	MRCLASS = {11B25 (11B75 11P55 11P70)},
	MRNUMBER = {2391635},
	MRREVIEWER = {Serge\u i\ V.\ Konyagin},
	DOI = {10.1017/S0013091505000325},
	URL = {https://doi.org/10.1017/S0013091505000325},
}    
		\bib{MR2651575}{article}{
			author={Green, Ben},
			author={Tao, Terence},
			title={An equivalence between inverse sumset theorems and inverse
				conjectures for the {$U^3$} norm},
			date={2010},
			ISSN={0305-0041,1469-8064},
			journal={Math. Proc. Cambridge Philos. Soc.},
			volume={149},
			number={1},
			pages={1\ndash 19},
			url={https://doi.org/10.1017/S0305004110000186},
			review={\MR{2651575}},
		}

		\bib{MR834356}{article}{
			author={Heath-Brown, D.~R.},
			title={The ternary {G}oldbach problem},
			date={1985},
			ISSN={0213-2230},
			journal={Rev. Mat. Iberoamericana},
			volume={1},
			number={1},
			pages={45\ndash 59},
			url={https://doi.org/10.4171/RMI/2},
			review={\MR{834356}},
		}
		
		\bib{MR2544760}{article}{
			author={Host, Bernard},
			author={Kra, Bryna},
			title={Uniformity seminorms on {$\ell^\infty$} and applications},
			date={2009},
			ISSN={0021-7670,1565-8538},
			journal={J. Anal. Math.},
			volume={108},
			pages={219\ndash 276},
			url={https://doi.org/10.1007/s11854-009-0024-1},
			review={\MR{2544760}},
		}
		
		\bib{iwaniec}{book}{
			author={Iwaniec, Henryk},
			author={Kowalski, Emmanuel},
			title={Analytic number theory},
			series={American Mathematical Society Colloquium Publications},
			publisher={American Mathematical Society, Providence, RI},
			date={2004},
			volume={53},
			ISBN={0-8218-3633-1},
			url={https://doi.org/10.1090/coll/053},
			review={\MR{2061214}},
		}
		
		\bib{iwaniec2021analytic}{book}{
			author={Iwaniec, Henryk},
			author={Kowalski, Emmanuel},
			title={Analytic number theory},
			publisher={American Mathematical Soc.},
			date={2021},
			volume={53},
		}
		
		\bib{krause2025unifiedapproach}{misc}{
			author={Krause, Ben},
			title={A unified approach to two pointwise ergodic theorems: Double
				recurrence and return times},
			date={2025},
			url={https://arxiv.org/abs/2501.06877},
		}
		
		\bib{KMTT}{misc}{
			author={Krause, Ben},
			author={Mousavi, Hamed},
			author={Tao, Terence},
			author={Teräväinen, Joni},
			title={Pointwise convergence of bilinear polynomial averages over the
				primes},
			date={2024},
			url={https://arxiv.org/abs/2409.10510},
		}
		
		\bib{MR2788358}{article}{
			author={LaVictoire, Patrick},
			title={Universally {$L^1$}-bad arithmetic sequences},
			date={2011},
			ISSN={0021-7670,1565-8538},
			journal={J. Anal. Math.},
			volume={113},
			pages={241\ndash 263},
			url={https://doi.org/10.1007/s11854-011-0006-y},
			review={\MR{2788358}},
		}
		
		\bib{leng2023improvedquadratic}{misc}{
			author={Leng, James},
			title={Improved quadratic gowers uniformity for the m\"{o}bius
				function},
			date={2023},
			url={https://arxiv.org/abs/2212.09635},
		}

        \bib{leng2023efficient}{article}{
  title={Efficient equidistribution of nilsequences},
  author={Leng, James},
  journal={arXiv preprint arXiv:2312.10772},
  year={2023}
}

        \bib{leng2024quasipolynomial}{article}{
  title={Quasipolynomial bounds on the inverse theorem for the Gowers $U^{s+1}[N]$-norm},
  author={Leng, James and Sah, Ashwin and Sawhney, Mehtaab },
  journal={arXiv preprint arXiv:2402.17994},
  year={2024}
}

		\bib{MR1257033}{article}{
			author={Lesigne, E.},
			title={Spectre quasi-discret et th\'eor\`eme ergodique de
				{W}iener-{W}intner pour les polyn\^omes},
			date={1993},
			ISSN={0143-3857,1469-4417},
			journal={Ergodic Theory Dynam. Systems},
			volume={13},
			number={4},
			pages={767\ndash 784},
			review={\MR{1257033}},
		}
		
		\bib{MR1268717}{article}{
			author={Rudolph, Daniel~J.},
			title={A joinings proof of {B}ourgain's return time theorem},
			date={1994},
			ISSN={0143-3857,1469-4417},
			journal={Ergodic Theory Dynam. Systems},
			volume={14},
			number={1},
			pages={197\ndash 203},
			url={https://doi.org/10.1017/S014338570000780X},
			review={\MR{1268717}},
		}
		
		\bib{MR1489899}{article}{
			author={Rudolph, Daniel~J.},
			title={Fully generic sequences and a multiple-term return-times
				theorem},
			date={1998},
			ISSN={0020-9910,1432-1297},
			journal={Invent. Math.},
			volume={131},
			number={1},
			pages={199\ndash 228},
			url={https://doi.org/10.1007/s002220050202},
			review={\MR{1489899}},
		}
		
		\bib{MR2994508}{article}{
			author={Sanders, Tom},
			title={On the {B}ogolyubov-{R}uzsa lemma},
			date={2012},
			ISSN={2157-5045,1948-206X},
			journal={Anal. PDE},
			volume={5},
			number={3},
			pages={627\ndash 655},
			url={https://doi.org/10.2140/apde.2012.5.627},
			review={\MR{2994508}},
		}
		
		\bib{tal2025returntimestheoremautocorrelation}{misc}{
			author={Tal, Matan},
			title={The return times theorem and auto-correlation},
			date={2025},
			url={https://arxiv.org/abs/2501.07453},
		}
		
		\bib{MR4875606}{article}{
			author={Tao, Terence},
			author={Ter\"av\"ainen, Joni},
			title={Quantitative bounds for {G}owers uniformity of the {M}\"obius and
				von {M}angoldt functions},
			date={2025},
			ISSN={1435-9855,1435-9863},
			journal={J. Eur. Math. Soc. (JEMS)},
			volume={27},
			number={4},
			pages={1321\ndash 1384},
			url={https://doi.org/10.4171/jems/1404},
			review={\MR{4875606}},
		}

       \bib{vaughan2003hardy}{article}{
    AUTHOR = {Vaughan, R. C.},
     TITLE = {The {H}ardy-{L}ittlewood method},
    SERIES = {Cambridge Tracts in Mathematics},
    VOLUME = {125},
   EDITION = {Second},
 PUBLISHER = {Cambridge University Press, Cambridge},
      YEAR = {1997},
     PAGES = {xiv+232},
      ISBN = {0-521-57347-5},
   MRCLASS = {11P55 (11L15 11P05)},
  MRNUMBER = {1435742},
MRREVIEWER = {D.\ R.\ Heath-Brown},
       DOI = {10.1017/CBO9780511470929},
       URL = {https://doi.org/10.1017/CBO9780511470929},
}
		
		\bib{WW}{article}{
			author={Wiener, Norbert},
			author={Wintner, Aurel},
			title={Harmonic analysis and ergodic theory},
			date={1941},
			ISSN={0002-9327,1080-6377},
			journal={Amer. J. Math.},
			volume={63},
			pages={415\ndash 426},
			url={https://doi.org/10.2307/2371534},
			review={\MR{4098}},
		}
		
		\bib{MR995574}{article}{
			author={Wierdl, M\'at\'e},
			title={Pointwise ergodic theorem along the prime numbers},
			date={1988},
			ISSN={0021-2172},
			journal={Israel J. Math.},
			volume={64},
			number={3},
			pages={315\ndash 336},
			url={https://doi.org/10.1007/BF02882425},
			review={\MR{995574}},
		}
		
	\end{biblist}
\end{bibdiv}

\end{document}